\numberwithin{equation}{section}
\newtheoremstyle{mytheorem}
{10pt}
{10pt}
{\it}
{\parindent}
{\bf}
{.}
{ }
{\thmnumber{#2.~}\thmname{#1}\thmnote{~\rm#3}}
\newtheoremstyle{myremark}
{10pt}
{10pt}
{\rm}
{\parindent}
{\bf}
{}
{ }
{\thmnumber{#2.~}\thmname{#1}\thmnote{~\rm#3}}
\newtheoremstyle{myparagraph}
{10pt}
{10pt}
{\rm}
{\parindent}
{\bf}
{.}
{ }
{\thmnumber{#2.~}\thmname{#1}\thmnote{#3}}
\theoremstyle{mytheorem}
\newtheorem{theorem}[subsubsection]{Theorem}
\newtheorem{definition}[subsubsection]{Definition}
\newtheorem{corollary}[subsubsection]{Corollary}
\newtheorem{proposition}[subsubsection]{Proposition}
\theoremstyle{myremark}
\newtheorem{remark}[subsubsection]{Remark.}
\theoremstyle{myparagraph}
\newtheorem{parag}[subsubsection]{}
\newtheorem*{parag*}{}
    \def\@secnumfont{\sc}
    \def\section{\@startsection%
    {section}
    {1}
    \z@{1.5\linespacing\@plus .2\linespacing}
      {.7\linespacing}
      {\normalfont\sc\centering}}
    \def\subsection{\@startsection{subsection}{2}
    \z@{1.0\linespacing\@plus .2\linespacing}
      {.5\linespacing}
      {\normalfont\it\centering}}
\renewenvironment{proof}[1][\proofname]{\par 
    \pushQED{\qed}%
    \normalfont \topsep10\p@\@plus6\p@\relax 
    \trivlist 
    \item[\hskip\labelsep 
    \bfseries 
    #1\@addpunct{.}]\ignorespaces}
    {\popQED\endtrivlist\@endpefalse} 
\providecommand{\proofname}{Proof}
\newlist{enumeraterem}{enumerate}{1}
\setlist[enumeraterem]{label=(\roman*), leftmargin=0pt, itemsep=3pt, itemindent=30pt}
\newlist{enumeratethm}{enumerate}{1}
\setlist[enumeratethm]{label={\rm(\roman*)}, leftmargin=30pt, itemsep=2pt}
\newcounter{cnstcnt}
\newcommand{\newr}{%
\refstepcounter{cnstcnt}%
\ensuremath{r_{\thecnstcnt}}}
\newcommand{\oldr}[1]{\ensuremath{r_{\ref{#1}}}}
\newcounter{const}
\newcommand{\Haus}{\mathscr{H}}
\newcommand{\Leb}{\mathscr{L}}
\newcommand{\Gr}{\mathrm{Gr}}
\newcommand{\gr}{\mathrm{gr}}
\newcommand{\Span}{\mathrm{span}}
\newcommand{\Tan}{{\rm Tan}}
\newcommand{\bd}{\partial}
\newcommand{\M}{\text{M}}
\newcommand{\Lip}{{\rm Lip}}
\DeclareMathOperator{\trace}{\mbox{\Large$\llcorner$}}
\DeclareMathOperator{\antitrace}{\mbox{\kern-1.5pt\Large$\lrcorner$\kern.5pt}}
\newcommand{\N}{\mathbb{N}}
\newcommand{\Z}{\mathbb{Z}}
\newcommand{\R}{\mathbb{R}}
\begin{document}


\thispagestyle{empty}
~\vskip -1.1 cm

	%
	%

\vspace{1.7 cm}

	%
	%
\begin{center}
{\Large\bf
Frobenius theorem
and fine structure of tangency sets to non-involutive distributions
}\end{center}

\vspace{.7 cm}

	%
	%
{\centering\sc 
Giovanni Alberti, Annalisa Massaccesi, Andrea Merlo
\\
}

\vspace{.5 cm}

	%
	%
{\rightskip 1 cm
\leftskip 1 cm
\parindent 0 pt
\footnotesize
{\sc Abstract.}
In this paper we provide a complete answer to the question whether Frobenius’ Theorem can be generalized to surfaces below the \(C^{1,1}\) threshold. We study the fine structure of the tangency set in terms of involutivity of a given distribution and we highlight a tradeoff behavior between the regularity of a tangent surface and that of the tangency set. First of all, we prove a Frobenius-type result, that is, given a $k$-dimensional surface $S$ of class $C^1$ and a non-involutive $k$-distribution $V$, if $E$ is a Borel set contained in the tangency set $\tau(S,V)$ of $S$ to $V$ and $\mathbb1_E\in W^{s,1}(S)$ with $s>1/2$ then $E$ must be $\Haus^k$-null in $S$. In addition, if \(S\) is locally a graph of a $C^1$ function with gradient in \(W^{\alpha,q}\) and if a Borel set \(E \subset \tau(S,V)\) satisfies \(\mathbb1_E \in W^{s,1}(S)\) with 
\[
s \in \bigl(0,\tfrac12\bigr]\qquad\text{and}\qquad\alpha \;>\; 1 - \Bigl(2 - \tfrac{1}{q}\Bigr) \, s,
\]
then \(\Haus^k(E) = 0\).
We show this exponents' condition to be sharp by constructing, for any \(\alpha < 1 - \bigl(2 - \tfrac{1}{q}\bigr) s\), a surface \(S \) in the same class as above and a set \(E \subset \tau(S,V)\) with \(\mathbb1_E \in W^{s,1}(S)\) and \(\Haus^k(E) > 0\).  
Our methods combine refined fractional Sobolev estimates on rectifiable sets, a Stokes‐type theorem for rough forms on finite‐perimeter sets, and a generalization of the Lusin's Theorem for gradients.

\par
\medskip\noindent
{\sc Keywords:} 
non-involutive distributions, 
Frobenius theorem, Lusin's theorem for gradients.

\par
\medskip\noindent
{\sc 2020 Mathematics Subject Classification: 58A30, 53C17, 58A25, 35R03.} 
\par
}

\tableofcontents

\section{Introduction}\label{sec:intro}

\subsection{Main results}

A $C^1$-regular distribution of $k$-dimensional planes
\[
  V:\Omega\subset\R^{n}\;\longrightarrow\;\Gr(k,n)
\]
is \emph{integrable} if, through every point of~$\Omega$, one can construct a 
$C^2$-immersed, $k$-dimensional submanifold whose tangent space coincides with~$V$.
\medskip

Frobenius’ theorem asserts that this holds when $V$ is
\emph{involutive}, i.e. any two vector fields lying in $V$ have a Lie bracket that still lies in $V$.
In other words, a \emph{geometric} property (existence of integral manifolds)
is equivalent to a purely \emph{algebraic} constraint on the first derivatives of~$V$.
\bigskip

\noindent When the candidate surface is \emph{less} than $C^2$, the rigidity
of Frobenius’ theorem breaks down.  Indeed, the first-named author proved a
Lusin-type theorem in \cite{zbMATH00021945}, showing that there exist
$C^1$-regular surfaces tangent to a non-involutive distribution on a set of
positive measure. Z.~Balogh refined this in \cite{Balogh2003SizeGradient}, constructing
such surfaces to be of class 
\(\bigcap_{0<\alpha<1}C^{1,\alpha}\).  Note that in both Alberti's and Balogh's
examples the distribution $V$ must satisfy additional geometric invariance
properties, i.e. a vertical invariance of the distribution. A prototypical example is the horizontal distribution in Heisenberg groups; see \cite{Balogh2011SizeOT, Balogh2003SizeGradient}. In the case of the first Heisenberg group such distribution is spanned by the two vector fields in \eqref{distroH1}.

Without further hypotheses on the tangency set, this 
\(\bigcap_{0<\alpha<1}C^{1,\alpha}\) regularity is sharp: indeed if the surface were
$C^{1,1}$, then Frobenius together with Lusin’s theorem forces the tangency set
to have measure zero.

\medskip

\noindent These counterexamples demonstrate that the question of when a surface is tangent to a distribution is far from settled, and they have deep and far-reaching consequences in analysis on metric spaces. A prominent example arises in the theory of  Carnot-Carath\'eodory spaces. A Carnot-Carath\'eodory space is $\R^n$ equipped with a
distribution $V$ and the associated distance \(d_{cc}\), defined as the infimum
of lengths of paths tangent to $V$.  By Chow's theorem \cite[p.95, \S 0.4]{Gromov1996Carnot-CaratheodoryWithin}, in order to 
ensure that $d_{cc}$ is finite and defines a genuine distance  on $\R^n$ it is sufficient to have the H\"ormander condition, that asserts that the iterated Lie brackets of tangent vector fields to $V$ must span $\R^n$. 
Geometric Measure
Theory in these spaces is very active; see for instance \cite{MagnaniJEMS,MagnaniUnrect,ambled,Serapioni2001RectifiabilityGroup,FranchiSerapioni16,Merlo_2021, antonelli2023carnot,antonelli2020rectifiable2,Vittone20,HAL,antonelli2020rectifiableA,antonelli2020rectifiableB, MarstrandMattila20, kirchhserra,LeDonneLiRajala, ALD, ALDNG}. It is possible to prove that Lipschitz images of compact sets into Carnot-Carath\'eodory spaces are everywhere tangent
to $V$, and in fact inherit a $C^{1,1}$ structure.  For precise statements and
proofs, see \cite{alberti2025tangencysetsnoninvolutivedistributions}.  In
particular, in these spaces there are no rectifiable sets of dimension exceeding $\dim V$, and
the structure of the tangency set of a $C^{1,1}$ surface characterizes which
rectifiable sets occur in Carnot-Carath\'eodory spaces.
\bigskip

\noindent In the construction of the counterexamples given by the first-named author and Z. Balogh the set on which the $C^1$ surface is tangent to the given distribution is a fat fractal set. One might wonder if this irregularity must always manifest. In this direction, 
S.~Delladio proved in \cite{Del1,del2} that the tangency set of
any $C^1$-regular, $k$-dimensional surface to a non-involutive $C^1$
$k$-planes distribution cannot contain any finite-perimeter set.  Thus, even
for $C^1$ surfaces, the tangency set must have “many holes,” revealing a
trade-off between surface regularity and the regularity (of the boundary) of its tangency set.
\bigskip

\noindent From the above discussion, two questions arise naturally:
\begin{itemize}
  \item[(i)] Can one quantify a precise trade-off between the regularity of a
    surface and the irregularity of its tangency set to a non-involutive distribution?
  \item[(ii)] Does a Frobenius-type theorem hold for weaker objects, such as
    currents? If so, in what sense?
\end{itemize}
In this paper we resolve question~(i) and refine question~(ii), which
will be the subject of future research. The relationship between involutivity and the geometric structure of the boundary of a normal current has been in investigated in \cite{alberti2020geometric}.

\medskip

\noindent First of all, let us discuss the structure of tangency sets in the regimes in which Frobenius’ theorem
\emph{does} hold, namely for $C^2$ and $C^{1,1}$ submanifolds. In
\cite{alberti2025tangencysetsnoninvolutivedistributions}, the present authors
investigated the structure of tangency sets and their relation to different
degrees of non-involutivity of $V$ when the surfaces lie in these regularity
classes.
Recall that a $C^1$ $k$-plane distribution $V$ on $\R^n$ is called \emph{$h$-non-involutive},
with $2\le h\le k$, if every $C^1$ $h$-dimensional subdistribution
$W\subset V$ fails to be involutive. 
Under this definition, we obtained the following

\begin{theorem}[{\cite[Theorems 1.1.1, 1.1.2]{alberti2020geometric}}]\label{th:main1.intro}
  Let $2\le h\le k<n$ and let $V$ be an $h$-non-involutive $C^1$ $k$-plane
  distribution on $\R^n$.  If $S\subset\R^n$ is a $C^2$ submanifold of
  dimension $k$, then the set
  \[
    \tau(S,V):=\{\,q\in S : \Tan(S,q)=V(q)\},
  \]
  can be covered by countably many $(h-1)$-dimensional Lipschitz graphs.
If, on the other hand, $S$ is only $C^{1,1}$ of dimension $k$, then
  $\tau(S,V)$ is $h$-purely unrectifiable.
\end{theorem}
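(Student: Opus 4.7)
Work locally and parametrize $S$ as a graph $\{y = u(x)\}$ with $x \in \R^{k}$, $y \in \R^{n-k}$, and $u$ of the relevant regularity ($C^{2}$ in Part~(i), $C^{1,1}$ in Part~(ii)). After a linear change of coordinates, $V$ is spanned near the base point by the $C^{1}$ vector fields $X_{i} = \partial_{x_{i}} + \sum_{j} A^{j}_{i}(x,y)\,\partial_{y_{j}}$, and the condition $x \in \tau(S,V)$ becomes $\nabla u(x) = A(x, u(x))$. Computing $[X_{i}, X_{l}]$ modulo $V$ identifies the non-involutivity of $V$ with the non-vanishing of the $\R^{n-k}$-valued antisymmetric tensor
\[
  T^{j}_{il}(x,y) \;:=\; \partial_{x_{l}} A^{j}_{i} + A^{m}_{l}\,\partial_{y_{m}} A^{j}_{i} \;-\; \partial_{x_{i}} A^{j}_{l} - A^{m}_{i}\,\partial_{y_{m}} A^{j}_{l};
\]
a standard argument (selecting flat subdistributions) translates $h$-non-involutivity into the algebraic statement that, at every $(x,y)$, there is no $h$-dimensional subspace of $\R^{k}$ on which all the forms $T^{j}$ vanish simultaneously.

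\emph{Part (i): $C^{2}$ case.} For each pair $(i,j)$ the function $f^{j}_{i}(x) := \partial_{i} u^{j}(x) - A^{j}_{i}(x, u(x))$ is $C^{1}$ and vanishes on $E := \tau(S,V)$; hence $\partial_{v} f^{j}_{i}(x_{0}) = 0$ for every $v$ in the approximate tangent space $T^{\rm ap}_{x_{0}} E$ at any $x_{0} \in E$ admitting one, since a $C^{1}$ function vanishing on a set with an approximate tangent direction has null derivative in that direction. Combining these identities with the Schwarz symmetry of $D^{2} u$ and contracting a second time against $w \in T^{\rm ap}_{x_{0}} E$ yields
\[
  \sum_{i, l} v^{l} w^{i}\, T^{j}_{il}(x_{0}, u(x_{0})) = 0 \qquad \text{for all } j.
\]
If $\dim T^{\rm ap}_{x_{0}} E \ge h$, this contradicts the algebraic reformulation of $h$-non-involutivity at $(x_{0}, u(x_{0}))$, so at $\Haus^{k}$-a.e.\ $x_{0} \in E$ the approximate tangent has dimension at most $h-1$. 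A selection/covering argument — partitioning $E$ into countably many pieces on which the approximate tangent bundle is uniformly transverse to a fixed $(k-h+1)$-plane — then produces the covering of $E$ by countably many $(h-1)$-dimensional Lipschitz graphs.

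\emph{Part (ii): $C^{1,1}$ case.} Now $\nabla u$ is Lipschitz and, by Rademacher, differentiable only $\Leb^{k}$-a.e., possibly missing $E$ entirely. Suppose by contradiction that $E$ is not $h$-purely unrectifiable, and pick an $h$-rectifiable $R$ with $\Haus^{h}(E \cap R) > 0$. At $\Haus^{h}$-a.e.\ $x_{0} \in E \cap R$ the set $R$ admits an $h$-dimensional approximate tangent $T_{x_{0}} R$, and the Lipschitz map $\nabla u$ admits a tangential differential along $R$ at $x_{0}$ (a combination of Rademacher's theorem with the structure theory of Lipschitz maps on rectifiable sets). The same symmetry computation as in Part~(i), carried out along directions in $T_{x_{0}} R$, then yields $T^{j}(\cdot, \cdot)\big|_{T_{x_{0}} R} \equiv 0$ at $(x_{0}, u(x_{0}))$ for all $j$, contradicting $h$-non-involutivity.

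\emph{Main obstacle.} In Part~(i) the delicate step is not the identity on $T$ itself but the conversion of the pointwise bound $\dim T^{\rm ap}_{x_{0}} E \le h-1$ into an actual Lipschitz-graph covering: this requires a careful Besicovitch/Whitney-type decomposition that localizes the direction of the approximate tangent bundle together with an extension argument. In Part~(ii) the key technical obstacle is to establish a tangential derivative of $\nabla u$ at $\Haus^{h}$-a.e.\ point of the rectifiable subset $R \subset E$ despite the possibility that $\Leb^{k}(E) = 0$, for which one must exploit the fact that the restriction of a Lipschitz map to a rectifiable set is itself differentiable in the intrinsic sense $\Haus^{h}$-a.e.
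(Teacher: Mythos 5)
This theorem is cited from \cite{alberti2020geometric} and is not proved in the present paper, so there is no internal proof to compare against; the closest analogue in the text is the Whitney-extension-plus-Schwarz argument in \S\ref{totangencytoDPE} that establishes the weaker conclusion $\Haus^k(\tau(S,V))=0$. Judged on its own merits, the proposal contains the right reductions (graphical parametrization, the PDE $\nabla u = A(x,u(x))$, and the curvature tensor $T$), but both parts have genuine gaps.

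\emph{Part (i).} You argue that for $\Haus^k$-a.e. $x_0\in E$ the approximate tangent of $E$ has dimension $\le h-1$, and then invoke an unspecified covering argument. This is circular and, in fact, vacuous: the set $E$ already has $\Leb^k$-measure zero (this follows from the classical Frobenius theorem, or from the $C^{1,1}$ argument in \S\ref{totangencytoDPE}), so ``$\Haus^k$-a.e.\ on $E$'' says nothing, and approximate tangents of $E$ are not defined a priori in any useful measure. The correct tool here avoids a.e.\ considerations entirely: since $u$ is $C^2$, the map $g:=(f^j_i)_{i,j}$ is $C^1$, and the very commutator computation you wrote shows that for \emph{every} $x_0\in E$ the kernel of $Dg(x_0)$ has dimension $\le h-1$ (an $h$-dimensional kernel would give $v,w$ with $T^j(v,w)=0$ for all $j$, contradicting $h$-non-involutivity). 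Hence $\operatorname{rank} Dg(x_0)\ge k-h+1$ everywhere on $E$, and the $C^1$ implicit function theorem places $E$ locally inside an $(h-1)$-dimensional $C^1$ submanifold. A countable subcover then finishes. Your detour through approximate tangents both loses the ``everywhere'' quantifier and still requires the very rectifiability you are trying to prove.

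\emph{Part (ii).} The logic is sound up to the point where you say ``the same symmetry computation as in Part (i), carried out along directions in $T_{x_0}R$, then yields $T^j|_{T_{x_0}R}\equiv 0$.'' Tangential Rademacher gives that $\nabla u|_R$ is tangentially differentiable $\Haus^h$-a.e., but the cancellation you need is a Schwarz-type identity
\[
\textstyle\sum_i w^i\, D^R_v(\partial_i u^j)(x_0) \;=\; \sum_l v^l\, D^R_w(\partial_l u^j)(x_0),\qquad v,w\in T_{x_0}R,
\]
which is not supplied by Rademacher. Alexandrov's theorem, the natural source of second-order symmetry for $C^{1,1}$ functions, is an $\Leb^k$-a.e.\ statement and is unavailable here precisely because $\Leb^k(E)=0$. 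Making this work requires, for instance, covering $R$ by $C^2$ submanifolds $M_m$, applying intrinsic second-order differentiability to $u|_{M_m}$, and then relating the intrinsic Hessian back to the tangential derivative of the ambient $\nabla u$ via the second fundamental form of $M_m$. You flag this as the ``key technical obstacle'' but offer no route through it, and without it the contradiction does not close.
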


Although these results exclude tangencies with any $h$-dimensional
Lipschitz surface, they leave open how large \(\tau(S,V)\) can be in terms of
Hausdorff dimension in the $C^{1,1}$ case.  The first contribution of this paper is to settle this problem.

\begin{theorem}\label{teoremaC1,1}
  Let $2\le k<n$ and let $V$ be a $C^1$ $k$-plane distribution on $\R^n$.  Then
  for every $d<k$, there exists a $C^{1,1}$ submanifold
  $S\subset\R^n$ of dimension $k$ such that
  \[
    \dim_{\Haus}\bigl(\tau(S,V)\bigr)=d.
  \]
\end{theorem}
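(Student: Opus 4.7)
\emph{Local reduction.} The plan is to construct $S$ explicitly in a neighborhood of a chosen point $p_0 \in \R^n$. First, I would fix $p_0$ and choose coordinates $(x,y) \in \R^k \times \R^{n-k}$ so that $V(p_0) = \R^k \times \{0\}$. In a small neighborhood $U$, the distribution $V$ is then a graph, $V(q) = \{(v, A(q) v) : v \in \R^k\}$, for a $C^1$ map $A\colon U \to \R^{(n-k)\times k}$ with $A(p_0) = 0$. A graph $S_f = \{(x, f(x))\}$ is tangent to $V$ at $(x, f(x))$ if and only if $\nabla f(x) = A(x, f(x))$. Hence the task reduces to producing $f \in C^{1,1}(U;\R^{n-k})$ whose contact set $\tau_f := \{x \in U : \nabla f(x) = A(x, f(x))\}$ has Hausdorff dimension exactly $d$.

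\emph{Cantor target and iterative construction.} For $d \in [0,k)$ I would take $K \subset U$ to be a self-similar Cantor set of Hausdorff dimension $d$: $K = \bigcap_n K_n$, where $K_n$ is a disjoint union of $N^n$ axis-aligned cubes of sidelength $\sigma^n$ with $N \sigma^d = 1$. The assumption $d < k$ becomes the sparsity $N \sigma^k < 1$, which is the geometric freedom the construction exploits. The heart of the argument is to produce $f \in C^{1,1}$ with $\nabla f(p) = A(p, f(p))$ for every $p \in K$. I would build $f$ iteratively: start from $f_0 \equiv 0$ and, at each stage $n$, add compactly supported bumps $\psi_{n,Q}$, one per generation-$n$ cube $Q$, satisfying
\[
\|\psi_{n,Q}\|_{C^0} \lesssim \sigma^{2n}, \quad \|\nabla \psi_{n,Q}\|_{C^0} \lesssim \sigma^n, \quad \|\nabla^2 \psi_{n,Q}\|_{C^0} \lesssim 1,
\]
chosen so that $\nabla f_{n+1}$ matches $A(\cdot, f_{n+1})$ at the generation-$(n+1)$ Cantor centres inside $Q$. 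Equivalently, the task is a Whitney jet-extension problem on $K$ with prescribed $1$-jet $(f(p), A(p, f(p)))$, where the crucial compatibility is the quadratic Whitney condition
\[
\bigl| f(p) - f(q) - A(q, f(q)) \cdot (p - q) \bigr| \lesssim |p - q|^2 \qquad \text{for all } p, q \in K,
\]
a holonomy-type obstruction stemming from the possible non-involutivity of $V$.

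\emph{Upper bound and main obstacle.} By construction $K \subset \tau_f$, giving $\dim_{\Haus}(\tau_f) \ge d$. For the reverse inequality, I would add to $f$ a generic $C^{1,1}$ perturbation supported in $U \setminus K$ designed so that $\nabla f - A(\cdot, f)$ does not vanish off $K$; then $\tau_f = K$ and $\dim_{\Haus}(\tau_f) = d$. The hardest step is the Whitney/Lusin construction itself. A naive single-bump-per-cube construction does not deliver $C^{1,1}$ regularity, because at every point of $K$ infinitely many bump supports overlap and the Hessians add up. The sparsity $d < k$ must be exploited carefully---either through a Whitney extension with a holonomy bound, showing that $f|_K$ can be defined by integrating $A$ along paths enclosing regions of area $O(|p-q|^2)$, or through a wavelet-type bump construction with enough vanishing moments to make the total Hessian contribution summable at each point of $K$. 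This is the technical core of the argument, and is precisely where the condition $d < k$ is used.
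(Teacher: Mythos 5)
Your overall strategy — local reduction to a graph equation $\nabla f = A(\cdot,f)$, construction of a self-similar Cantor set $K$ of dimension $d$, and an iterative single-bump-per-cube Lusin-type scheme — is essentially the same as the paper's (Proposition~\ref{prop5.3} combined with \S\ref{Caso2}, which feeds into Theorem~\ref{theoremlusin2}). The scalings you write down for the bumps are also essentially the right ones. However, the one place you identify as the "technical core" is based on a misconception: you claim the naive construction cannot be $C^{1,1}$ because "at every point of $K$ infinitely many bump supports overlap and the Hessians add up," and you then reach for a Whitney/holonomy argument or a wavelet construction with vanishing moments. Neither is needed. Each bump in the paper's construction is a cutoff $\sigma_Q^\iota$ times an affine map, so its second derivative is supported in the thin annular shell around the cube $Q$ where the cutoff transitions from $1$ to $0$. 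Because each generation's cubes (together with their enlarged cutoff supports) sit strictly inside the interior of the parent cube — where $u_{\iota-1}$ is exactly affine — these shells are pairwise disjoint across generations and across siblings. Hence at every point the second derivative is contributed by at most one bump; Hessians never accumulate, and the whole question reduces to bounding a \emph{single} generation's Hessian, which is $\lesssim \rho_\iota^{-2}r_\iota\cdot\lvert a_Q^\iota - a_{\mathfrak f(Q)}^{\iota-1}\rvert \lesssim \rho_\iota^{-2}r_\iota^2$. The hypothesis $d<k$ enters precisely here: with the geometric gap choice $\rho_\iota := \delta(1-\lambda)\lambda^\iota 2^{-B\iota}$ and $\lambda = 2^{-B(k-d)/d}\in(0,1)$, the ratio $\rho_\iota^{-1}r_\iota$ is constant in $\iota$, and the Hessian bound is uniform; the gradient increments $\rho_{\iota+1}^{-1}r_\iota^2$ are geometrically summable, giving $C^1$ convergence. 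Your intuition about "sparsity" and the role of $d<k$ is in the right direction, but the mechanism is the disjointness of the Hessian supports plus a uniform bound, not summability of overlapping Hessians.

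One point where your proposal is actually more careful than the paper's exposition: you note that to get $\dim_{\Haus}(\tau(S,V)) = d$ (and not just $\ge d$) one should arrange $\nabla f \neq A(\cdot,f)$ off $K$, e.g. by a generic $C^{1,1}$ perturbation supported away from $K$. The paper's Theorem~\ref{theoremlusin2} and its proof establish only the inclusion $\mathfrak C \subseteq \tau(S,V)$ with $\dim_{\Haus}\mathfrak C = d$ (and of course $\Haus^k(\tau(S,V))=0$ by Corollary~\ref{cor:Bal}), but do not explicitly rule out extra tangencies off $\mathfrak C$ that could raise the dimension; so your additional step is a sensible supplement, not a deviation.
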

\medskip

\noindent This shows that lowering the surface regularity from $C^2$ to
$C^{1,1}$ allows the tangency set to attain any Hausdorff dimension below
$k$, while remaining $\Haus^k$-null by classical Frobenius’s theorem. This result extends \cite[Proposition 8.2(1)]{Balogh2011SizeOT} by removing the geometric constraints on the distributions $V$.


\medskip

Now that we have completed a fine analysis for tangency sets in the regimes of smoothness where Frobenius' theorem holds, we pass to dig deeper in the cases in which the regularity is strictly below $C^{1,1}$, where essentially we do not assume any second-order regularity. As anticipated above, we should expect a trade-off phenomenon to emerge between the regularity of the surface and that of tangency sets in order for Frobenius-type theorems to hold.
To state our results, we need a notion of regular subsets of rectifiable sets. This class is introduced in \S\ref{parag:def:sobolev:surface} and generalizes the standard definition of Sobolev–Slobodeckij functions on $\R^n$ to general rectifiable sets. It is worth noting that, as with the standard definition of finite‐perimeter sets in arbitrary open sets, our definitions do not detect any irregularity of the boundary of the rectifiable set itself. With this definition we can obtain the following striking Frobenius-type result.

\begin{theorem}
    Let $V$ be a $k$-dimensional distribution in $\mathbb{R}^n$ of class $C^1$. Suppose $S$ is a $k$-dimensional rectifiable set. Suppose 
$$E \subseteq \tau(S, V)\cap N(V),$$ 
is a Borel set, where $\tau(S, V)$ denotes the tangency set defined in \S\ref{def:tan} and $N(V)$ denotes the non-involutivity set of the distribution $V$, see \S\ref{def:inv}. If the characteristic function $\mathbb{1}_E$ belongs to $W^{s, 1}(S)$ with $s>1/2$, where  the space $W^{s,1}(S)$ was introduced in  \S\ref{parag:def:sobolev:surface}, then $\mathcal{H}^k(E) = 0$.
\end{theorem}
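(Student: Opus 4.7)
The plan is to argue by contradiction: if $\mathcal H^k(E)>0$, I will construct a quantity which is simultaneously positive (by non-involutivity of $V$) and zero (by a fractional Stokes-type identity, where the threshold $s>1/2$ enters). By rectifiability of $S$ and a standard decomposition, it suffices to work in a small chart where $S$ is a $C^1$ graph and $V=\ker(\omega_1,\dots,\omega_{n-k})$ for $C^1$ 1-forms $\omega_i$. Fix $p_0\in E\cap N(V)$: by the definition of the non-involutivity set, $(d\omega_i)|_{V(p_0)}\neq 0$ for some $i$, so I can choose $\omega:=\omega_i$ together with a smooth $(k-2)$-form $\alpha$ such that the $k$-form $\eta:=d\omega\wedge\alpha$ satisfies $\eta|_{V(p_0)}\neq 0$. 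By continuity, $\eta|_V\geq c>0$ on a neighborhood $U$ of $p_0$, and it is enough to prove $\mathcal H^k(E\cap U)=0$.

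Set $\beta:=\omega\wedge\alpha$, a $(k-1)$-form in $\mathbb R^n$, and denote by $\widetilde\beta$ its pullback to $S$ (its restriction to the approximate tangent planes). Since $T_p S=V(p)=\ker\omega(p)$ for $p\in E$, the pullback $\widetilde\beta$ vanishes on $E$; likewise the pullback of $\omega\wedge d\alpha$ vanishes on $E$. Consequently $\widetilde{d\beta}=\eta|_V\geq c$ on $E\cap U$, so
\[
  \int_{E\cap U}\widetilde{d\beta}\;\geq\;c\,\mathcal H^k(E\cap U),
\]
and the entire task is to show the left-hand side equals zero. This is where the Stokes-type theorem for rough forms (advertised in the abstract) enters: for $u=\mathbb{1}_E\in W^{s,1}(S)$ with $s>1/2$ and the continuous $(k-1)$-form $\phi=\widetilde\beta$ vanishing on $E$, I would establish the identity $\int_E d\phi=0$. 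The natural route is to mollify $\mathbb{1}_E$ into $u_\epsilon$ (using a cutoff to make everything compactly supported), apply the ordinary Stokes identity
\[
  \int_S u_\epsilon\,d\widetilde\beta+\int_S du_\epsilon\wedge\widetilde\beta=0,
\]
and pass to the limit: the first term converges to $\int_E d\widetilde\beta$, and everything hinges on showing the second term tends to zero.

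The main obstacle is precisely this last limit. The Sobolev mollification estimate gives $\|du_\epsilon\|_{L^1(S)}\lesssim\epsilon^{s-1}[\mathbb{1}_E]_{W^{s,1}(S)}$, and the support of $du_\epsilon$ lies within distance $\epsilon$ of $E$, where $\widetilde\beta$ is small by continuity. A crude bound $|\widetilde\beta|\leq\omega_{\widetilde\beta}(\epsilon)$ only yields $\bigl|\int_S du_\epsilon\wedge\widetilde\beta\bigr|\lesssim\omega_{\widetilde\beta}(\epsilon)\,\epsilon^{s-1}$, which requires an effective Hölder bound $\omega_{\widetilde\beta}(\epsilon)=o(\epsilon^{1-s})$ to close. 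Since $S$ is only $C^1$, the approximate tangent is merely continuous and $\widetilde\beta$ inherits no such modulus, so this naive estimate fails. Overcoming it requires exploiting the ambient $C^1$ regularity of $\beta$ (which gives a Lipschitz bound $|\beta(y)-\beta(p)|\leq C|y-p|$ in $\mathbb R^n$) together with the full nonlocal structure of the $W^{s,1}(S)$ seminorm, rather than merely its mollified $L^1$-gradient bound. Rewriting the error essentially as the double integral
\[
\iint\frac{(\mathbb{1}_E(x)-\mathbb{1}_E(y))\,(\widetilde\beta(x)-\widetilde\beta(y))}{|x-y|^{k+s+1}}\,dx\,dy
\]
and noting that its integrand is nonzero only when exactly one of $x,y$ lies in $E$, so that $\widetilde\beta(x)-\widetilde\beta(y)$ reduces to $\pm\widetilde\beta(y)$ with $|\widetilde\beta(y)|\leq C|y-x|+\text{(tangent-angle term)}$, should produce the required cancellation. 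The sharp threshold $s>1/2$ is expected to emerge from dimensional balance in this bilinear estimate, echoing the critical trace exponent in $L^1$-based fractional Sobolev theory, with a Lusin-type reduction handling the exceptional set where the approximate tangent of $S$ is irregular.
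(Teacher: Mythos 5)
Your reduction is sound in spirit and parallels the paper's: pass to a chart, encode non-involutivity as the non-vanishing of some exterior derivative, observe that the corresponding form vanishes on $E$ because of tangency, and aim to show that its exterior derivative then vanishes a.e.\ on $E$, contradicting non-involutivity. The paper does this more explicitly via the matrix field $M$ satisfying $D\varphi=M(\varphi)$ on $\tau(S,V)$ and Proposition~\ref{Non-inv}, then slices to a two-dimensional problem with a $1$-form via Proposition~\ref{restrictionW1,p}, but this is cosmetically, not substantively, different from your $(k{-}1)$-form $\beta=\omega\wedge\alpha$ setup.

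The genuine gap is exactly where you flag it, and your proposed fix does not close it. The mollification route forces you to control $\bigl|\int_S du_\epsilon\wedge\widetilde\beta\bigr|\lesssim\omega_{\widetilde\beta}(\epsilon)\,\epsilon^{s-1}$, and you correctly note this requires $\omega_{\widetilde\beta}(\epsilon)=o(\epsilon^{1-s})$, i.e.\ a quantitative modulus of continuity for the pullback $\widetilde\beta$. But $\widetilde\beta(y)=\beta(y)|_{T_yS}$, and the decomposition you write, $|\widetilde\beta(y)|\leq C|y-x|+\text{(tangent-angle term)}$ for $x\in E$, leaves a tangent-angle term controlled only by $|T_yS-T_xS|$, which for a merely $C^1$ surface has no rate. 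The Lusin-type reduction you invoke does not rescue this: after passing to a compact $K$ on which $x\mapsto T_xS$ has a chosen modulus, you have changed the set on which you are integrating, and neither $\mathbb{1}_{E\cap K}\in W^{s,1}$ nor the Stokes identity on $K$ follows; the fractional seminorm does not behave well under intersection with an arbitrary Lusin set. The bilinear double-integral rewriting is also not a consequence of the mollified Stokes identity in the $L^1$-based $W^{s,1}$ setting, so even the form of the error term you propose to estimate is not established.

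What actually makes the threshold $s>1/2$ work, and what your argument is missing, is the \emph{super-density} of $E$ rather than any smallness of $\widetilde\beta$ near $E$. The paper derives from Dorronsoro's theorem (Proposition~\ref{dorronsorosuper-density}) that $\mathbb{1}_E\in W^{s,1}$ forces $\Leb^n(B(x,\rho)\setminus E)=o(\rho^{n+b/(1-s)})$ for a.e.\ $x\in E$ and any $b<s$. Combining this with slicing (Proposition~\ref{restrictionW1,p}) and an Egorov selection (Proposition~\ref{prop:rettangoli}), at a.e.\ $x\in E$ one builds rectangles $\mathscr{P}_i$ of scale $r_i$, aligned with two of the sliced directions, with $\Haus^1(\partial\mathscr{P}_i\setminus E)\lesssim\varepsilon\,r_i^{1+s/(1-s)}$. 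On these rectangles the rough-form Stokes theorem (Proposition~\ref{divergencethm}) applies, and since $g=\mathfrak{m}-g$-type form vanishes on the portion of $\partial\mathscr{P}_i$ lying in $E$, only the tiny complement contributes; bounding the form there by its mere $\sup$-norm already gives $|\mathrm{d}g(x)|\lesssim\varepsilon\,r_i^{-1+s/(1-s)}\|g\|_\infty\to 0$ precisely when $s>1/2$. So no modulus of continuity of the pulled-back form is used at all in the $s>1/2$ regime; the entire gain is geometric, coming from the super-density of $E$. Your approach relies on a modulus that does not exist, and the mechanism that actually produces the sharp threshold is not present in the proposal.
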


This theorem shows that Delladio's result can be pushed further. Not only tangency sets cannot be of finite perimeter inside the surface, but they cannot even have finite $1/2$-perimeter. Below the threshold of $1/2$-regularity, as we should see below, the situation is more complicated.  In what follows we say that a $k$-dimensional rectifiable set is of class $Y^{1+\alpha,q}$ if it can be covered with countably many $C^1$ graphs whose gradients are of class $W^{\alpha,q}$. See Definition \ref{Yreg} for a precise definition. With this definition we are able to obtain the following extension of Frobenius' theorem.

\begin{theorem}\label{main:intro:frobenius}
Let $V$ be a $k$-dimensional distribution in $\mathbb{R}^n$ of class $C^1$, and let $ q \in [1, \infty]$, $\alpha \in (0,1)$, $s\in (0,1/2]$ be such that 
\[
\alpha > 1 - \left(2 - \frac{1}{q} \right) s.
\]
Suppose $S$ is a $k$-dimensional $Y^{1+\alpha, q}$-rectifiable set, see \S\ref{def:rectifiable} and \S\ref{Yreg} for a formal definitions, and let 
$$E \subseteq \tau(S, V)\cap N(V),$$ 
be a Borel set, where $\tau(S, V)$ denotes the tangency set defined in \S\ref{def:tan} and $N(V)$ denotes the non-involutivity set of the distribution $V$, see \S\ref{def:inv}. If the characteristic function $\mathbb{1}_E$ belongs to $W^{s, 1}(S)$, where  the space $W^{s,1}(S)$ was introduced in  \S\ref{parag:def:sobolev:surface}, then $\mathcal{H}^k(E) = 0$.
\end{theorem}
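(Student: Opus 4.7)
The approach is by contradiction: assume $\Haus^k(E) > 0$. By $Y^{1+\alpha,q}$-rectifiability and standard localization, it suffices to handle the case where $S$ is a single $C^1$ graph $F(x) = (x, u(x))$ with $u : U \to \R^{n-k}$ and $\nabla u \in W^{\alpha, q}(U)$, and $E' := F^{-1}(E) \subset U$ satisfies $\mathbb{1}_{E'} \in W^{s,1}(U)$ together with $\Leb^k(E') > 0$. Non-involutivity of $V$ on $N(V)$ yields, locally on $\R^n$, a smooth $(k-1)$-form $\eta$ that vanishes as a $(k-1)$-covector on every $V(p)$, $p \in N(V)$, but whose exterior derivative $d\eta$ restricts to a strictly positive $k$-covector on $V(p)$. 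Pulling back to $U$ via $F$, the form $F^*\eta$ vanishes pointwise on $E'$, since $T_{F(x)}S = V(F(x))$ there, while $F^*(d\eta) = g(x)\, dx$ with $g \ge c > 0$ on $E'$ after further localization.

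The plan is to compute the quantity
\[
I := \int_U \mathbb{1}_{E'}(x)\, g(x)\, dx
\]
in two ways. The lower bound $I \ge c\,\Leb^k(E') > 0$ is immediate. The upper bound is obtained formally by Stokes: since $g\, dx = d(F^*\eta)$, integration by parts expresses $I$ as a duality pairing $\langle \nabla \mathbb{1}_{E'}, F^*\eta\rangle$ of the distributional gradient of $\mathbb{1}_{E'}$ against the $(k-1)$-form $F^*\eta$, which vanishes pointwise on $E'$ and inherits $W^{\alpha,q}$ regularity from $\nabla u$. The main obstacle is to make this pairing rigorous and to exploit quantitatively the pointwise vanishing on $E'$. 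I would combine the three ingredients announced in the abstract: a Gagliardo-type bilinear estimate on the rectifiable set $S$ that exploits both the vanishing of $F^*\eta$ on $E'$ and the $W^{\alpha,q}$ regularity of $\nabla u$; a Lusin-type theorem for gradients decomposing $\nabla u$ into a Lipschitz piece outside a set whose measure is controlled by $[\nabla u]_{W^{\alpha,q}}$, so that classical Stokes applies on the large good set; and a Stokes-type theorem for $W^{\alpha,q}$-regular forms on finite-perimeter subsets of $U$, applied after approximating $E'$ by finite-perimeter sets obtained as superlevel sets of mollifications of $\mathbb{1}_{E'}$, with the approximation error controlled by $[\mathbb{1}_{E'}]_{W^{s,1}}$.

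The target is a bilinear bound of the shape
\[
|I| \;\lesssim\; [\mathbb{1}_{E'}]_{W^{s,1}} \cdot [\nabla u]_{W^{\alpha,q}} \cdot \Leb^k(E')^{\theta}
\]
where $\theta = \theta(s,\alpha,q) > 0$ precisely when $\alpha > 1 - (2 - 1/q)s$; combined with the lower bound this forces $\Leb^k(E') = 0$, contradicting the assumption. The hardest point is the sharp simultaneous optimization of three scales — the mollification scale for $\mathbb{1}_{E'}$, the Lusin scale for $\nabla u$, and the scale at which the Gagliardo seminorms on $S$ are computed — since any loss in any one of them destroys the sharp exponent. The exponent count is expected to be tight, in agreement with the companion construction in the paper that produces, for each $\alpha < 1 - (2 - 1/q)s$, a surface and a tangency subset violating the conclusion.
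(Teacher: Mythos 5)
Your overall strategy (contradiction, localization, Stokes-type argument exploiting the pointwise vanishing on $E'$) is aligned in spirit with the paper, but there are real differences and, more importantly, a genuine gap.

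On the route: the paper does not run the Stokes argument in $\R^k$ with a $(k-1)$-form. Instead it uses Proposition~\ref{Non-inv} to extract a pair of coordinates $a,b$ and an index $p$ with $\partial_a M_{p,b}-\partial_b M_{p,a}\neq 0$, then slices along $2$-planes $w+W_{a,b}$ via Proposition~\ref{restrictionW1,p}, reducing everything to a curl of the $1$-form $\mathfrak m - g$ in the plane. The $2$-dimensional reduction is not cosmetic: the key technical step, Proposition~\ref{prop:delicata2}, is proved by constructing, around Dorronsoro superdensity points, a sequence of rectangles of controlled eccentricity whose boundaries lie in $E$ up to length $\lesssim r^{1+s/(1-s)}$ (Proposition~\ref{prop:rettangoli}), and then applying the rough Stokes theorem (Proposition~\ref{divergencethm}) to those rectangles together with a Poincar\'e/Morrey estimate on the short boundary gaps. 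Your version with a $(k-1)$-form on a full-dimensional domain would require a $(k-1)$-cell analogue of this construction, which you have not supplied and which is not an obvious generalization.

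The concrete gap is that your ``target bilinear bound'' $|I|\lesssim [\mathbb 1_{E'}]_{W^{s,1}}\cdot[\nabla u]_{W^{\alpha,q}}\cdot \Leb^k(E')^{\theta}$ is conjectured, not derived: you never exhibit the three scales you mention, never compute the exponent, and say yourself that the ``hardest point'' — getting the sharp threshold $\alpha>1-(2-1/q)s$ — is unresolved. In the paper that threshold drops out of a precise competition between the $r^{1+s/(1-s)}$ boundary-gap estimate (Dorronsoro superdensity applied to $\mathbb 1_E$) and the fractional Poincar\'e/H\"older control of $g$ on the gaps, carried out separately in the regimes $\alpha q\le 1$, $1<\alpha q<\infty$, $q=\infty$. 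Without that computation the proposal does not establish the claimed exponent.

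A smaller but real misattribution: the Lusin-type theorem for gradients (Theorem~\ref{lusinregSobolev}) is used in the paper only for the \emph{converse} direction, i.e.\ building the sharp counterexamples of Theorem~\ref{lusin:intro}; it plays no role in the proof of the Frobenius-type theorem you are asked to prove. Folding it into the forward argument, as you propose, is a misconception about where the ingredients belong.
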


Theorem \ref{main:intro:frobenius} is in fact sharp, as the following result shows.

\begin{theorem}\label{lusin:intro}
    Suppose $V$ is a $k$-dimensional distribution of class $C^1$ and let $ q \in [1, \infty]$, $s, \alpha \in (0,1)$ be such that $s< 1/2$ and
\[
\alpha < 1 - \left(2 - \frac{1}{q} \right) s.
\]
Then, there exists an embedded $k$-dimensional submanifold $S$ of class $Y^{1+\alpha,q}$ and a Borel set $E$ with $\mathbb{1}_E\in W^{s,1}(S)$ such that 
$$E\subseteq \tau(S,V) \qquad\text{and}\qquad \Haus^k(E)>0.$$
\end{theorem}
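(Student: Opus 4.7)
The plan is to refine the Lusin-type constructions of \cite{zbMATH00021945} and \cite{Balogh2003SizeGradient} so as to produce a surface whose tangency set has positive measure and whose gradient fractional regularity is precisely matched to the fractional perimeter of the tangency set via the critical exponent $\alpha<1-(2-1/q)s$. First, I would localize: in suitable coordinates near a non-involutive point I may write $\R^n=\R^k\times\R^{n-k}$ and $V(p)=\gr(A(p))$ for a $C^1$ matrix-valued map $A$, and I look for $S$ as the graph of a $C^1$ map $f:U\subset\R^k\to\R^{n-k}$. The tangency condition at $(x,f(x))$ then reduces to the pointwise relation $\nabla f(x)=A(x,f(x))$, so the problem becomes one of producing a pair $(f,E)$ with $f\in C^1$, $\nabla f\in W^{\alpha,q}$, $\mathbb1_E\in W^{s,1}$, and the PDE above holding on a set $E$ of positive $k$-dimensional Lebesgue measure.

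I would build $E=\bigcap_j E_j$ as a fat Cantor set in $\R^k$, with $E_j$ a disjoint union of $N_j$ axis-parallel cubes of side $r_j\downarrow 0$, and construct $f=\lim_j f_j$ inductively: at each step, $f_j$ is obtained by modifying $\nabla f_{j-1}$ on the dropped cubes $E_{j-1}\setminus E_j$ via a Whitney-type smoothing so that $\nabla f_j=A(\cdot,f_j)$ pointwise on $E_j$. This is the Lusin-type step \`a la Alberti, executed with tight quantitative control across scales. The increment $\nabla f_j-\nabla f_{j-1}$ is supported in $E_{j-1}\setminus E_j$ with amplitude of order $r_j$ (coming from the $C^1$-modulus of $A$ at scale $r_j$). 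The fractional Sobolev norms then decompose over scales and give estimates of the schematic form
\[
[\mathbb1_E]_{W^{s,1}}\ \lesssim\ \sum_j N_j\,r_j^{k-s},
\qquad
[\nabla f]_{W^{\alpha,q}}^q\ \lesssim\ \sum_j r_j^{(1-\alpha)q}\,|E_{j-1}\setminus E_j|.
\]

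For $r_j=2^{-j}$ and $N_j$ tuned so that $\Leb^k(E)>0$, the sharp condition $\alpha<1-(2-1/q)s$ is precisely what makes both series converge simultaneously, with a small amount of room to spare for the Cantor parameters. The main technical obstacle I foresee is the second estimate: the naive scale-wise bound is not by itself sufficient to control the Gagliardo double integral defining $[\nabla f]_{W^{\alpha,q}}$, because of cross-scale interactions between increments at different levels. I would handle this through a careful interpolation of the scale-wise $L^\infty$ bound on $\nabla f_j-\nabla f_{j-1}$ against its $L^q$ norm, combined with a Besov-type characterization of $W^{\alpha,q}$ via dyadic differences; the exponent $2-1/q$ appearing in the statement is then a consequence of this interpolation. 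Once these estimates are in place, $S=\gr(f)$ is an embedded $Y^{1+\alpha,q}$-rectifiable submanifold by construction, $E\subseteq\tau(S,V)$ by the prescribed PDE, and $\Haus^k(E)=\Leb^k(E)>0$ together with $\mathbb1_E\in W^{s,1}(S)$ complete the proof.
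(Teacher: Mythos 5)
Your overall plan (localize to a graph, reduce to the ODE-type constraint $\nabla f=A(\cdot,f)$ on a fat Cantor set $E$, build $f$ iteratively with Whitney-type cutoffs) is exactly the strategy the paper follows in Section~\ref{sectioncounterexamples} (Theorem~\ref{lusinregSobolev}, proved via Proposition~\ref{compatto}, Proposition~\ref{prop5.3}, and the scale-by-scale analysis in \S\ref{Caso1}). However, the schematic estimates you write down are not the right ones, and they hide the single ingredient that makes the sharp threshold appear.

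The essential structural parameter is the \emph{gap width} $\rho_j$ between the surviving cubes at generation $j$, and it must be decoupled from the cube side length $r_j$. Concretely, the paper takes $r_j\sim 2^{-Bj}$ and $\rho_j\sim 2^{-Bj/(1-s)}\ll r_j$, so the gaps vanish much faster than the cube sides; this is what makes $\Leb^k(E)>0$ possible at all. Your schematic estimate $[\mathbb1_E]_{W^{s,1}}\lesssim\sum_j N_j r_j^{k-s}$ cannot be correct: for a fat Cantor set one needs $N_j r_j^{k}\to c>0$, whence $\sum_j N_j r_j^{k-s}\sim\sum_j r_j^{-s}$ diverges. The correct bound (Proposition~\ref{compatto}) is $[\mathbb1_E]_{W^{s,1}}\lesssim\sum_j 2^{Bj}\rho_j^{1-s}$, which is finite precisely because of the faster decay of $\rho_j$. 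Similarly, the amplitude of the gradient increment $Du_{j+1}-Du_j$ is not $\sim r_j$: the cutoff $\sigma_Q^{j+1}$ transitions over a strip of width $\rho_{j+1}$, so $\|D\sigma_Q^{j+1}\|\sim\rho_{j+1}^{-1}$ and the dominant term of $Du_{j+1}-Du_j$ (the $\Phi_j$ term in the paper) has amplitude $\sim\rho_{j+1}^{-1}r_j^{2}\gg r_j$ and Lipschitz constant $\sim\rho_{j+1}^{-2}r_j^2$. Your proposed $[\nabla f]_{W^{\alpha,q}}^q\lesssim\sum_j r_j^{(1-\alpha)q}|E_{j-1}\setminus E_j|$ is off: the Gagliardo double integral is estimated (directly, not through a Besov/dyadic characterization) by splitting each integral at distance $\rho_{j+1}$ — the near range is bounded using the Lipschitz constant, the far range using the sup norm — and gives the convergence criterion $\sum_j 2^{Bj}\rho_j\,\rho_{j+1}^{-q(1+\alpha)}r_j^{2q}<\infty$. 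Plugging in the scaling $\rho_j\sim 2^{-Bj/(1-s)}$ and $r_j\sim 2^{-Bj}$, this converges if and only if $\alpha<1-(2-1/q)s$. In other words, the exponent $2-1/q$ is produced by the balance between the two decay rates $r_j$ and $\rho_j$ in the Cantor construction, not by an interpolation inequality or Besov characterization. Finally, the passage from the Euclidean model to the surface $S=\gr(f)$ and the fractional Sobolev spaces on $S$ is exactly what you propose, done via Proposition~\ref{equivalenza}.

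In short: right framework, but the quantitative core is missing. Introduce the gap widths $\rho_j$ as an independently tunable sequence, obtain the cutoff derivatives and increment amplitudes in terms of $\rho_j$, and estimate the Gagliardo seminorm scale-by-scale by splitting the kernel integral at $\rho_j$; without these your series estimates do not close and do not produce the threshold claimed.
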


For the sake of discussion of the content of Theorems \ref{main:intro:frobenius} and \ref{lusin:intro}, let us say that $q=\infty$ and thus $S$ is of class $C^{1,\alpha}$ and that $\mathbb{1}_{\tau(S,V)}\in W^{s,1}$. Theorem  \ref{main:intro:frobenius} tells us, and this is the reason for which it is an extension of Frobenius' theorem, that is, if $\alpha>1-2s$, then the tangency set $\tau(S,V)$ must be $\Haus^k$-null. On the other hand, if $\tau(S,V)$ has positive measure then $\tau(S,V)$ cannot be a set of finite $s$-fractional perimeter
for every $s>(1-\alpha)/2$. In other words, the more regular the surface is, the worse the boundary of the tangency set to a non-involutive distribution gets.

Finally, the last result connected to Frobenius' theorem we provide is the following generalization of \cite[Proposition 8.2(2)]{Balogh2011SizeOT} that reads

\begin{theorem}\label{lusin:extremal}
    Suppose $V$ is a $k$-dimensional distribution in $\R^n$ of class $C^1$. Then, there exists a submanifold of class $\bigcap_{0<\alpha<1}C^{1,\alpha}$ of $\R^n$ such that 
    $$\Haus^k(\tau(S,V))>0.$$
\end{theorem}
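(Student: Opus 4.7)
The plan is a diagonal refinement of the iterative construction behind Theorem \ref{lusin:intro}, promoting the conclusion from a single fixed $\alpha<1$ to the full intersection $\bigcap_{0<\alpha<1}C^{1,\alpha}$. After fixing a point $p$ with $V(p)$ non-involutive and straightening $V(p)$ via a $C^1$ change of coordinates, I look for $S$ as the graph of a function $f:Q\to\R^{n-k}$ on a small cube $Q\subset\R^k$. The tangency condition then becomes the pointwise identity $Df(x)=A(x,f(x))$ for a $C^1$ matrix field $A$ encoding $V$.

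I then build inductively $f_n\in C^1(Q;\R^{n-k})$ and closed nested sets $E_n\subset E_{n+1}\subset Q$ with $|Q\setminus E_n|\le 2^{-n}|Q|$, satisfying $Df_n=A(\cdot,f_n)$ on $E_n$, $f_{n+1}\equiv f_n$ on a neighbourhood of $E_n$, and $\|f_{n+1}-f_n\|_{C^{1,\beta}}\lesssim \delta_n^{\alpha_n-\beta}$ for every $\beta\le\alpha_n$. Here $\alpha_n\uparrow 1$ and $\delta_n\downarrow 0$ are parameters to be tuned, and the perturbation at stage $n+1$ is produced on the residual gap $Q\setminus E_n$ by the same building block used in the proof of Theorem \ref{lusin:intro} with exponent $\alpha_n$: freeze $V$ at a point of each cube of side $\delta_n$, construct the prescribed gradient oscillation for the resulting constant-coefficient distribution, and compensate for the coefficient variation using the $C^1$ regularity of $V$. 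Taking $\delta_n$ so small that $\sum_n \delta_n^{\alpha_n-\beta}<\infty$ for every $\beta<1$ (for instance $\delta_n=2^{-n^2}$ with $\alpha_n=1-1/n$), the sequence $f_n$ is Cauchy in $C^{1,\beta}$ for every $\beta<1$, its limit $f$ lies in $\bigcap_{0<\alpha<1}C^{1,\alpha}$, and the tangency set of $\mathrm{graph}(f)$ contains $\bigcup_n E_n$, which has full measure in $Q$, so in particular positive $\Haus^k$-measure.

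The main obstacle is the frozen-coefficient step for a distribution lacking the vertical invariance available in Balogh's Heisenberg setting, where a vertical shift of the graph preserves tangencies and successive corrections cannot spoil previous gains. For a general $C^1$ distribution $V$, the corrector at stage $n+1$ must absorb the pointwise mismatch between $A(\cdot,f_n)$ and $A(\cdot,f_{n+1})$; this is handled by keeping the amplitude of the $n$-th perturbation of order $\delta_n^{1+\alpha_n}$, so that the discrepancy $|A(\cdot,f_{n+1})-A(\cdot,f_n)|\lesssim \delta_n^{1+\alpha_n}$ is absorbed into the error budget of the subsequent iteration. This is exactly the quantitative refinement already developed for Theorem \ref{lusin:intro}; combined with the diagonal choice $\alpha_n\uparrow 1$ and the fast decay of $\delta_n$, it upgrades the conclusion from a single $\alpha$ to the full intersection $\bigcap_{0<\alpha<1}C^{1,\alpha}$.
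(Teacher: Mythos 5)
Your strategy is a genuinely different route from the paper's. The paper proves Theorem~\ref{lusin:extremal} by a \emph{single} pass of the nested-cube construction: Theorem~\ref{lusinregSobolev} with $s=0$ (i.e.\ the gap sequence $\rho_j\sim j^{-2}2^{-Bj}$ from \S\ref{Caso1}) already yields $u\in\bigcap_{0<\alpha<1}C^{1,\alpha}(\Omega)$ directly, because the H\"older seminorm estimate \eqref{stima1dup} stays finite for every fixed $\alpha<1$ once $s=0$; no outer diagonal iteration over exponents $\alpha_n\uparrow 1$ is ever needed. You instead propose a two-level scheme: at each outer stage apply a version of the Lusin construction with exponent $\alpha_n$, then take a diagonal limit.

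As written, the outer iteration does not close, and I do not see how to repair it without essentially reproving the paper's estimate. Two concrete problems.
\textbf{(a) The data degrades across stages.} At stage $n+1$ you must solve, on the gap $Q\setminus E_n$, the PDE $Dg_n = A(\cdot,f_n+g_n)-Df_n$ with $g_n=0$ near $E_n$. This puts the map $F_{n+1}(x,y):=A(x,f_n(x)+y)-Df_n(x)$ in the role of the datum $F$ in Theorem~\ref{lusinregSobolev}/Proposition~\ref{prop5.3}, which require $F$ locally \emph{Lipschitz}. But $Df_n$ is only $C^{0,\alpha_{n-1}}$, so $F_{n+1}$ is merely H\"older in $x$ after stage~$2$. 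The inductive hypothesis you need (Lipschitz data) is destroyed by the very object you produce. You would have to prove a H\"older-data variant of the Lusin construction with quantitative control of the constants in terms of $[Df_n]_{\alpha_{n-1}}$; that is not a cosmetic extension, and it is exactly where the nested $\rho_j$ tuning in \S\ref{Caso1} does the real work.
\textbf{(b) The claimed per-stage estimates are not produced by the building block you describe.} ``Freeze $V$ on cubes of side $\delta_n$ and build the prescribed oscillation'' is a single-scale operation: it achieves $Dg_n=F_{n+1}(\cdot,g_n)$ only at isolated points or on a $\Haus^k$-null set. To gain a \emph{positive-measure} piece of $E_{n+1}\setminus E_n$ you need the full Cantor hierarchy inside $Q\setminus E_n$ at every outer stage, and then the bound $\|f_{n+1}-f_n\|_{C^{1,\beta}}\lesssim\delta_n^{\alpha_n-\beta}$ and the amplitude bound $\|f_{n+1}-f_n\|_\infty\lesssim\delta_n^{1+\alpha_n}$ are not what the construction gives: Proposition~\ref{prop5.3} produces $\|u\|_\infty\sim\delta$ and $\|Du\|_\infty\sim M_1$ (order one), not $\delta^{1+\alpha}$ and $\delta^\alpha$. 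The exponent bookkeeping that makes the diagonal sum converge is asserted, not derived, and in fact contradicts the crude sizes coming out of the construction.

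In short, the diagonal idea is reasonable in spirit, but the key analytic fact -- that a single choice of the gap sequence $\boldsymbol\rho$ already gives all H\"older exponents $\alpha<1$ simultaneously -- is precisely what the paper proves and what your outer iteration is trying (unnecessarily, and with unjustified estimates) to reconstruct.
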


Notice that, in light of the above discussion, Theorem \ref{lusin:extremal} is sharp in the following sense: if one takes Theorem \ref{main:intro:frobenius} and lets $\alpha$ be arbitrarily big, we see that we cannot expect the tangency set to contain any set of finite fractional perimeter, even if we require the surface to be of regularity $\bigcap_{0<\alpha<1} Y^{1+\alpha,1}$. Rephrasing, in this regime we must expect the tangency set to be the typical Borel set, without any enhanced regularity of its boundary. 

In the following picture we summarize the values of $\alpha$ and $s$ for which we have counterexamples or Frobenius-type theorem, for a fixed $q\in [1,\infty]$. Notice that, with our arguments, we cannot decide what happens at the boundary between the regions of counterexamples (orange region) or the Frobenius-type theorems (blue region).

\begin{figure}[ht]
  \centering
  \begin{tikzpicture}[scale=6]
    \def\q{3/2}                           
    \pgfmathsetmacro\xone{1/2}          
    \pgfmathsetmacro\xtwo{\q/(2*\q-1)}  
    \pgfmathsetmacro\yone{1 - (\xone/\xtwo)}

  \node[anchor=north east,font=\small] at (0,0) {$0$};

    \fill[orange!20]
      (0,0) -- (0,1) -- (\xone,\yone) -- (\xone,0) -- cycle;
    \fill[blue!20]
      (0,1) -- (\xtwo,1) -- (\xtwo,0)-- (\xone,0) -- (\xone,\yone) -- cycle;

    \draw[->] (0,0) -- (1.1,0) node[right] {$s$};
    \draw[->] (0,0) -- (0,1.1) node[above] {$\alpha$};

    \node[above left,font=\small]  at (0,1)    {$1$};
  \node[below,font=\small]      at (\xone,0) {$\tfrac12$};
  \node[below,font=\small]      at (\xtwo,0) {$\tfrac{q}{2q-1}$};
  \node[below,font=\small]      at (1,0)     {$1$};

    \draw[dashed] (0,1)    -- (1.1,1);
    \draw[dashed] (\xtwo,0) -- (\xtwo,1);
    \draw[dashed] (1,0)    -- (1,1);

    \draw[line width=1pt]         (0,1) -- (\xone,\yone);
    \draw[dashed]  (\xone,\yone) -- (\xtwo,0);

    \draw[line width=1pt]         (\xone,0) -- (\xone,\yone);
    \draw[dashed]  (\xone,\yone) --(\xone,0.65);
    \draw[dashed]  (\xone,0.77) --(\xone,1);

    \node[align=center,font=\small] at (0.25,0.3) {Counterexamples};
    \node[align=center,font=\small] at 
      ({0.27 + (\xtwo - 0.3)/2},0.7) {Frobenius-type\\theorems};
  \end{tikzpicture}
\end{figure}

Let us remark that obtaining such sharp, gapless results with our techniques was really surprising since the construction of the counterexamples and the proof of the Frobenius-type theorems are based on completely different ideas.

Nevertheless, this family of results completely settles our first question, and shows what we should expect for the second question. Before proceeding with the discussion of the ideas of the proof, let us explore future directions and open problems. 

\smallskip

\subsection{Future directions}

One of the natural generalizations of surfaces are currents, therefore a natural question to ask would be the following. 

\begin{itemize}
    \item[(Q)] Suppose that $T$ is a $k$-rectifiable current tangent to a $C^1$ non-involutive distribution. If $T$ has a boundary of fractional regularity $s\in(0,1]$, is it true that, if $s>1/2$, then $T=0$?
\end{itemize}

Notice that the above question is still quite not well defined, since before answering (Q) one needs to establish what fractional regularity for a boundary of a current means. A first result in this direction can be obtained when $V$ is smooth and enjoys the H\"ormander condition, that will appear in \cite{frobenius4}.

\begin{theorem}[{\cite{frobenius4}}]\label{annuncio}
      Suppose that $T$ is a $k$-current with finite mass in $\R^n$ and let $\tau=\frac{dT}{d\lVert T\rVert}$ be its polar $k$-vector, meaning that $T=\tau\mu$ with $\mu$ Radon measure. Let $V$ be a smooth $k$-dimensional distribution satisfying the H\"ormander condition, and assume that 
    $\mathrm{span}(\tau)=V$ for $\lVert T\rVert$-almost every $x\in \R^n$,
    where the span of a $k$-vector was introduced in \cite[\S 2.3] {alberti2020geometric}. 
    Suppose further that there exists an $\alpha\in (0,1]$ such that 
    $$\mathbb{M}[T-(\Phi_{h}^X)^\#T]\lesssim \lvert h\rvert^\alpha,$$
    where $X$ is a $C^1$ vector field tangent to $V$, $\phi_h^X$ is the flow of said vector field at time $h$ and $(\Phi_{h}^X)^\#T$ is the pullback of $T$ under the map $\Phi_{h}^X$. Then $\mu\ll \Leb^n$.
\end{theorem}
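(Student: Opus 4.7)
The plan is to combine the H\"older-in-time regularity of the pullbacks of $T$ along $V$-tangent flows with the H\"ormander condition on $V$, in order to establish a quantitative H\"older continuity of $T$ in all Euclidean directions, and then to conclude $\mu\ll\Leb^n$ by a mollification argument.

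First, I would extend the hypothesis from a single flow to arbitrary compositions. For $\Phi=\Phi_{h_N}^{Y_N}\circ\cdots\circ\Phi_{h_1}^{Y_1}$ a composition of flows of unit-speed $V$-tangent vector fields $Y_i$, the telescoping
\[
T-\Phi^\# T=\sum_{i=1}^{N}\bigl(\Phi_{h_1}^{Y_1}\circ\cdots\circ\Phi_{h_{i-1}}^{Y_{i-1}}\bigr)^\#\bigl(T-(\Phi_{h_i}^{Y_i})^\# T\bigr),
\]
combined with the uniformly bounded $C^1$-norms of the flows, yields $\mathbb{M}[T-\Phi^\# T]\lesssim\sum_{i=1}^N|h_i|^\alpha$. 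By the Ball--Box theorem for a H\"ormander distribution of step $s$, every small Euclidean vector $v$ is reachable from $x_0$ through such a composition $\Phi_v$ with a uniformly bounded number of pieces and $\sum_i|h_i|\lesssim|v|^{1/s}$. The concavity of $t\mapsto t^\alpha$ then gives the ``sub-Riemannian H\"older estimate''
\[
\mathbb{M}\bigl[T-\Phi_v^\# T\bigr]\lesssim|v|^{\alpha/s}\qquad\text{for all sufficiently small }v.
\]

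Next, I would mollify $T$ along the family $\{\Phi_v\}$. Choosing $\Phi_v$ to depend smoothly on $v$ (via privileged coordinates adapted to the graded structure of $V$ at $x_0$), and setting
\[
T_\varepsilon:=\int_{\R^n}\rho_\varepsilon(v)\,\Phi_v^\# T\,dv
\]
for a standard mollifier $\rho_\varepsilon$ supported in a small Euclidean ball, the previous estimate gives $\mathbb{M}[T-T_\varepsilon]\lesssim\varepsilon^{\alpha/s}\to 0$. On the other hand, because $\Phi_0=\mathrm{id}$ and $\Phi_v(x_0)=x_0+v$, for $y$ in a neighborhood of $x_0$ the map $v\mapsto\Phi_v^{-1}(y)$ is a local $C^1$-diffeomorphism with Jacobian close to $-I$; Fubini's theorem then shows $\|T_\varepsilon\|(A)=0$ for every $\Leb^n$-null Borel set $A$, so $T_\varepsilon$ is absolutely continuous. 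Localizing this argument around each point of $\mathrm{supp}\,\mu$ and using that the subspace of $\Leb^n$-absolutely continuous currents is closed in mass norm, the limit $T$ is itself absolutely continuous, i.e.~$\mu\ll\Leb^n$.

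The principal obstacle is the simultaneous construction of the family $\{\Phi_v\}$ enjoying all three properties: the quantitative H\"older estimate, a smooth dependence on $v$, and the non-degeneracy of $v\mapsto\Phi_v^{-1}(y)$ required to obtain absolute continuity of the mollified currents. Balancing these demands a careful use of the nilpotent approximation of the H\"ormander distribution at each point and a quantitative analysis of the associated Carnot--Carath\'eodory balls. The spanning assumption $\mathrm{span}(\tau)=V$ enters precisely at this step, ensuring that the pullback regularity along $V$-tangent flows genuinely reflects regularity of the mass measure $\mu$, rather than being absorbed into a component of the polar $k$-vector transverse to $V$.
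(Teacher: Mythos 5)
The paper does not prove Theorem~\ref{annuncio}: it is stated as an announcement of a result from the forthcoming reference \cite{frobenius4}, and the only discussion here is the informal remark that fractional boundary regularity plus H\"ormander should force $\mu$ to be diffuse, with a nod to the PDE machinery of \cite{DPR} and \cite{ReversePansu} in the normal-current case. So there is no internal proof to compare yours against; I can only assess the proposal on its own merits.

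Your mollification strategy is the natural one and is consistent with the paper's informal description, but as written it has real gaps. First, the hypothesis as stated quantifies a single vector field $X$ tangent to $V$, while your telescoping step silently upgrades this to a uniform H\"older estimate for \emph{all} (unit-speed) $V$-tangent fields $Y_i$; without that upgrade, a single flow cannot produce full-dimensional smearing, so this is not a cosmetic point but a genuinely stronger hypothesis, and you would need to reconcile it with the intended statement. Second, ``the Ball--Box theorem'' does not, by itself, produce a family $\{\Phi_v\}$ that is simultaneously (a) built from a uniformly bounded number of horizontal flow pieces with $\sum_i|h_i|\lesssim|v|^{1/s}$, (b) smooth in $v$, and (c) such that $v\mapsto\Phi_v^{-1}(y)$ is a submersion at $v=0$ uniformly in $y$ near $x_0$; what you actually need is a quantitative Chow lemma together with anisotropic exponential coordinates of the second kind (Nagel--Stein--Wainger / Rothschild--Stein), and this is the load-bearing construction, not merely an ``obstacle'' to acknowledge. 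Third, the spanning condition $\mathrm{span}(\tau)=V$ never enters any step of your argument; your closing remark about it is a heuristic, not a use. If your reading of the hypothesis is correct, the spanning condition should either be shown to be dispensable or its actual role located. Finally, a minor slip: in your telescoping identity the prefix should be $\bigl(\Phi_{h_{i-1}}^{Y_{i-1}}\circ\cdots\circ\Phi_{h_1}^{Y_1}\bigr)^{\#}$, since $(f\circ g)^{\#}=g^{\#}\circ f^{\#}$. The concluding steps (mass-norm approximation by absolutely continuous currents, Fubini with the submersion $v\mapsto\Phi_v^{-1}(y)$, and closedness of the absolutely continuous class under mass convergence) are correct as stated.
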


Notice that the condition $\mathbb{M}[T-(\Phi_{h}^X)^\#T]\lesssim \lvert h\rvert^\alpha$ encodes a fractional-type regularity for the boundary. For instance, if $k=n$ and $\alpha=1$ this characterizes $n$-dimensional normal currents, i.e., $BV$ functions. Essentially the fractional regularity of the boundary and the fact that the iterate commutators of $V$ span $\R^n$ imply that $\mu$ is diffuse. The connection with deep PDE results like \cite{DPR} and \cite{ReversePansu} is clear when the current is normal, however when the boundary becomes only a distribution those techniques break down.

In order to tackle questions like (Q), following the approach of Theorem \ref{annuncio} would mean to show that, if the boundary is too regular, one can prove that the total variation of $T$ must be absolutely continuous to a Hausdorff measure of dimension strictly bigger than $k$. Indeed, this will require further, non-trivial work.

\subsection*{Ideas of the proofs}

To illustrate the core mechanism, we focus on the simplest non‐involutive case: in $\R^3$ let 
\begin{equation}
  V(x) = \mathrm{span}\{X(x),Y(x)\},
  \quad\text{where}\quad
  X(x)=e_1 - 2x_2\,e_3,\quad
  Y(x)=e_2 + 2x_1\,e_3.
    \label{distroH1}
\end{equation}

Define the linear map $M(x):\R^2\to\R^3$ by 
\[
  M(x)e_1 = X(x), 
  \quad
  M(x)e_2 = Y(x).
\]
A direct calculation shows that any Lipschitz function 
\[
  f:K\Subset\R^2\;\longrightarrow\;\R
  \quad\text{whose graph } \Gamma=\{(x,f(x)):x\in K\}
\]
is everywhere tangent to $V$ on $K$ if and only if 
\[
  Df(x) = M(x)
  \qquad\text{for almost every }x\in K.
\]
But if this identity actually held on an open neighborhood, then $f$ would inherit two continuous partial derivatives and thus be $C^2$, contradicting Schwarz’s theorem on equality of mixed partial derivatives.
The key takeaway is that whenever $Df=M$ on a \emph{large} set and $f$ enjoys too much regularity, one forces a forbidden equality of mixed derivatives.  Hence surface regularity and size of the tangency set must balance each other.

The proof of our fractional Frobenius theorem, Theorem \ref{main:intro:frobenius}, follows a similar philosophy.  Building on Delladio’s insight \cite{Del1,del2}, one shows that if a Borel set $E\subset\R^n$ has indicator $\mathbb 1_E\in W^{s,1}$, then $E$ satisfies a \emph{super‐density} property that is, for almost every $x\in E$,
\[
  \lim_{r\to0}
  \frac{\Leb^n\bigl(B(x,r)\cap E\bigr)}{r^{n+s^*}}
  = 0,
  \quad
  s^*=\frac{n}{n-s}.
\]
This follows from Dorronsoro’s differentiability theorem for Besov functions, see \cite{dorronsoro}.
Using super‐density in conjunction with a Stokes–type theorem for rough forms on finite‐perimeter sets, see Proposition~\ref{divergencethm}, and a suitable Poincaré/Morrey's estimate, depending on the Sobolev/Hölder regime, one proves the following

\begin{theorem}[(Locality of the divergence)]\label{localitydiv}
  Let $g\in W^{\alpha,q}(\R^2;\R^2)$ be continuous and vanish on a Borel set $E$ with $\mathbb 1_E\in W^{s,1}(\R^2)$.  If
  \[
    \alpha \;>\; 1 - \Bigl(2-\tfrac1q\Bigr)\,s,
  \]
  then $\mathrm{d} g\equiv0$ in the distributional sense.
\end{theorem}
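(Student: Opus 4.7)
The plan is to show $\langle dg,\phi\rangle=-\int g\wedge d\phi=0$ for every test function $\phi\in C_c^\infty(\R^2)$ by approximating $E$ from outside by a family of finite-perimeter sets $A_r$ at dyadic scale $r$, and combining the Stokes-type theorem for rough forms on finite-perimeter sets (Proposition~\ref{divergencethm}) with a quantitative smallness estimate for $g$ coming from a fractional Poincar\'e/Morrey inequality on cubes where $g$ vanishes on a set of large relative measure.

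The first step is to apply Dorronsoro's differentiability theorem for Besov functions to $\mathbb{1}_E\in W^{s,1}(\R^2)$ in order to obtain that $\Haus^2$-almost every $x\in E$ is a super-density point, in the quantitative form
\[
\Leb^2(B(x,r)\setminus E)\;=\;o(r^{2+\sigma(s)})\qquad\text{as }r\to 0,
\]
for a positive exponent $\sigma(s)$ provided by the Besov embedding. Then I would partition a large cube $Q_R\supseteq\mathrm{supp}(\phi)$ into dyadic subcubes of side $r$ and split them into \emph{good} cubes $Q$, for which $\Leb^2(Q\cap E)/\Leb^2(Q)\ge 1-\epsilon_r$, and \emph{bad} cubes, with $\epsilon_r\to 0$ tuned against $\sigma(s)$ so that the union $B_r$ of bad cubes satisfies $|B_r|\to 0$ while the reduced boundary of the union $A_r$ of good cubes remains $\Haus^1$-controlled.

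On each good cube $Q$, I would combine the vanishing of $g$ on a subset of relative measure at least $1-\epsilon_r$ with $g\in W^{\alpha,q}(Q)$ via a fractional Poincar\'e-type inequality to obtain
\[
\|g\|_{L^p(Q)}\;\le\; C\,\epsilon_r^{\theta}\,r^{\alpha-\frac{2}{q}+\frac{2}{p}}\,[g]_{W^{\alpha,q}(Q)},
\]
with $p=\infty$ in the Morrey regime $\alpha q>2$ and $p$ the critical Sobolev exponent otherwise; the factor $\epsilon_r^\theta$ encodes the gain coming from the large vanishing set. Splitting
\[
\int g\wedge d\phi\;=\;\int_{A_r} g\wedge d\phi\,+\,\int_{B_r} g\wedge d\phi,
\]
the term over $B_r$ is estimated via H\"older together with the previous bound and $|B_r|\to 0$, while on $A_r$ one invokes Proposition~\ref{divergencethm} applied to the rough form $\phi g$ to turn the bulk integral into a flux through $\partial^*A_r$ that is in turn bounded by the $L^p$-smallness of $g$ on the good cubes and the controlled surface measure of the reduced boundary.

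The main obstacle, and the point where the hypothesis $\alpha>1-(2-\tfrac{1}{q})s$ should enter in its sharp form, is the exponent balancing at this final stage: one must calibrate the scale $r$, the density threshold $\epsilon_r$, the Sobolev exponent $p$, and Dorronsoro's super-density rate $\sigma(s)$ so that both the bulk error coming from the bad cubes and the boundary flux through $\partial^*A_r$ tend to zero simultaneously as $r\to 0$. A direct but delicate interpolation among these exponents should close precisely at the threshold $\alpha=1-(2-\tfrac{1}{q})s$, matching the counterexamples of Theorem~\ref{lusin:intro} and explaining the sharpness of Theorem~\ref{main:intro:frobenius}. A na\"ive application of the classical divergence theorem would lose one full derivative at this step, so it is essential that Proposition~\ref{divergencethm} handles the rough form $\phi g$ without that loss.
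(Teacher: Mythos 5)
There are several genuine gaps here, and the most fundamental one is that you are targeting the wrong conclusion. The literal wording ``$\mathrm{d}g\equiv 0$ in the distributional sense'' in the introduction is informal shorthand; the actual result proved in the body is Proposition~\ref{prop:delicata2}, which asserts only that \emph{$\mathrm{d}g=0$ for $\Leb^2$-almost every $x\in E$}, and moreover it requires the additional hypothesis $\mathrm{d}g\in L^1_{\rm loc}$. The stronger global conclusion you aim for --- $\int g\wedge d\phi=0$ for every test function $\phi$ --- is simply false under the stated hypotheses: $E$ has finite $\Leb^2$-measure (being in $L^1$), and nothing prevents $g$ from being, say, a fixed non-closed smooth form off $E$. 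Correspondingly, the mechanism the paper uses is pointwise: fix a Lebesgue point $x\in E$ of $\mathrm{d}g$, write $\mathrm{d}g(x)$ as the limit of averages over shrinking domains, and let Stokes convert each average to a boundary term. Your global good/bad cube decomposition and the claim that the two contributions vanish together does not reproduce this and cannot: if you apply Proposition~\ref{divergencethm} to $\phi g$ on $A_r$, the identity $\int_{A_r}d(\phi g)=\int_{\partial^*A_r}\phi g$ brings back $\int_{A_r}\phi\,\mathrm{d}g$ on the left-hand side, which is essentially the quantity one wishes to control, and there is no independent bound on it nor on $\int_{B_r}\phi\,\mathrm{d}g$ (where $\mathrm{d}g$ may very well concentrate); the argument does not close.

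The second, equally serious gap is that you never control the trace of $g$ on the one-dimensional boundary $\partial^*A_r$. Your fractional Poincar\'e estimate bounds $\|g\|_{L^p(Q)}$ by the interior density of $E$ and the bulk $W^{\alpha,q}$ seminorm, but this is a two-dimensional average and says nothing about the values of $g$ on $\partial Q$, except in the Morrey regime $\alpha q>2$ where one can pass to $L^\infty$. In the Sobolev/critical regimes $\alpha q\le 2$ you must replace bulk control with genuine one-dimensional control of the boundary integrand, and this is precisely the content of the missing technical ingredient, Proposition~\ref{prop:rettangoli}: a Fubini-type slicing argument (via Proposition~\ref{restrictionW1,p}) that, at a.e.\ point of $E$, produces shrinking rectangles whose one-dimensional boundaries meet $E$ in near-full $\Haus^1$-measure, whose sides carry controlled $W^{\alpha,q}$ seminorm, and whose corners lie in $E$. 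Without this slicing, the subsequent case analysis in the paper's proof (cases $\alpha q\le 1$, $1<\alpha q<\infty$, $q=\infty$) has no foothold, because the boundary flux cannot be bounded by the fractional smallness of $g$. Interior density of $E$ in a cube --- which is what Dorronsoro gives you directly --- is not enough; you must push the density information down one dimension onto lines, which is the part of the proof your proposal skips entirely.
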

Restricting this divergence‐nullity to the two non‐commuting vector fields in our model reduces Theorem~\ref{localitydiv} to Theorem~\ref{main:intro:frobenius} in arbitrary codimension, see Proposition~\ref{prop:delicata2}.

On the constructive side, we extend Alberti’s Lusin-type theorem for gradients \cite{zbMATH00021945} to a full fractional setting.

 \begin{theorem}\label{lusinregSobolev:intro}
         Let $\eta,\varepsilon>0$, $q\in[1,\infty]$ and $\alpha\in [0,1)$ and $0\leq s<q/(2q-1)$ such that 
         $$\alpha<1-\Big(2-\frac{1}{q}\Big)s.$$
         Let $\Omega$ be an open bounded set in $\R^k$ and suppose that $F:\Omega\times \R^{n-k}\to \R^{k\times {n-k}}$ is a locally Lipschitz map. 
         Then, there  are a compact set $\mathfrak{C}\subseteq \Omega$ and a function $u:\Omega\to \R^{n-k}$ such that
         \begin{itemize}
         \item[(i)]$\mathrm{supp}(u)\subseteq \Omega$, $\lVert u\rVert_{\infty}\leq \eta$ and $Du(x)=F(x,u(x))$ for $\Leb^k$-almost every $x\in \mathfrak{C}$;
             \item[(ii)] 
$\mathscr{L}^k(\Omega\setminus \mathfrak{C})\leq \varepsilon\mathscr{L}^k(\Omega)$ and $\mathbb{1}_{\mathfrak{C}}\in W^{s,1}(\Omega)$;
\item[(iii)] $u\in L^\infty(\Omega)\cap W^{1,q}(\Omega)$ and $Du\in W^{\alpha,q}(\Omega)$.
 \end{itemize}
In addition, if $0\leq s<1/2$ then $u$ is also of class $C^1_c(\Omega)$ and the identity 
$$Du(x)=F(x,u(x)) \qquad\text{holds everywhere on $\mathfrak{C}$}.$$
Finally, if $s=0$, then $u\in \bigcap_{0<\alpha<1}C^{1,\alpha}(\Omega)$. 
    \end{theorem}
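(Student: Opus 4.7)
The plan is to construct the set $\mathfrak{C}$ and the function $u$ via a Cantor-type iterative scheme in the spirit of Alberti's original Lusin theorem for gradients \cite{zbMATH00021945}, with quantitative scale parameters tuned precisely to the target fractional regularities. We reduce without loss of generality to $\Omega = (0,1)^k$ and construct inductively a decreasing sequence of compact sets $\mathfrak{C}_n\subset\Omega$, each a disjoint union of closed cubes of side $a_n\to 0$, together with a Cauchy sequence of smooth functions $u_n \in C^\infty_c(\Omega;\R^{n-k})$. The limit objects $\mathfrak{C} = \bigcap_n \mathfrak{C}_n$ and $u = \lim_n u_n$ will satisfy the three conclusions.

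At the $n$-th stage, each cube of $\mathfrak{C}_{n-1}$ is partitioned into sub-cubes of some intermediate scale $b_n$, and in each sub-cube $Q'$ we keep only a concentric smaller cube of side $a_n$ (which will be a building block of $\mathfrak{C}_n$); the remaining annular region hosts a smooth compactly supported bump $\varphi_{Q'}$. The bump is chosen so that its gradient, added to $Du_{n-1}$, matches $F(x_{Q'}, u_{n-1}(x_{Q'}))$ at every point of the kept sub-cube, up to an error $O(b_n)$ coming from the Lipschitz regularity of $F$. The explicit construction of such a bump with prescribed average gradient and uniformly bounded $C^1$-norm is the classical algebraic step in \cite{zbMATH00021945}, made possible by the affine nature of the correction; choosing the amplitudes sufficiently small guarantees $\|u_n\|_\infty \le \eta$ at every stage.

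The heart of the argument is the simultaneous choice of the scales $(a_n, b_n)$. Three quantitative constraints must be met: first, $\Leb^k(\Omega\setminus\mathfrak{C})\le\varepsilon$, which translates into control of the product $\prod_n (a_n/b_n)^k$; second, the contribution to $[\mathbb{1}_\mathfrak{C}]_{W^{s,1}}$ from the annular regions removed at stage $n$ can be computed by direct scaling, yielding a series of the form $\sum_n N_n\, b_n^{k-s}$ that must be summable, where $N_n$ is the total number of cubes in the partition; third, $[Du]_{W^{\alpha,q}}^q$ is controlled stage-by-stage by the contributions of bumps whose gradient transitions across the annulus of thickness $(b_n-a_n)$, giving a series of the form $\sum_n N_n\, b_n^{k-1}(b_n-a_n)^{1-\alpha q}$. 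A geometric choice of $b_n$ with the ratio $a_n/b_n$ tuned to balance the last two series yields precisely the compatibility condition $\alpha < 1 - (2-1/q)s$ in the prescribed range $s < q/(2q-1)$, which is the mechanism behind the sharpness of the exponents.

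The main technical obstacle is ensuring that the identity $Du(x) = F(x, u(x))$ holds pointwise on all of $\mathfrak{C}$ in the regime $s < 1/2$, rather than merely almost everywhere. Since each bump $\varphi_{Q'}$ added at stage $m > n$ has support strictly inside a region removed at stage $m$, the correction $\sum_{m>n}(u_m-u_{m-1})$ has vanishing gradient on $\mathfrak{C}_n$, so $Du_n = Du$ on $\mathfrak{C}$ for every $n$; combined with the built-in approximation $Du_n \to F(\cdot, u)$ on $\mathfrak{C}_n$ and the Lipschitz continuity of $F$, the identity follows \emph{almost everywhere} on $\mathfrak{C}$. When $s < 1/2$, the super-density property derived from $\mathbb{1}_\mathfrak{C}\in W^{s,1}$ via Dorronsoro's theorem forces $Du$ to extend continuously across $\mathfrak{C}$, upgrading $u$ to $C^1_c(\Omega)$ and yielding the identity everywhere on $\mathfrak{C}$. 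In the degenerate case $s=0$, no fractional perimeter constraint is imposed, so the ratio $a_n/b_n$ can approach $1$ arbitrarily fast, giving $Du \in C^{0,\alpha}$ for every $\alpha < 1$ and concluding the proof.
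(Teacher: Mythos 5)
Your proposed construction shares the same skeleton as the paper's proof: a Cantor-type family of nested cubes with two scales per generation, an iterative sequence of smooth functions obtained by adding affine corrections tapered by cutoffs, and convergence governed by balancing the $W^{s,1}$-seminorm of $\mathbb{1}_\mathfrak{C}$ against the $W^{\alpha,q}$-seminorm of $Du$. This matches the architecture of \S\ref{cantortypesets}, Proposition~\ref{compatto}, and Proposition~\ref{prop5.3}. However, there are three concrete gaps that would prevent the argument from producing the sharp exponent window.

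First, your claimed $W^{\alpha,q}$ series $\sum_n N_n\,b_n^{k-1}(b_n-a_n)^{1-\alpha q}$ drops the crucial smallness of the correction added at stage $n$. Because $F$ is Lipschitz and $u_{n-1}$ is already a good approximation, the increment $a_Q^{n}-a_{\mathfrak f(Q)}^{n-1}$ to the gradient is $O(r_n)$, not $O(1)$, so the affine correction $\sigma_Q(a_Q^{n}-a_{\mathfrak f(Q)}^{n-1})[\cdot-\mathfrak c(Q)]$ has amplitude $O(r_n^2)$ and the cutoff contributes a term $O(\rho_{n}^{-1}r_n^2)$ to $\|Du_n-Du_{n-1}\|_\infty$; this extra factor of $r_n$ is what carries the proof from a condition of the form $\alpha<s/q$ (which your series gives under the natural scale choice) up to the sharp $\alpha<1-(2-1/q)s$. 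Compare the estimate of $[\Phi_j]_{W^{\alpha,q}}$ in \S\ref{Caso1}, which keeps the factor $r_j^{2q}$ all the way through.

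Second, the claim that ``$Du_n=Du$ on $\mathfrak C$ for every $n$'' is false and internally inconsistent with your own construction. If the correction added at stage $m$ is supported only in the removed annulus, then its gradient vanishes on the kept sub-cube, but then $Du_m=Du_{m-1}$ there and the prescribed equality $Du_m=F(x_{Q'},u_{m-1}(x_{Q'}))$ on the kept cube cannot be enforced. In the paper's construction the cutoff $\sigma_Q^{\iota+1}$ is identically $1$ on the kept cube $Q$ and the function $u_{\iota+1}$ is genuinely affine there with gradient $a_Q^{\iota+1}$, which differs from $a_{\mathfrak f(Q)}^\iota=Du_\iota|_Q$ by $O(r_\iota)$. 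What actually holds is not equality $Du_n=Du$ but uniform convergence of $Du_n$ on $\mathfrak C$ at rate $\sum_{m\ge n}r_m$.

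Third, your mechanism for the $C^1$ upgrade when $s<1/2$ — invoking super-density of $\mathfrak C$ via Dorronsoro's theorem to force $Du$ to extend continuously — is not what is going on and is unlikely to work. Super-density is a differentiation-basis property of the set $\mathfrak C$ itself; it gives no control over the behavior of $Du$ in the removed annuli, where the cutoff derivative is $\rho_{n}^{-1}$. The correct mechanism, used in Proposition~\ref{prop5.3}, is the direct quantitative bound $\|Du_{\iota+1}-Du_\iota\|_\infty\lesssim \rho_{\iota+1}^{-1}r_\iota^2+r_\iota$, and $u\in C^1_c(\Omega)$ follows from $\sum_\iota \rho_{\iota+1}^{-1}r_\iota^2<\infty$, which in turn is exactly the inequality that the choice $\rho_j\sim j^{-2/(1-s)}2^{-Bj/(1-s)}$ (note the polynomial factor, needed to sum $2^{Bj}\rho_j^{1-s}$) reduces to the threshold $s<1/2$. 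Dorronsoro's super-density enters the paper only in the converse direction, in the Frobenius-type theorems, not in the Lusin-type construction.
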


This Lusin-for-gradient-type result allows us to construct $C^1$ surfaces tangent to general distributions of $k$-planes in $\R^n$ with varying degree of regularity for its tangent field.

{\small
\begin{parag*}[Acknowledgements]
Part of this research was carried out during visits
of the second and third authors at the Mathematics Department in Pisa.
These were supported by the University of Pisa through 
the 2015 PRA Grant ``Variational methods for geometric problems'', by the 2018 INdAM-GNAMPA 
project ``Geometric Measure Theoretical approaches to Optimal Networks'' and by PRIN project MUR 2022PJ9EFL ``Geometric Measure Theory: Structure of Singular Measures, Regularity Theory and Applications in the Calculus of Variations''.
   
The research of G.A. has been partially supported
by the Italian Ministry of Education, University and Research (MIUR) 
through the PRIN project 2010A2TFX2
The research of A. Ma has received funding from the European Union's Horizon 2020 research and innovation programme under the grant agreement No. 752018 (CuMiN), STARS@unipd research grant ``QuASAR -- Questions About Structure And Regularity of currents'' (MASS STARS MUR22 01), National Science Foundation under Grant No. DMS-1926686 and INdAM project ``VAC\&GMT''.  The research of A. Me has has received funding from the European Union's Horizon 2020 research and innovation programme under the Marie Sk\l odowska-Curie grant agreement no 101065346.

\smallskip

The authors wish to thank Tuomas Orponen for precious comments. 
\end{parag*}
}

\section{Notation and preliminary results}
\label{sec:not}
Here is a list of frequently used notations:
\begin{itemize}
[leftmargin=50pt, itemsep=4 pt plus 1pt, labelsep=10pt]

\item[{$\lvert \cdot\rvert$}] Euclidean norm;


\item[$B(x,r)$] Euclidean open ball centerd at $x$ with radius $r$.

\item[${\text{diam}}$] diameter of a set with respect to the distance $d$. If $d=\lvert\cdot\rvert$ is the Euclidean metric we drop the subscript.
 





\item[{$\Leb^n$}]
Lebesgue measure on $\R^n$;

\item[{$\Haus^\alpha$}]
$\alpha$-dimensional Hausdorff measure on $\R^n$;

\item[{$\Tan(S,x)$}] tangent plane to the surface $S$ at the point $x$;



\item[$\M(n,k)$]
set of linear maps from $\R^n$ to $\R^k$;

\item[$\Gr(k,n)$] $k$-dimensional Grassmanian, 



\item[$\langle v, v'\rangle$]
scalar product of the vectors $v,v'\in\R^n$;








\item[{$[v,v']$}]
Lie bracket of vector fields $v$ and $v'$ (\S\ref{def:lie});

%
%
%


\item[{$N(V)$}]
non-involutivity set of a distribution of $k$-planes $V$ (\S\ref{def:inv});

\end{itemize}

\medskip

\subsection{Fractional Sobolev spaces: regularity and representation}

\begin{parag}[H\"older spaces]
\label{def:Hol}
Let $\Omega$ be an open subset of $\mathbb{R}^n.$ 
Assume that $\alpha\in(0,1).$ 
For any function $f:\Omega\to\mathbb{R},$ we define its $\alpha$-H\"older seminorm as
\[
[f]_\alpha := \sup_{x\neq y \in \mathrm{cl}(\Omega)} 
\frac{\lvert f(x)-f(y)\rvert}{\lvert x-y\rvert^\alpha},
\]
and we set $\lVert f\rVert_\alpha := [f]_\alpha + \lVert f\rVert_\infty$.
Define
\[
C^{\alpha}(\Omega) := 
\bigl\{ f\in{C}(\Omega) : \lVert f\rVert_\alpha < \infty\bigr\}.
\]
It is straightforward to verify that the normed space $\bigl(C^{\alpha}(\Omega),\lVert \cdot\rVert_\alpha\bigr)$ is complete and commonly referred to as the space of $\alpha$-H\"older continuous functions.
\end{parag}

\begin{parag}[Sobolev-Slobodeckij spaces]
\label{Walphauno}
Let $\Omega$ be an open subset of $\mathbb{R}^n.$ 
Assume $s\in (0,1]$ and $p\in [1,\infty].$
For any measurable map $v:\Omega\to\mathbb{R}^{m},$ we denote by $[v]_{W^{s,p}(\Omega)}$ the seminorm
\[
[v]_{W^{s,p}(\Omega)}^p 
:= \int_{\Omega}\int_{\Omega}
\frac{\lvert v(x)-v(y)\rvert^p}{\lvert x-y\rvert^{sp + n}}\,dx\,dy.
\]
As usual, we denote by $W^{s,p}(\Omega)$ the following complete metric space:
\[
W^{s,p}(\Omega) := \bigl\{v\in L^p(\Omega,\R^m) : [u]_{W^{s,p}(\Omega,\R^m)} < \infty\bigr\}.
\]
Notice that if $p=\infty$ then $W^{s,p}(\Omega,\R^m)=C^{s}(\Omega,\R^m)$.
\end{parag}

The following theorem is a beautiful consequence of approximate differentiability of Besov functions that was obtained by J. Dorronsoro.

\begin{theorem}[{\cite[Theorem 2]{dorronsoro}}] \label{dorronsorox}
Let $s\in(0,1]$, $p\in[1,\infty)$, with $s\leq n/p$ and let $b<s$. Then, for every $v\in W^{s,p}(\R^n)$, we have 
    $$\lim_{t\to 0} t^{-b} \Big(\fint_{\lvert y\rvert\leq t}\lvert v(x+y)-v(x)\rvert^{p^*}d\Leb^n(y)\Big)^{\frac{1}{p^*}}=0\qquad\text{for $\Leb^n$-almost every }x\in \R^n.$$
\end{theorem}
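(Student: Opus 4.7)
The strategy is to reduce the statement to a pointwise inequality controlling the left-hand side by a Hardy-Littlewood maximal function of a \emph{fractional gradient} of $v$. To this end, define
$$g(y) := \Bigl(\int_{\R^n} \frac{|v(y)-v(z)|^p}{|y-z|^{sp+n}}\,\de z\Bigr)^{1/p},$$
so that Fubini yields $\|g\|_{L^p(\R^n)}^p=[v]_{W^{s,p}(\R^n)}^p$, and hence $g\in L^p(\R^n)$. Denoting by $\mathcal{M}$ the Hardy-Littlewood maximal operator, the function $\mathcal{M}(g^p)$ is finite $\Leb^n$-almost everywhere by the weak-$(1,1)$ maximal inequality. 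In what follows $p^*$ stands for the Sobolev conjugate $np/(n-sp)$ when $sp<n$; the critical case $sp=n$ is handled by the same argument with $p^*$ replaced by any fixed finite exponent, using the embedding $W^{s,p}_{\mathrm{loc}}\hookrightarrow L^q_{\mathrm{loc}}$ for every $q<\infty$.

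The core of the argument is the pointwise estimate
$$\Bigl(\fint_{B(x,t)} |v(y)-v(x)|^{p^*}\,\de y\Bigr)^{1/p^*} \leq C\,t^{s}\,\bigl(\mathcal{M}(g^p)(x)\bigr)^{1/p}, \qquad(\ast)$$
valid for $\Leb^n$-almost every $x\in\R^n$ and every $t>0$. Granting $(\ast)$, multiplying by $t^{-b}$ yields $C\,t^{s-b}\,(\mathcal{M}(g^p)(x))^{1/p}$, which tends to zero as $t\to 0$ at every $x$ where the maximal function is finite, since $s-b>0$; this is precisely the assertion of the theorem.

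To prove $(\ast)$, I would split the left-hand side via the triangle inequality into the $L^{p^*}$-oscillation of $v$ around its ball-average $v_{B(x,t)}$ and the drift $|v_{B(x,t)}-v(x)|$. For the oscillation, the fractional Sobolev-Poincaré inequality on balls gives
$$\Bigl(\fint_{B(x,t)} |v-v_{B(x,t)}|^{p^*}\,\de y\Bigr)^{1/p^*} \leq C\,t^{s}\,\Bigl(\fint_{B(x,t)} g^p\,\de y\Bigr)^{1/p}\leq C\,t^{s}\,(\mathcal{M}(g^p)(x))^{1/p},$$
after using the Fubini-type bound $[v]_{W^{s,p}(B(x,t))}^p\leq \int_{B(x,t)} g^p$ to replace the local Gagliardo seminorm by the $L^p$-mass of $g$ on the ball. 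For the drift term, at any Lebesgue point $x$ of $v$ I would telescope dyadically over $t_j=2^{-j}t$: each increment $|v_{B(x,t_j)}-v_{B(x,t_{j+1})}|$ is controlled, up to a dimensional constant, by the $L^1$-oscillation of $v$ on $B(x,t_j)$, hence by Jensen by the $L^{p^*}$-oscillation, to which the Sobolev-Poincaré bound at scale $t_j$ applies; the resulting geometric series $\sum_j (2^{-j}t)^s$ sums to $C\,t^s\,(\mathcal{M}(g^p)(x))^{1/p}$, and combining the two contributions yields $(\ast)$. The only genuinely delicate step is the fractional Sobolev-Poincaré inequality on balls with the correct $t^s$-scaling, which follows from the continuous embedding $W^{s,p}(B_1)\hookrightarrow L^{p^*}(B_1)$ on the unit ball together with a precise dilation argument tracking the behaviour of the kernel $|y-z|^{-(sp+n)}$ under scaling.
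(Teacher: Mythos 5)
The paper does not prove this statement; it cites \cite[Theorem 2]{dorronsoro}, whose original argument goes through polynomial approximation and a Besov-space characterization. Your proposal instead gives a self-contained and more elementary argument via a fractional Sobolev--Poincar\'e inequality and the Hardy--Littlewood maximal function, which is a standard and legitimate alternative route. The core estimate $(\ast)$ is correct in the regime $sp<n$: the local seminorm bound $[v]_{W^{s,p}(B)}^p\le\int_B g^p$ is immediate from the definition of $g$; the scaling of the unit-ball Sobolev--Poincar\'e inequality does produce the factor $t^{s-n/p+n/p^*}=t^0$, and hence the $t^s$ appears after passing to averages, as you say; the dyadic telescoping for the drift term converges at Lebesgue points and sums to a geometric series; and the conclusion follows from $t^{s-b}\to 0$ at every point where $\mathcal M(g^p)$ is finite, which is $\Leb^n$-a.e.\ because $g^p\in L^1$. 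In fact your argument yields the stronger uniform bound $t^{-s}(\fint_{B(x,t)}|v(x+y)-v(x)|^{p^*}\,dy)^{1/p^*}\le C(\mathcal M(g^p)(x))^{1/p}$ for all $t>0$, of which the stated limit (for $b<s$) is an immediate consequence; what Dorronsoro actually proves is the sharper endpoint $b=s$ (with limit zero), which your maximal-function bound does not give.

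There is one genuine gap: the critical case $sp=n$ is not handled. When $sp=n$ the Sobolev conjugate is $p^*=\infty$, and the paper's Remark following the theorem states the conclusion as an $L^\infty$ statement in the modulus-of-continuity sense. Your fix, ``replace $p^*$ by any finite $q$'', proves the limit for every fixed finite exponent but does \emph{not} imply the $L^\infty$ version: a bound on every $L^q$ oscillation with constants blowing up as $q\to\infty$ does not upgrade to an essential-sup bound, and $W^{s,p}$ with $sp=n$ embeds only into $\mathrm{BMO}$ (or exponential classes), not into $L^\infty$ or $C^\gamma$. So the argument as written covers the regime $sp<n$ cleanly (which is the only regime the paper actually uses, since there $p=1$, $s<1$ and $n=k\ge 2$), but the borderline case requires a separate argument if one wants the statement in the generality in which it is quoted.
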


\begin{remark}
    In case $sp=n$, then $p^*=\infty$ and the above formula has to be understood as
    $$\lim_{t\to 0}t^{-b}\sup_{\lvert y\rvert\leq t}\lVert v(\cdot+y)-v(\cdot)\rVert_\infty=0.$$
\end{remark}

Specializing Theorem \ref{dorronsorox} to the case in which the function $v$ is the indicator function of some Borel set $E$, we get the following \emph{super-density} result for $E$.

\begin{proposition}\label{dorronsorosuper-density}
Let $s\in(0,1)$, with $s\leq n$ and let $B$ be any open ball in $\R^n$. Assume further that $E$ is a $\Leb^n$-measurable set such that $\mathbb{1}_E\in W^{s,1}(B)$. Then, for every $0\leq b<s$ we have
$$\lim_{\rho\to 0}\frac{\Leb^n(B(x,\rho)\setminus E)}{\rho^{n+b1^*}}=0,\qquad \text{for $\Leb^n$-almost every $x\in E$,}$$
where as usual $1^*=n/(n-s)$ denotes the Sobolev exponent associated to $n,s$ and the expo $1$.
\end{proposition}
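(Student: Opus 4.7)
The strategy is to apply Dorronsoro's theorem (Theorem \ref{dorronsorox}) with $p=1$ to the indicator function $v = \mathbb{1}_E$, exploiting the fact that $\mathbb{1}_E$ is $\{0,1\}$-valued so $|\mathbb{1}_E(x+y) - \mathbb{1}_E(x)|^{1^*} = |\mathbb{1}_E(x+y) - \mathbb{1}_E(x)|$ for any exponent.

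First I would handle the mild issue that Theorem \ref{dorronsorox} is stated on all of $\R^n$ whereas the hypothesis only gives $\mathbb{1}_E \in W^{s,1}(B)$. Fix any ball $B' \Subset B$. Multiplying $\mathbb{1}_E$ by a smooth cutoff supported in $B$ and equal to $1$ on $B'$ produces a function in $W^{s,1}(\R^n)$ (by the standard product/extension estimates for Sobolev--Slobodeckij seminorms on Lipschitz domains) that agrees with $\mathbb{1}_E$ on $B'$. Covering $B$ by a countable family of such $B'$ then shows it suffices to prove the stated density conclusion at almost every point of $E \cap B'$, so without loss of generality I may assume $\mathbb{1}_E \in W^{s,1}(\R^n)$.

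Now I apply Theorem \ref{dorronsorox} with $p=1$ (the hypothesis $s \leq n/p = n$ is given) to $v = \mathbb{1}_E$; the critical exponent is $1^* = n/(n-s)$. Fix $b \in [0,s)$. For $\Leb^n$-a.e.\ $x \in E$ one has $\mathbb{1}_E(x) = 1$ and hence
\[
|\mathbb{1}_E(x+y) - 1|^{1^*} \;=\; |\mathbb{1}_E(x+y) - 1| \;=\; \mathbb{1}_{E^c}(x+y),
\]
since the quantity takes only the values $0$ and $1$. Consequently
\[
\fint_{|y|\leq t} |\mathbb{1}_E(x+y) - \mathbb{1}_E(x)|^{1^*}\, d\Leb^n(y)
\;=\; \frac{\Leb^n(B(x,t) \setminus E)}{\omega_n\, t^n}.
\]
Dorronsoro's conclusion therefore yields, for a.e.\ $x \in E$,
\[
\lim_{t \to 0} \; t^{-b} \left( \frac{\Leb^n(B(x,t) \setminus E)}{\omega_n\, t^n} \right)^{1/1^*} \;=\; 0.
\]

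Raising both sides to the power $1^*$ (a harmless monotone operation) gives
\[
\lim_{t \to 0} \; \frac{\Leb^n(B(x,t) \setminus E)}{t^{\,n + b\, 1^*}} \;=\; 0,
\]
which is the desired super-density statement. The proof is essentially a one-line translation once Dorronsoro is invoked; the only non-trivial step is the localization argument allowing the application of a theorem stated on $\R^n$ to a function defined only on $B$, and this is routine via cutoffs.
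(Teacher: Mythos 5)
Your proof is correct and takes essentially the same route as the paper: both apply Dorronsoro's Theorem~\ref{dorronsorox} with $p=1$ to (an extension of) $\mathbb{1}_E$, exploit that the indicator is $\{0,1\}$-valued so that $|\mathbb{1}_E(x+y)-1|^{1^*}=\mathbb{1}_{E^c}(x+y)$, and raise the resulting limit to the power $1^*$. The only minor difference is in the localization step: you use a smooth cutoff and extension by zero, whereas the paper invokes an extension theorem ({\cite[Theorem 5.4]{HGfractionalsobolev}}) to get $u\in W^{s,1}(\R^n)$ with $u=\mathbb{1}_E$ on $B$; both are routine, and in each case one must note (as you implicitly do) that the extended function agrees with $\mathbb{1}_E$ on small balls about interior points, which is what makes the Dorronsoro conclusion translate back to the original indicator.
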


\begin{remark}
    The above result shows that an increase in the regularity of a set implies high density locally, or more specifically an improved Lebesgue's differentiation theorem. 
\end{remark}

\begin{proof}
By \cite[Theorem 5.4]{HGfractionalsobolev}, we can extend $\mathbb{1}_E$ to a function $u\in W^{s,1}(\R^n)$ and by Theorem \ref{dorronsorox}, we know that
for $\Leb^n$-almost every $x\in \R^n$ we have 
\begin{equation}
    \label{dorronsoroimplic}
    \lim_{\rho\to 0}\rho^{-(n+b\,1^*)}\int_{B(0,\rho)}\lvert u(x+h)-u(x)\rvert^{1^*}\, d\Leb^n(h)=0.
\end{equation}
Notice however, that whenever $x\in B$, we have 
\begin{equation}
\begin{split}
      \lim_{\rho\to 0}\rho^{-(n+b\,1^*)}\int_{B(0,\rho)}\lvert u(x+h)-&u(x)\rvert^{1^*}\, d\Leb^n(h)\\
    =
    \lim_{\rho\to 0}\rho^{-(n+b\,1^*)}&\int_{B(0,\rho)}\lvert \mathbb{1}_E(x+h)-\mathbb{1}_E(x)\rvert^{1^*}\, d\Leb^n(h).
    \nonumber
\end{split}
\end{equation}
In particular, this in turn implies that for $\Leb^n$-almost all $x\in E$ one has
\begin{equation}
    \begin{split}
        & \lim_{\rho\to 0}\frac{\Leb^n(B(x,\rho)\setminus E)}{\rho^{n+b\, 1^*}}\\
&\qquad\qquad\qquad=\lim_{\rho\to 0}\rho^{-(n+b\,1^*)}\int_{B(0,\rho)}\lvert \mathbb{1}_E(x+h)-\mathbb{1}_{B(x,\rho)}(x+h)\rvert\, d\Leb^n(h)\\
         &\qquad\qquad\qquad= \lim_{\rho\to 0}\rho^{-(n+b\,1^*)}\int_{B(0,\rho)}\lvert \mathbb{1}_E(x+h)-1\rvert^{1^*}\, d\Leb^n(h)\overset{\eqref{dorronsoroimplic}}{=}0.
         \nonumber
    \end{split}
\end{equation}
This concludes the proof.
\end{proof}

We now state a simple Poincare-type inequality for fractional Sobolev functions. 

\begin{proposition}\label{poincarefractional}
    Let $\alpha\in (0,1)$, $q\in [1,\infty]$ with $\alpha q>n$. There exists a constant $c>0$ depending only on $\alpha$, $q$ and $n$ such that for every $R>0$ we have 
    $$\lVert u\rVert_{L^1(B(0,R))}\leq cR^{n(1-1/q)+\alpha}[u]_{W^{\alpha,q}(B(0,R))},$$
    whenever $u=0$ on $\partial B(0,R)$.
\end{proposition}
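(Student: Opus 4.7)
The plan is to exploit the fact that in the Morrey regime $\alpha q>n$, every function in $W^{\alpha,q}(B(0,R))$ admits a $(\alpha-n/q)$-H\"older continuous representative whose H\"older seminorm is controlled by the Gagliardo seminorm alone. Once this is available, the hypothesis $u=0$ on $\partial B(0,R)$ yields a pointwise bound on $|u(x)|$ in terms of the distance to the boundary, which integrates trivially to the desired $L^1$ estimate.

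The first step is a rescaling reduction to $R=1$. Setting $v(y):=u(Ry)$, a direct computation shows that $\lVert u\rVert_{L^1(B(0,R))}=R^n\lVert v\rVert_{L^1(B(0,1))}$ and $[u]_{W^{\alpha,q}(B(0,R))}=R^{n/q-\alpha}[v]_{W^{\alpha,q}(B(0,1))}$. Since $n(1-1/q)+\alpha+n/q-\alpha=n$, the claimed inequality for $u$ is equivalent to the same inequality for $v$ on $B(0,1)$, so it suffices to treat the unit ball. On $B(0,1)$, the fractional Morrey embedding (see for instance the proof of Theorem 8.2 in Di Nezza--Palatucci--Valdinoci's \emph{Hitchhiker's guide to fractional Sobolev spaces}) yields, after modification on a null set, a continuous representative satisfying
\[
|v(x)-v(y)|\;\leq\;C_{\alpha,q,n}\,|x-y|^{\alpha-n/q}\,[v]_{W^{\alpha,q}(B(0,1))}\qquad\forall\,x,y\in B(0,1),
\]
with $C_{\alpha,q,n}$ depending only on $\alpha,q,n$. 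The endpoint case $q=\infty$ is automatic, since $W^{\alpha,\infty}=C^{\alpha}$ and the inequality above is simply the definition of the H\"older seminorm.

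To conclude, for each $x\in B(0,1)\setminus\{0\}$ I apply the above estimate to the pair $(x,y_x)$, where $y_x:=x/|x|\in\partial B(0,1)$ is the radial projection of $x$ onto the boundary, so that $|x-y_x|=1-|x|\leq 1$ and $v(y_x)=0$ by hypothesis (the point $x=0$ being negligible for integration). Since $\alpha-n/q>0$, this yields the pointwise bound
\[
|v(x)|\;\leq\;C_{\alpha,q,n}\,(1-|x|)^{\alpha-n/q}\,[v]_{W^{\alpha,q}(B(0,1))}\;\leq\;C_{\alpha,q,n}\,[v]_{W^{\alpha,q}(B(0,1))},
\]
and integration over $B(0,1)$ followed by undoing the rescaling produces the claim. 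The only nontrivial ingredient is the seminorm-only fractional Morrey estimate; once it is granted, the argument is essentially pointwise, and the assumption $u=0$ on $\partial B(0,R)$ is unambiguous because in the Morrey regime the boundary trace is realized by the continuous representative itself.
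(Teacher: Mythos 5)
Your proof is correct, and it takes a genuinely different route from the paper's. The paper proves the case $R=1$ by contradiction: it assumes a normalizing sequence $u_i$ with $\lVert u_i\rVert_{L^1}=1$ and $[u_i]_{W^{\alpha,q}}\to 0$, extracts an $L^1$-convergent subsequence via the fractional Rellich--Kondrachov compactness theorem, uses the Morrey embedding together with the vanishing boundary condition to upgrade this to a limit that is $C^{\alpha-n/q}$ up to the boundary and vanishes on $\partial B(0,1)$, and then applies Fatou's lemma to conclude the limit is constant, hence zero --- contradicting the normalization. Your argument is direct: you invoke the seminorm-only form of the fractional Morrey estimate, compare each interior point with its radial projection onto the sphere where $v$ vanishes, and integrate the resulting pointwise bound. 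Both use Morrey as the key ingredient and the same rescaling step, but yours dispenses with compactness and Fatou entirely and in principle yields an explicit constant, which the contradiction argument does not. It also handles $q=\infty$ transparently (where the compactness theorem the paper cites is stated for finite $p$), since there the Gagliardo seminorm \emph{is} the H\"older seminorm.

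The one place worth being careful is the claim that the H\"older seminorm of $v$ on $\overline{B(0,1)}$ is controlled by the Gagliardo seminorm alone, with no $L^q$ contribution. You are right that this is what the proof of the Morrey embedding actually delivers (the $L^q$ norm only enters when bounding $\lVert v\rVert_{C^0}$), and you correctly flag ``the proof of'' Theorem 8.2 rather than its statement, which as printed bounds the full $C^{0,\alpha}$ norm by the full $W^{s,p}$ norm. You also implicitly use that the H\"older estimate persists at the boundary point $y_x\in\partial B(0,1)$; this is fine because a ball is an extension domain and the Gagliardo-controlled H\"older bound extends by continuity to the closure.
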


\begin{remark}
    Let us observe that although writing $u=0$ on $\partial B(0,R)$ is imprecise since functions in $W^{\alpha,q}(B(0,1))$ are defined only $\Leb^n$-almost everywhere. However, Morrey's inequality, see \cite[Theorem 8.2]{HGfractionalsobolev} guarantees that if $\alpha  q>n$, then $u$ must have a $C^{\alpha-n/q}(B(0,1))$ representative and hence the trace at $\partial B(0,1)$ is just determined by a restriction of any $C^{\alpha-n/q}$-extension of (the continuous representative of) $u$ to $\partial B(0,1)$. 
\end{remark}

\begin{proof}
As a first step, let us assume $R=1$.    By contradiction suppose that this is not the case and that there exists a sequence of continuous functions $u_i\in W^{\alpha,q}(B(0,1))$ such that $u_i=0$ on $\partial B(0,1)$, $\lVert u_i\rVert_{L^1(B(0,1))}=1$ and 
    $$[u_i]_{W^{\alpha,q}(B(0,1))}\leq i^{-1}\lVert u_i\rVert_{L^1(B(0,1))}=i^{-1}.$$
    Thanks to \cite[Theorem 7.1]{HGfractionalsobolev} we know that up to subsequences there exists a function $u\in L^1(B(0,1))$ such that $u_i$ converges to $u$ in $L^1(B(0,1))$. Let us further notice that thanks to \cite[Theorem 8.2]{HGfractionalsobolev} the sequence $u_i$ is uniformly equicontinuous and equibounded, thanks to our assumption that $u_i=0$ on $\partial B(0,1)$. Therefore the function $u$ is also $C^{\alpha-n/q}$ and $u=0$ on $\partial B(0,1)$. 

 Therefore, Fatou's lemma implies that 
    $$[u]_{W^{\alpha,q}(B(0,1))}\leq \liminf_{i\to \infty}[u_i]_{W^{\alpha,q}(B(0,1))}=0.$$
    This shows that $u$ is constant in $B(0,1)$. However, the continuity of $u$ and the fact that $u=0$ on $\partial B(0,1)$, implies that $u=0$ in $B(0,1)$. This is in contradiction with the fact that $\lVert u\rVert_{L^1(B(0,1))}=1$. 

Let us conclude the proof by rescaling. If $u\in W^{\alpha,q}(B(0,R))$ then
\begin{equation}
\begin{split}
     &\qquad\qquad\qquad\qquad\int_{B(0,R)}\lvert u(z)\rvert dz= R^n\int_{B(0,1)}\lvert u(Rw)\rvert dw\\
    \leq& cR^n\Big(\int_{B(0,1)} \int_{B(0,1)} \frac{\lvert u(Rw)-u(Rz)\rvert^q}{\lvert w-z\rvert^{n+\alpha q}}dz\,dw\Big)^\frac{1}{q}= cR^{n(1-1/q)+\alpha}[u]_{W^{\alpha,q}(B(0,R))}.
    \nonumber
\end{split}
\end{equation}
This concludes the proof.
\end{proof}

\begin{definition}
    For every $1\leq k\leq n$, we denote by $\mathrm{Gr}(k,n)$ the Grassmannian of $k$-dimensional planes in $\R^n$. In addition, we denote by $\gamma_{k,n}$ the Radon measure on $\mathrm{Gr}(k,n)$ that is invariant under the action of the orthogonal group $O(n)$. For the existence of such a measure we refer to \cite[\S 3.5]{Mattila1995GeometrySpaces}.
\end{definition}

The following is a technical proposition that will be employed in the proof of Proposition \ref{restrictionW1,p}.

\begin{proposition}\label{prop:polarhighdim}
Let $1\leq k\leq n$. Then, there exists a constant $c_{k,n}$ such that for every positive Borel function $f:\R^n\to \R$, we have
$$\int f(z)dz=c_{k,n}\int_{V\in\mathrm{Gr}(k,n)}\int f(w)\lvert w\rvert^{n-k}\, d\Haus^{k}\llcorner V(w)\, d\gamma_{k,n}(V).$$ 
\end{proposition}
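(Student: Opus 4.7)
The plan is to evaluate the right-hand side in spherical coordinates on each $k$-plane, exchange the order of integration, and then invoke the uniqueness of the $O(n)$-invariant measure on the unit sphere $S^{n-1}$.

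First, for a fixed $V\in\mathrm{Gr}(k,n)$, I parametrize $V$ in polar coordinates: every $w\in V\setminus\{0\}$ is written as $w=\rho\sigma$ with $\rho>0$ and $\sigma\in V\cap S^{n-1}$, and $d\Haus^k\llcorner V(w)=\rho^{k-1}d\rho\,d\Haus^{k-1}\llcorner(V\cap S^{n-1})(\sigma)$. The factor $|w|^{n-k}=\rho^{n-k}$ combines with $\rho^{k-1}$ into $\rho^{n-1}$, giving
\[
\int f(w)|w|^{n-k}\,d\Haus^k\llcorner V(w)
=\int_0^\infty\rho^{n-1}\int_{V\cap S^{n-1}}f(\rho\sigma)\,d\Haus^{k-1}(\sigma)\,d\rho.
\]

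Second, by Fubini--Tonelli (which applies since $f\geq 0$), I swap the integrals in $V$ and $\rho$, so that the outer integral over $\mathrm{Gr}(k,n)$ acts only on the inner sphere integral. For each $\rho>0$ define the Borel measure on $S^{n-1}$
\[
\nu(E)\;:=\;\int_{\mathrm{Gr}(k,n)}\Haus^{k-1}\bigl(E\cap V\cap S^{n-1}\bigr)\,d\gamma_{k,n}(V),\qquad E\subseteq S^{n-1}\text{ Borel}.
\]
The measure $\nu$ is $O(n)$-invariant because both $\gamma_{k,n}$ and $\Haus^{k-1}$ are $O(n)$-invariant: for any rotation $R$, the change of variables $V'=RV$ and $\sigma'=R\sigma$ gives $\nu(RE)=\nu(E)$. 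Since the rotation-invariant Radon measure on $S^{n-1}$ is unique up to a multiplicative constant, there exists $\kappa_{k,n}\in(0,\infty)$ with $\nu=\kappa_{k,n}\Haus^{n-1}\llcorner S^{n-1}$.

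Third, combining these two steps and using the classical polar coordinate formula on $\R^n$,
\[
\int_{\mathrm{Gr}(k,n)}\!\!\int f(w)|w|^{n-k}d\Haus^k\llcorner V\,d\gamma_{k,n}
=\int_0^\infty\rho^{n-1}\!\!\int_{S^{n-1}}\!\! f(\rho\sigma)\,d\nu(\sigma)\,d\rho
=\kappa_{k,n}\!\int_{\R^n}\!\! f(z)\,dz,
\]
so the identity holds with $c_{k,n}:=\kappa_{k,n}^{-1}$. The main (though mild) obstacle is the rotation-invariance and uniqueness argument in the second step; once $\nu=\kappa_{k,n}\Haus^{n-1}\llcorner S^{n-1}$ is established, the rest is just unwinding the polar decomposition.
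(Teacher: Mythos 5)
Your proof is correct, but it follows a genuinely different route from the paper. The paper works with the measure
\[
\nu(A)=\int_{\mathrm{Gr}(k,n)}\int \mathbb{1}_A(h)\,|h|^{n-k}\,d\Haus^{k}\llcorner V(h)\,d\gamma_{k,n}(V)
\]
defined on all of $\R^n$, first proving $\nu\ll\Leb^n$ via the Mattila estimate $\gamma_{k,n}(\{V:\mathrm{dist}(x,V)\le r\})\lesssim (r/|x|)^{n-k}$, then using $O(n)$-invariance to write $\nu=g(|\cdot|)\Leb^n$ with $g$ radial, and finally a separate scaling (``$n$-cone'') argument to force $g$ to be $0$-homogeneous, hence constant. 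You instead peel off the radial variable at the outset by passing to polar coordinates on each $k$-plane, which reduces everything to a rotation-invariant finite measure on $S^{n-1}$; uniqueness of the $O(n)$-invariant Radon measure on the sphere then immediately gives the constant, and the classical polar coordinate formula on $\R^n$ closes the loop. Your version avoids both the absolute-continuity step and the scaling argument --- the first is rendered unnecessary because the spherical measure $\nu$ is automatically finite (indeed $\nu(S^{n-1})=\Haus^{k-1}(S^{k-1})$ since $\gamma_{k,n}$ is a probability measure), and the second is absorbed into the explicit radial factor $\rho^{n-1}$. The only points worth flagging are the measurability of $(\rho,V)\mapsto\int_{V\cap S^{n-1}}f(\rho\sigma)\,d\Haus^{k-1}(\sigma)$ needed for Tonelli, and the observation $\kappa_{k,n}>0$; both are standard and the paper also leaves analogous points implicit. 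Overall your argument is a bit more direct and requires less outside machinery (no need for the Mattila lemma), at the modest cost of invoking uniqueness of the invariant measure on the sphere explicitly rather than implicitly via radiality of a Radon--Nikodym density.
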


\begin{proof}
As a first step, we prove that the measure $\nu$, defined for every Borel set $A$ by
$$\nu(A)=\int_{V\in\mathrm{Gr}(k,n)}\Bigl(\int \mathbb{1}_A(h)\lvert h\rvert^{n-k}\, d\Haus^{k}\llcorner V(h)\Bigr)d\gamma_{k,n}(V),$$ 
is absolutely continuous with respect to the Lebesgue measure $\Leb^n$. 
Thanks to \cite[Lemma 3.11]{Mattila1995GeometrySpaces}, we know that 
$$\gamma_{k,n}(\{V\in \Gr(k,n):\mathrm{dist}(x,V)\leq r\})\leq 2^{n}\omega_n^{-1}r^{n-k}\lvert x\rvert^{k-n},$$
for every $x\in\R^n\setminus \{0\}$ and every $r\leq \lvert x\rvert$. Thus, it follows that 
\begin{equation}
    \nu(B(x,r))\leq  2^{n}\omega_n^{-1}r^{n-k}\lvert x\rvert^{-(n-k)}\cdot \omega_k2^{n-k}r^k\lvert x\rvert^{n-k}=2^{2n-k}\frac{\omega_k}{\omega_n}r^n,
\end{equation}
which immediately implies that $\nu\ll\Leb^n$. 

In addition, the measure $\nu$ is invariant under the action of the orthogonal group $O(n)$, since $\gamma_{k,n}$ is invariant, see, e.g., \cite[Chapter 3]{Mattila1995GeometrySpaces}.
This invariance implies that there exists a function $g:[0,\infty)\to[0,\infty]$ such that $\nu=g(\lvert \cdot \rvert)\Leb^n$. Furthermore, $\nu$ is an $n$-dimensional cone, i.e., 
$$\nu=\lambda^{-n}T_{0,\lambda}\nu$$ 
for every $\lambda>0$. Indeed, for every Borel set $A$, we have
\begin{equation}
\begin{split}
        \lambda^{-n}\nu(\lambda A)
        &=\lambda^{-n}\int_{V\in\mathrm{Gr}(k,n)}\Bigl(\int \mathbb{1}_{A}(\lambda^{-1}h)\lvert h\rvert^{n-k}\, d\Haus^{k}\llcorner V(h)\Bigr)d\gamma_{k,n}(V)\\
         &=\lambda^{-n}\int_{V\in\mathrm{Gr}(k,n)}\Bigl(\int \mathbb{1}_{A}(\lambda^{-1}h)\lambda^{n-k}\lvert \lambda^{-1}h\rvert^{n-k}\, d\Haus^{k}\llcorner V(h)\Bigr)d\gamma_{k,n}(V)\\
         &=\int_{V\in\mathrm{Gr}(k,n)}\Bigl(\int \mathbb{1}_{A}(w)\lvert w\rvert^{n-k}\, d\frac{T_{0,\lambda}\Haus^{k}\llcorner V}{\lambda^k}(w)\Bigr)d\gamma_{k,n}(V)=\nu(A).
         \nonumber
\end{split}
\end{equation}
The above computation shows that $g$ is a $0$-homogeneous function on $[0,\infty)$, and thus $g$ is constant. This implies that $\nu=\nu(B(0,1))\Leb^n$, and since $\nu(B(0,1))$ depends only on $n$ and $k$, the proof of the proposition is complete. 
\end{proof}

In this next proposition, we establish a representation formula for the Sobolev-Slobodeckij seminorm via slicing.

\begin{proposition}\label{restrictionW1,p}
Let \(s\in(0,1]\), $p\in [1,\infty)$ and suppose \(u\in W^{s,p}(\R^n)\). Then, for \(\gamma_{k,n}\)-almost every \(V\in\mathrm{Gr}(k,n)\) and for almost every \(z\in V^\perp\), the restriction \(u\vert_{z+V}\) of \(u\) to the affine plane \(z+V\) belongs to \(W^{s,p}(z+V)\) and there exists a constant $c_{k,n}>0$ such that 
\begin{equation}
    [u]_{W^{s,p}(\R^n)}^p = c_{k,n}\int_{V\in\mathrm{Gr}(k,n)} \int_{z\in V^\perp} [u\vert_{z+V}]_{W^{s,p}(z+V)}^p\, d\Haus^{n-k}\llcorner V^\perp(z)\, d\gamma_{k,n}(V).\nonumber
\end{equation}
\end{proposition}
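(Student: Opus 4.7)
The plan is to derive the identity by a two-step Fubini computation combined with the Grassmannian polar formula of Proposition~\ref{prop:polarhighdim}. The key observation is that the slicewise seminorm on the right, once expanded, is a triple integral over $V^\perp \times V \times V$ whose variables can be rearranged to separate the \emph{difference variable} $h \in V$ from the \emph{base point} $x \in \R^n$; then one integrates in $V \in \Gr(k,n)$ and uses Proposition~\ref{prop:polarhighdim} to reassemble the full Euclidean integral in $h$.

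Concretely, I would fix $V \in \Gr(k,n)$ and parametrize $z+V$ by $v \in V$ via $x = z+v$. Writing $x_1 = z+v_1$ and $x_2 = z+v_2$ and performing the unimodular change of variables $(v_1, v_2) \mapsto (v, h) := (v_2, v_1 - v_2)$ on $V \times V$, the slice seminorm becomes
\begin{equation}
\int_{z\in V^\perp} [u|_{z+V}]_{W^{s,p}(z+V)}^p \, d\Haus^{n-k}(z)
= \int_{V^\perp}\!\!\int_{V}\!\!\int_{V} \frac{|u(z+v+h) - u(z+v)|^p}{|h|^{sp+k}} \, d\Haus^k(h) \, d\Haus^k(v) \, d\Haus^{n-k}(z). \nonumber
\end{equation}
The orthogonal decomposition $\R^n = V \oplus V^\perp$ and standard Fubini convert the integration in $(v,z)$ against $d\Haus^k \otimes d\Haus^{n-k}$ into Lebesgue integration in $x = z+v \in \R^n$, so the last display equals
\begin{equation}
\int_{V} \frac{1}{|h|^{sp+k}}\left(\int_{\R^n} |u(x+h)-u(x)|^p \, dx\right) d\Haus^k(h). \nonumber
\end{equation}

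Now integrate against $\gamma_{k,n}$ over $\Gr(k,n)$ and apply Proposition~\ref{prop:polarhighdim} to the nonnegative Borel function $f(h) := |h|^{-(sp+n)} \int_{\R^n} |u(x+h)-u(x)|^p \, dx$. The weight $|h|^{n-k}$ appearing in the polar formula combines with $|h|^{-(sp+n)}$ to give exactly $|h|^{-(sp+k)}$, so
\begin{equation}
c_{k,n} \int_{\Gr(k,n)} \int_{V^\perp} [u|_{z+V}]_{W^{s,p}(z+V)}^p \, d\Haus^{n-k}(z) \, d\gamma_{k,n}(V)
= \int_{\R^n} \frac{1}{|h|^{sp+n}} \int_{\R^n} |u(x+h)-u(x)|^p \, dx \, dh. \nonumber
\end{equation}
The right-hand side is $[u]_{W^{s,p}(\R^n)}^p$ after the translation $y = x+h$, which yields the stated identity. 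The almost-everywhere membership $u|_{z+V} \in W^{s,p}(z+V)$ then follows from the finiteness of the integrand for $(\gamma_{k,n} \otimes \Haus^{n-k}\llcorner V^\perp)$-almost every $(V,z)$, which is automatic since $u \in W^{s,p}(\R^n)$.

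There is no real analytic obstacle; the proof is essentially bookkeeping. The only step that requires a bit of care is matching the homogeneity factor $|h|^{n-k}$ supplied by Proposition~\ref{prop:polarhighdim} with the kernel $|h|^{-(sp+k)}$ living on the $k$-plane and the ambient kernel $|h|^{-(sp+n)}$; once one checks that the exponents align, the constant $c_{k,n}$ in the statement can be taken to be exactly the one furnished by Proposition~\ref{prop:polarhighdim}.
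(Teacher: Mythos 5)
Your proof is correct and is essentially the paper's argument: both hinge on Proposition~\ref{prop:polarhighdim} applied to $f(h)=\lvert h\rvert^{-(sp+n)}\int \lvert u(x+h)-u(x)\rvert^p\,dx$ together with a Fubini decomposition $x=z+v\in V^\perp\oplus V$. The only difference is cosmetic: you build the triple integral from the slicewise seminorm up to the Euclidean one, while the paper expands the Euclidean seminorm down to the triple integral; the chain of equalities and the constant $c_{k,n}$ are the same.
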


\begin{proof}
Thanks to Proposition \ref{prop:polarhighdim} we have that 
\begin{equation}
    \begin{split}
        &[u]_{W^{s,p}(\R^n)}^p =\int \int \frac{\lvert u(x)-u(y)\rvert^p}{\lvert x-y\rvert^{n+sp}}\,dx\,dy=\int \Big(\int \lvert u(x+h)-u(x)\rvert^p\,dx\Big)\, \lvert h\rvert^{-sp-n}\,dh\\
    &\qquad=c_{k,n}\int_{V\in\mathrm{Gr}(k,n)}\int_V \Big(\int \lvert u(x+w)-u(x)\rvert^p\,dx\Big)\lvert w\rvert^{-k-sp}\, dw\, d\gamma_{k,n}(V)\\
&\qquad=c_{k,n}\int\int_{V^\perp}\Big(\int_V \int_V\frac{\lvert u(y+z+h)-u(y+z)\rvert^p}{\lvert h\rvert^{k+sp}} \,dz \,dh\Big) \,dy d\gamma_{k,n}(V),
    \nonumber
    \end{split}
\end{equation}
where in the identity in the second line we used Proposition \ref{prop:polarhighdim} with $f(h):=\int \lvert u(x+h)-u(x)\rvert^p\,dx$ and in the last line we used Fubini's theorem on the variable $x$ splitting it in $x=z+y\in V\oplus V^\perp$ and then rearranging integrals. In order to keep formulas compact in the above identities we used with abuse of notation the symbol $\int_V dz$ to denote the integral with respect to the $k$-dimensional Hausdorff measure onto $V$.

Since $u\in W^{s,p}(\R^n)$, Fubini's theorem and the above computation imply that 
\[
[u\vert_{y+V}]_{W^{s,p}(y+V)}^p := \int\int \frac{\lvert u(y+z+h)-u(y+z)\rvert^p}{\lvert h\rvert^{k+sp}}\,d\Haus^{k}\llcorner V(z)\,d\Haus^{k}\llcorner V(h) < \infty,
\]
for \(\gamma_{k,n}\)-almost every \(V\in\mathrm{Gr}(k,n)\) and for \(\Haus^{n-k}\llcorner V^\perp\)-almost every \(y\in V^\perp\).  This concludes the proof.
\end{proof}

We conclude this first subsection introducing a class of fractional Sobolev functions on rectifiable sets.

\begin{definition}\label{def:rectifiable}
    A Borel set $\Gamma$ is said to be $k$-rectifiable if there are countably many $C^1$-graphs $f_i:K_i\Subset \R^k\to \R^n$ such that 
    \begin{equation}
    \Haus^k(\Gamma\setminus \bigcup_{i\in\N} f_i(K_i))=0.
    \label{rectdeg}
    \end{equation}
    Similarly, we say that a Borel set $\Gamma$ is $(k,\mathscr{A})$-rectifiable if given a family $\mathscr{A}$ of maps there are countably many maps $f_i$ in $\mathscr{A}$ such that \eqref{rectdeg} holds.
\end{definition}

\begin{parag}[Sobolev functions on rectifiable sets]
\label{parag:def:sobolev:surface}
Let $\Gamma$ be a $k$-dimensional rectifiable set.
Given a measurable function $u:\Gamma\to\mathbb{R},$ we say that $u$ is in $W^{s,p}(\Gamma)$ if $u\in L^p(\Haus^k\llcorner \Gamma)$ and
\[
[u]_{W^{s,p}(\Gamma)}^p
:= \int \int 
\frac{\lvert u(x)-u(y)\rvert^p}{\lvert x-y\rvert^{sp + k}}
\,d\Haus^k\llcorner \Gamma(x)\,d\Haus^k\llcorner \Gamma(y)<\infty
\]
\end{parag}

\begin{proposition}\label{equivalenza}
Let $V\in \Gr(k,n)$ and let $U$ be a relatively open subset of the $k$-plane $V$. Further, let $f:U\to V^\perp$ be a Lipschitz map and denote by $\Gamma$ the graph of $f$ over $U$. Then, defined $F(z):=(z,f(z))$, for every $u\in W^{s,p}(\Gamma)$ we have that $u\circ F\in W^{s,p}(U)$. 

Viceversa, if $u\in W^{s,p}(U)$, then $u\circ \pi_V\in W^{s,p}(F(\Omega))$, where $\pi_V$ denotes the orthogonal projection onto $V$. 
\end{proposition}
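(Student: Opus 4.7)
The plan is to reduce both implications to a single change-of-variables computation exploiting the fact that the graph map $F$ is bi-Lipschitz. First I would record that since $f$ is $L$-Lipschitz for some $L<\infty$,
\begin{equation*}
|z-w| \;\leq\; |F(z)-F(w)| \;\leq\; \sqrt{1+L^2}\,|z-w|\qquad \text{for all } z,w\in U,
\end{equation*}
so the denominator $|x-y|^{sp+k}$ appearing in the Sobolev-Slobodeckij integrand on $\Gamma$ is comparable, up to a multiplicative constant depending only on $L,s,p,k$, to $|z-w|^{sp+k}$ when $x=F(z)$ and $y=F(w)$. Observe also that, since both $U$ and $\Gamma$ are $k$-dimensional, the exponent in the denominators of the two Sobolev-Slobodeckij seminorms is exactly the same.

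Next I would invoke the area formula. Since $f$ is Lipschitz, the Jacobian $J_F(z)=\sqrt{\det(I_k+Df(z)^T Df(z))}$ is well-defined for $\Leb^k$-a.e.\ $z\in U$ and satisfies $1\leq J_F(z)\leq (1+L^2)^{k/2}$. Applying the area formula with $F$ (twice, so as to handle both variables in the Sobolev-Slobodeckij double integral) yields
\begin{equation*}
[u]_{W^{s,p}(\Gamma)}^p \;=\; \int_U\int_U \frac{|u\circ F(z)-u\circ F(w)|^p}{|F(z)-F(w)|^{sp+k}}\, J_F(z)\,J_F(w)\,dz\,dw,
\end{equation*}
together with the analogous identity $\|u\|_{L^p(\Gamma)}^p=\int_U |u\circ F(z)|^p\,J_F(z)\,dz$. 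Combining these with the bi-Lipschitz bounds on $|F(z)-F(w)|$ and the pointwise bounds on $J_F$, I obtain constants $0<c_1\leq c_2<\infty$ depending only on $L,s,p,k$ such that
\begin{equation*}
c_1\,\|u\circ F\|_{W^{s,p}(U)} \;\leq\; \|u\|_{W^{s,p}(\Gamma)} \;\leq\; c_2\,\|u\circ F\|_{W^{s,p}(U)},
\end{equation*}
which settles the forward implication.

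For the converse, I would exploit the identity $\pi_V\circ F=\mathrm{id}_U$, so that $(u\circ \pi_V)\circ F=u$ as functions on $U$. Plugging this into the equivalence just established, with $u\circ\pi_V$ in place of the generic function on $\Gamma$, immediately yields $\|u\circ\pi_V\|_{W^{s,p}(\Gamma)}\asymp \|u\|_{W^{s,p}(U)}$, as required. There is no genuine obstacle here: the argument is a routine area-formula plus bi-Lipschitz comparison, and the endpoint case $p=\infty$ (in which $W^{s,\infty}$ coincides with $C^s$) is handled identically by reading supremum norms in place of $L^p$-integrals throughout.
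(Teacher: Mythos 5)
Your proof is correct and takes essentially the same approach as the paper: the area formula, the bi-Lipschitz comparison $|z-w|\le|F(z)-F(w)|\le\sqrt{1+L^2}\,|z-w|$, and the elementary Jacobian bounds $1\le J_F\le(1+L^2)^{k/2}$. Your packaging is slightly tighter — you state a single two-sided seminorm equivalence and then deduce the converse by applying it to $u\circ\pi_V$ via the identity $(u\circ\pi_V)\circ F=u$, whereas the paper runs the area-formula computation twice (once per direction) — but the underlying content is identical.
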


\begin{proof}
Without loss of generality, suppose that $V=\mathrm{span}(e_1,\dots,e_k)$. Note that 
\[
DF(z)
= \begin{pmatrix}
\mathrm{Id}_k \\
Df(z)
\end{pmatrix}.
\]
By the area formula, we get
\[
[u]_{W^{s,p}(\Gamma)}^p
= \int\int 
\frac{\lvert u(x)-u(y)\rvert^p}{\lvert x-y\rvert^{sp + k}}
\,d\Haus^k\llcorner \Gamma(x)\,d\Haus^k\llcorner \Gamma(y)
\]
\[
=\int_U\int_U 
\frac{\lvert u\circ F(w)-u\circ F(z)\rvert^p}
{\bigl(\lvert w-z\rvert^2 + \lvert f(w)-f(z)\rvert^2\bigr)^{(sp+k)/2}}
\,JF(z)JF(w)\,dz\,dw,
\]
where $JF$ denotes the Jacobian of $F$. One immediately sees  that 
\[
JF(z)\ge \sqrt{1+\mathrm{det}\bigl(Df(z)^TDf(z)\bigr)} \ge 1,
\]
and hence
\[
\int_U\int_U 
\frac{\lvert u\circ F(w)-u\circ F(z)\rvert^p}
{\lvert w-z\rvert^{sp + k}}\,dz\,dw
\le 
\bigl(1+\mathrm{Lip}(f)^2\bigr)^{\tfrac{sp + k}{2}}
[u]_{W^{s,p}(\Gamma)}^p.
\]
This concludes the proof of the first part of the proposition.

Viceversa, denoted $\Gamma:=F(U)$ we have by the area formula 
\begin{equation}
    \begin{split}
      [u]_{W^{s,p}(F(U))}= & \int \int \frac{\lvert u\circ \pi_V(x)-u\circ \pi_V(y)\rvert^p}{\lvert x-y\rvert^{sp+k}}d\Haus^k\trace \Gamma(x)\,d\Haus^k\trace \Gamma(y)\\
      =&\int_U \int_U \frac{\lvert u(z)-u(w)\rvert^p}{\lvert x-y\rvert^{sp+k}}JF(z)JF(w)dz\,dw\leq n(1+\Lip(f)^2)[u]_{W^{s,p}(U)}. 
      \nonumber
    \end{split}
\end{equation}
This concludes the proof.
\end{proof}

\subsection{Tangency sets and commutators}

Being a paper devoted to Frobenius' theorem we need to formally introduce distributions, Lie brackets of vector fields and tangency sets.

\begin{parag}[Distributions of $\boldsymbol k$-planes]
\label{def:distrib}
Let $1\le k\le n$.
A \emph{distribution of $k$-planes} on the open set $\Omega$ 
in $\R^n$ is a map $V$ that associates to every $x\in\Omega$ 
a $k$-dimensional plane $V(x)$ in $\R^n$, that is, 
a map from $\Omega$ to the Grassmannian $\Gr(k,n)$.

We say that a \emph{system of} $k$ \emph{vector fields}  $\mathcal{X}:=\{X_1,\dots, X_k\}$ 
\emph{spans} $V$ if for every $x\in \Omega$ one has
\[
V(x)
=\Span(\mathcal{X}(x)) :=\Span\big\{ X_1(x),\dots,X_k(x) \big\}
\, .
\] 
We say that the distribution $V$ is
of class $C^r$, with $r=0,1,\dots,\infty$,
if it is \emph{locally} spanned by $\{X_1,\dots, X_k\}$,
where the vector fields $X_i$ are of class $C^r$.%
%

Given $h=1,\dots,k$ we say that a system of $h$ vector fields $\mathcal{W}$ on $\Omega$
is \emph{tangent} to $V$ if $\Span(\mathcal{W}(x)) \subseteq V(x)$ for every~$x$
(simply $\mathcal{W}(x)\in V(x)$ when $h=1$).
\end{parag}

\begin{parag}[Lie brackets of vector fields]
\label{def:lie}
Recall that given two vector fields $X$, $X'$ on $\Omega$ of class $C^1$, 
the Lie bracket $[X,X']$ is the vector field on $\Omega$ 
defined by
\[
[X,X'](x)
:= \frac{\bd X}{\bd X'}(x) - \frac{\bd X'}{\bd X}(x)
= \text{D}_x X \, (X'(x)) - \text{D}_x X' \, (X(x))
\, , 
\]
where $\text{D}_x X$ and $\text{D}_x X'$ stand for the differentials of $X$ and $X'$
at the point $x$, viewed as linear maps from $\R^n$ into itself.
\end{parag}

\begin{parag}[Involutivity of a distribution $\boldsymbol{V}$
and the set $\boldsymbol{N(V)}$]
\label{def:inv}
Let $V$ be a distribution of $k$-planes 
of class $C^1$ on the open set $\Omega$ in $\R^n$.

We say that $V$ is \emph{involutive} at a point $x\in\Omega$ if
for every pair of vector fields $X$, $X'$ of class $C^1$ which
are tangent to $V$ 
the commutator $[X,X'](x)$ belongs to $V(x)$.
We say that $V$ is involutive if it is involutive 
at every point of~$\Omega$.

The collection of all points $x$ where $V$ is not involutive is called
the \emph{non-involutivity set} of $V$ and denoted by $N(V)$.
Note that this set is open. 
\end{parag}

\begin{parag}[Tangency sets of a surface to a distribution]
\label{def:tan}
Let $V$ be a distribution of $k$-planes of class $C^1$ on the open set $\Omega$ in $\R^n$ and $S\subseteq \Omega$ be a $k$-dimensional manifold of class $C^1$. 
We say that $x\in S$ is a \emph{tangency point} of $S$ with respect to $V$ if and only if $\Tan(S,x)=V(x)$, where here $\Tan(S,x)$ denotes the classical tangent of the surface $S$ at $x$. 
The set of such points is called the \emph{tangency set} of $S$ with respect to $V$ and denoted by:
$$\tau(S,V):=\{x\in S:\Tan(S,x)=V(x)\}.$$
\end{parag}

\subsection{Cantor-type sets, their dimension and boundary regularity}\label{cantortypesets}

The counterexamples required for the sharpness part of
Theorem~\ref{main:intro:frobenius} relies on two complementary ingredients:  
the construction of a compact set \(C\) whose indicator belongs to
\(W^{s,1}(\Omega)\) and  
the construction of a \(k\)-dimensional graph tangent to the distribution
\(V\) on \(C\).
In this subsection we address the first task by constructing
the aforementioned Cantor-type set.

Throughout this subsection \(\Omega\subset\R^{k}\) denotes a fixed open,
bounded set.
The procedure is classical in spirit: we iteratively construct a lattice of
axis-parallel nested cubes that are separated by strips of width dictated by some sequence \(\{\rho_j\}_{j\ge 1}\).
The parameters are tuned so that the set
\[
  C \;=\;\bigcap_{j=1}^{\infty}\,
        \bigcup_{Q\in\Delta_j} Q,
\]
where \(\Delta_j\) is the family of surviving cubes at the \(j\)-th
generation, satisfies
\(\mathscr L^{k}(C)>0\) and
\(\mathbb 1_{C}\in W^{s,p}(\Omega)\).
Sections~\ref{definizionecubani}--\ref{compatto}
detail the construction and give the precise quantitative bounds
\begin{equation*}
  \| \mathbb 1_{C}\|_{W^{s,p}(\Omega)}
  \;\le\;
  c(k,s,p,\Omega)\Bigl(
     1+\sum_{j=1}^{\infty} 2^{Bj}\rho_{j}^{\,1-sp}
  \Bigr),
\end{equation*}
where \(B\in\N\) is a fixed branching parameter and
\(c=c(k,s,p,\Omega)\) is an explicit constant.
The choice of \(\{\rho_j\}\) will later be coupled with the geometric
oscillation of the tangent graphs, ensuring that the pair
compact set and graph realizes the regularity predicted by
Theorem~\ref{lusin:intro}.


\begin{definition}\label{cubes} Let $\varepsilon_0\in(0,1/10)$. Denote by $Q({p},s)$ the closed cube centered at ${p}$ with side length $s$. Viceversa, given a closed cube $Q$ in $\R^k$ we denote by ${ c}(Q)$ and $\ell(Q)$ the center and the side-length of $Q$ respectively. 
For any $s>0$, we set the lattice
$$L(s,\Omega):=(s\Z)^{k}\cap \{{ p}\in\Omega:\,Q({ p},s)\subset\Omega\},$$
and we denote by $\delta_0:=\delta_0(\varepsilon_0,\Omega)>0$ the supremum of those $0<\delta<1/25$ for which
\[
\Leb^{k}\left(\bigcup_{{ p}\in L(\delta,\Omega)} Q({ p},\delta)\right)\ge (1-\varepsilon_0)\Leb^{k}(\Omega)\,.
\]
In the following, we let $\Delta_0(\delta):=\{Q(p,\delta):p\in L(\delta,\Omega)\}$.
\end{definition}

Roughly speaking, we aim at working with a lattice \(L(\delta,\Omega)\) whose mesh $\delta<\delta_0$ is still guaranteeing an acceptable approximation of \(\Omega\). From now on, \(\delta_0\) is considered as a fixed parameter of the problem and will not be recalled.  Since $\Omega\subset\R^k$ is fixed as well, from now on we will drop further dependence on it.

\begin{definition}\label{definizionecubani}
Let $\delta<\delta_0$ and $B\in\N$. In the following ,with the symbol $\boldsymbol{\rho}$, we will denote a sequence of positive numbers $\boldsymbol{\rho}:=(\rho_j)_{j\in\N}$ such that 
\begin{equation}
    \sum_{j\in \N}2^{Bj}\rho_{j+1}\leq \delta.
    \label{ipotesirho1}
\end{equation}
We now construct a family of cubes $\Delta=\Delta(\delta, B)$ that is the union of subfamilies $\{\Delta_i\}_{i\in\N}$ of disjoint cubes with the same side length $r_i$ and having centers in a certain discrete set $L_i\subseteq \Omega$. This construction will be performed inductively.

\smallskip

Let $r_0:=\delta$, $L_1:=L(\delta,\Omega)$ and  $r_1:=\delta-\rho_1$.
We define the first layer $\Delta_1$ to be the family of closed cubes centered at $L_1$  with side $r_1$. At the $i^{\rm th}$ step, we let
$$
L_i:=\bigcup_{Q\in \Delta_{i-1}} \mathfrak{c}(Q)+\frac 14r_{i-1}\mathbb{1}+\Big(\frac 12r_{i-1}\mathbb{Z}\Big)^{k}\cap Q,
$$
where $\mathbb{1}:=(1,\ldots,1)\in \R^k$. Finally, the elements of $\Delta_i$ are defined to be those cubes $Q(p,r_i)$ with center $p\in L_i$ and side length
$r_i:=2^{-B}r_{i-1}-\rho_i$.

\smallskip

The family of cubes $\Delta=\bigcup_{i\ge 1}\Delta_i$ has the following ``genealogical'' property. If $C\in\Delta_{i+1}$, then there exists a unique $\mathfrak{f}\in \Delta_i$ such that $C\subset \mathfrak{f}(C)$.
\end{definition}

\begin{parag}[Restrictions on the decay of $\boldsymbol{\rho}$]\label{decayofrho}

One immediately sees that the very definition of the $r_i$s yields 
$$r_{i}=\frac{\delta-\sum_{j=1}^i2^{Bj}\rho_j}{2^{Bi}}.$$
Thanks to \eqref{ipotesirho1}, we see that $r_i>0$ for every $i\in \N$ and thanks to our choice of $r_i$ we have 
\begin{equation}
r_i\leq \delta 2^{-Bi}\qquad\text{and}\qquad r_i\leq  2^{-B}r_{i-1}.
    \label{stimer_i}
\end{equation}
In order to simplify the computations in the following we impose further conditions on $\boldsymbol{\rho}$. More specifically we suppose that 
\begin{equation}
    \{\rho_\iota\}_{\iota\in\N} \text{ and }\{\rho_{\iota+1}^{-1}r_{\iota+1}r_\iota\}_{\iota\in \N}\text{ are decreasing and }\sum_{\iota\in \N}\rho_{\iota+1}^{-1}r_{\iota+1}r_\iota<\infty.
    \label{ipotesisurho}
\end{equation}
\end{parag}

\begin{parag}[Cantor-type sets associated to $\boldsymbol{\rho}$]
\label{defCompatto}
In the following we let 
$\mathfrak{C}:=\mathfrak{C}(\delta,\boldsymbol{\rho},B,\Omega)$ be the compact set
associated with the cubes $\Delta$ and defined as
$$\mathfrak{C}:=\bigcap_{j\in \N} \bigcup_{Q\in \Delta_j} Q.$$
In the following it will be convenient to set $\mathfrak{C}_j   :=\bigcup_{Q\in \Delta_j} Q$.
\end{parag}

\begin{proposition} \label{compatto}
Let $s\in (0,1)$ and suppose that $\boldsymbol{\rho}$ is a sequence such that 
    $$\sum_{j\in \N}2^{Bj}\rho_{j}< \delta.$$
Then, $\mathscr{L}^k(\mathfrak{C})>0$ and if $\sum_{j\in \N}2^{Bj}\rho_{j}^{1-s}<\infty$, we have
$$\text{$\mathbb{1}_{\mathfrak{C}}\in W^{s,1}(\Omega)$ and $\lVert \mathbb{1}_{\mathfrak{C}}\rVert_{W^{s,1}(\Omega)}\lesssim_{k,B,s,\Omega}\mathscr{L}^k(\Omega)+\sum_{j\in \N}2^{Bj}\rho_j^{1-s}$,}$$
where $\lesssim_{k,B,s,\Omega}$ means that the inequality holds up to constants depending on $k,B,s$ and $\Omega$.
\end{proposition}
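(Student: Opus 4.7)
The proof splits into the two claims of the statement: $\Leb^k(\mathfrak{C})>0$ and the quantitative $W^{s,1}$-bound on $\mathbb{1}_\mathfrak{C}$. The first is a direct computation: writing $\Leb^k(\mathfrak{C}_j)=N_j r_j^k$ with $N_j=\lvert\Delta_j\rvert$, an induction on $j$ identifies this quantity with $\lvert\Delta_0\rvert\bigl(\delta-\sum_{l\leq j}2^{Bl}\rho_l\bigr)^k$; since $\mathfrak{C}_j\searrow\mathfrak{C}$, passing to the limit and using the hypothesis $\sum_{l\geq 1}2^{Bl}\rho_l<\delta$ gives $\Leb^k(\mathfrak{C})>0$.

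For the Sobolev bound my strategy combines Fatou's lemma with a telescoping argument. Since $[\mathbb{1}_\mathfrak{C}]_{W^{s,1}(\Omega)}\leq [\mathbb{1}_\mathfrak{C}]_{W^{s,1}(\R^k)}$ and $\mathbb{1}_{\mathfrak{C}_N}\to\mathbb{1}_\mathfrak{C}$ pointwise, Fatou's lemma applied to the double integral defining the seminorm yields $[\mathbb{1}_\mathfrak{C}]_{W^{s,1}(\Omega)}\leq\liminf_N[\mathbb{1}_{\mathfrak{C}_N}]_{W^{s,1}(\R^k)}$. For fixed $N$, the telescoping identity $\mathbb{1}_{\mathfrak{C}_N}=\mathbb{1}_{\mathfrak{C}_0}-\sum_{j=0}^{N-1}\mathbb{1}_{D_j}$, where $D_j:=\mathfrak{C}_j\setminus\mathfrak{C}_{j+1}$, together with subadditivity of the seminorm, reduces matters to bounding $[\mathbb{1}_{D_j}]_{W^{s,1}(\R^k)}$ uniformly in $j$.

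For the key estimate, note that $D_j=\bigsqcup_{Q\in\Delta_j}D_j^Q$ with $D_j^Q:=Q\setminus\mathfrak{C}_{j+1}$ a thin grid-like region of width $\lesssim\rho_{j+1}$ inside the parent cube $Q$ of side $r_j$. Subadditivity over disjoint supports gives $[\mathbb{1}_{D_j}]_{W^{s,1}(\R^k)}\leq\sum_{Q\in\Delta_j}[\mathbb{1}_{D_j^Q}]_{W^{s,1}(\R^k)}$; on each $D_j^Q$ I would apply the classical interpolation bound
\[
  [\mathbb{1}_E]_{W^{s,1}(\R^k)}\;\lesssim_{k,s}\;\Leb^k(E)^{1-s}\,\Haus^{k-1}(\bd E)^s,
\]
which follows from the two-sided estimate $\lvert E\triangle(E+h)\rvert\leq\min\bigl(2\Leb^k(E),\,\lvert h\rvert\,\Haus^{k-1}(\bd E)\bigr)$ after optimizing in $\lvert h\rvert$. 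Direct geometric computations yield $\Leb^k(D_j^Q)\lesssim_{k,B}\rho_{j+1}r_j^{k-1}$ and $\Haus^{k-1}(\bd D_j^Q)\lesssim_{k,B}r_j^{k-1}$, hence $[\mathbb{1}_{D_j^Q}]_{W^{s,1}(\R^k)}\lesssim_{k,s,B}r_j^{k-1}\rho_{j+1}^{1-s}$. Summing over $Q$ using $N_j r_j^{k-1}\lesssim_{B,\Omega}2^{Bj}$ and then over $j$ produces $\sum_j[\mathbb{1}_{D_j}]_{W^{s,1}(\R^k)}\lesssim_{k,s,B,\Omega}\sum_{l\geq 1}2^{Bl}\rho_l^{1-s}$; adjoining the trivial bounds $[\mathbb{1}_{\mathfrak{C}_0}]_{W^{s,1}(\R^k)}\lesssim_{k,s,\Omega,B}1$ and $\lVert\mathbb{1}_\mathfrak{C}\rVert_{L^1(\Omega)}\leq\Leb^k(\Omega)$ closes the argument.

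The main technical hurdle will be the sharp geometric estimate on $D_j^Q$: one must exploit its structure as a union of thin slabs of width $\rho_{j+1}$ to extract precisely the factor $\rho_{j+1}^{1-s}$ from the interpolation. A cruder bound treating $D_j^Q$ as a generic subset of $Q$ would yield only $r_j^{k-s}$ per cube, producing a divergent sum $\sum_j N_j r_j^{k-s}\sim\sum_j 2^{Bsj}$; it is precisely the interpolation with sharp volume/perimeter exponents that makes the summability hypothesis $\sum_j 2^{Bj}\rho_j^{1-s}<\infty$ exactly sufficient.
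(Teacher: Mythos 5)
Your proof is correct and establishes the proposition, but it takes a genuinely different route from the paper's argument. The paper works directly on the double integral $[\mathbb{1}_\mathfrak{C}]_{W^{s,1}(\Omega)}=2\int_{\mathfrak{C}^c\cap\Omega}\int_\mathfrak{C}|x-y|^{-k-s}\,dx\,dy$: it decomposes the outer domain by generations $\mathfrak{C}_i\setminus\mathfrak{C}_{i+1}$, and for $y$ in the $i$-th annulus it bounds the inner integral by a layer-cake/integration-by-parts argument, getting $\int_\mathfrak{C}|x-y|^{-k-s}\,dx\lesssim\mathrm{dist}(y,\mathfrak{C})^{-s}\leq\rho_{i+1}^{-s}$, and then multiplies by the volume estimate $\Leb^k(\mathfrak{C}_i\setminus\mathfrak{C}_{i+1})\lesssim 2^{Bki}r_i^{k-1}\rho_{i+1}$. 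You instead telescope $\mathbb{1}_\mathfrak{C}=\mathbb{1}_{\mathfrak{C}_0}-\sum_j\mathbb{1}_{D_j}$ (justified via Fatou), use the triangle inequality for the $W^{s,1}$-seminorm, and apply the volume-perimeter interpolation $[\mathbb{1}_E]_{W^{s,1}}\lesssim\Leb^k(E)^{1-s}\Haus^{k-1}(\partial E)^s$ on each thin annular region $D_j^Q$. The numerics come out identically: your $\Leb^k(D_j^Q)^{1-s}\Haus^{k-1}(\partial D_j^Q)^s\sim(\rho_{j+1}r_j^{k-1})^{1-s}(r_j^{k-1})^s=\rho_{j+1}^{1-s}r_j^{k-1}$ produces, after summing, exactly the same $\sum_j 2^{Bj}\rho_{j+1}^{1-s}$ controlled by the hypothesis. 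Your route buys a cleaner structure (standard interpolation replaces the ad hoc layer-cake computation, and the triangle inequality replaces the manual partition of the outer domain) and incidentally avoids an index slip in the paper's final step, where $\rho_i\rho_{i+1}^{-s}$ is bounded by $\rho_i^{1-s}$ although $\rho$ is decreasing; your bound lands directly on $\rho_{j+1}^{1-s}$ with no issue. The paper's approach buys a slightly more elementary toolbox (no appeal to the $W^{s,1}$-BV interpolation inequality).
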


\begin{proof}
    One immediately sees  that 
$$\mathscr{L}^{k}\Big(\bigcup_{Q\in \Delta_i}Q\Big)=\mathrm{Card}(L_1)\Big(\delta-\sum_{\iota=0}^i2^{B\iota}\rho_\iota\Big)^{k}.$$
Therefore, thanks to the continuity of the measure from above we infer that
$$\mathscr{L}^{k}(\mathfrak{C})=\mathrm{Card}(L_1)\Big(\delta-\sum_{\iota\in\N}2^{B\iota}\rho_\iota\Big)^{k}>0.$$
 Clearly, $\lVert \mathbb{1}_{\mathfrak{C}}\rVert_{L^1(\Omega)}\leq \mathscr{L}^k(\Omega)$. Therefore, in order to prove that 
$\mathbb{1}_{\mathfrak{C}}\in W^{s,1}(\Omega)$ we just need to estimate the seminorm
$[\mathbb{1}_\mathfrak{C}]_{W^{s,1}(\Omega)}$. One immediately sees  that 
$$[\mathbb{1}_\mathfrak{C}]_{W^{s,1}(\Omega)}=2\int_{\mathfrak{C}^c}\int_{\mathfrak{C}}\frac{dxdy}{\lvert x-y\rvert^{k+s}},$$
and thus
\begin{equation}
\begin{split}
[\mathbb{1}_\mathfrak{C}]_{W^{s,1}(\Omega)}=&2\int_{ \mathfrak{C}_1^c}\int_{\mathfrak{C}}\frac{dxdy}{\lvert x-y\rvert^{k+s}}\\
    =&2 \int_{\mathfrak{C}_1^c}\int_{\mathfrak{C}}\frac{dxdy}{\lvert x-y\rvert^{k+s}}+2\sum_{i=1}^\infty\int_{\mathfrak{C}_{i}\setminus \mathfrak{C}_{i+1}}\int_{\mathfrak{C}}\frac{dxdy}{\lvert x-y\rvert^{k+s}},
\end{split}
\end{equation}
where the sets $\mathfrak{C}_j$ were introduced in \S \cref{defCompatto}.
Fix $i\in \N$ and let $y\in \mathfrak{C}_{i}\setminus \mathfrak{C}_{i+1}$. Then, we have
\begin{equation}
    \begin{split}
&\qquad\qquad\qquad\int_{\mathfrak{C}} \frac{dx}{\lvert x-y\rvert^{k+s}}=\int_{\mathrm{dist}(y,\mathfrak{C})}^\infty \frac{\mathscr{H}^{k-1}(\partial B(y,t)\cap \mathfrak{C})}{t^{k+s}}dt\\
=&\Big[\frac{\mathscr{L}^{k}( B(y,t)\cap \mathfrak{C})}{t^{k+s}} \Big]_{\mathrm{dist}(x,\mathfrak{C})}^\infty-(k+s)\int_{\mathrm{dist}(y,\mathfrak{C})}^\infty\frac{\mathscr{L}^{k}( B(y,t)\cap\mathfrak{C})}{t^{k+1+s}}dt\\
&\qquad\qquad\qquad=-(k+s)\int_{\mathrm{dist}(y,\mathfrak{C})}^\infty\frac{\mathscr{L}^{k}( B(y,t)\cap \mathfrak{C})}{t^{k+1+s}}dt,
\nonumber
    \end{split}
\end{equation}
From the above computation, we infer that 
\begin{equation}
    \begin{split}
        &\qquad\quad\int_{\mathfrak{C}} \frac{dx}{\lvert x-y\rvert^{k+s}}\leq -(k+s)\mathscr{L}^k(B(0,1))\int_{\mathrm{dist}(y,\mathfrak{C})}^\infty\frac{dt}{t^{1+s}}\\
        &=(k+s)s\mathscr{L}^k(B(0,1))\mathrm{dist}(y,\mathfrak{C})^{-s}
       \leq (k+s)s\mathscr{L}^k(B(0,1))\rho_{i+1}^{-s},
        \nonumber
    \end{split}
\end{equation}
where the first identity in the second line follows from the definition of the $r_i$s and
and the last inequality comes from the fact that $y\in \mathfrak{C}_{i}\setminus \mathfrak{C}_{i+1}$. The volume of $\mathfrak{C}_{i}\setminus \mathfrak{C}_{i+1}$ can be estimated as follows. Recall that the cardinality of each $\Delta_i$ is $\mathrm{Card}(L_1)2^{Bki}$, and since the cubes of $\Delta_i$ have sidelength $r_i$ we have 
\begin{equation}
    \begin{split}
       & \mathscr{L}^k(\mathfrak{C}_{i}\setminus \mathfrak{C}_{i+1})=\mathrm{Card}(L_1)2^{Bki}r_i^k-\mathrm{Card}(L_1)2^{Bk(i+1)}r_{i+1}^k\\
        =&\mathrm{Card}(L_1)2^{Bki}(r_i^k-2^{Bk}r_{i+1}^k)=\mathrm{Card}(L_1)2^{Bki}(r_i^k-2^{Bk}(2^{-B}r_i-\rho_i)^k)\\
        =&\mathrm{Card}(L_1)2^{Bki}r_i^k(1-(1-2^{B}r_i^{-1}\rho_i)^k)\leq 2^Bk\mathrm{Card}(L_1)2^{Bki}r_i^{k-1}\rho_{i},
        \label{stimamassadiffCC}
    \end{split}
\end{equation}
where the last inequality is a consequence of the Bernoulli's inequality. The above computation shows in particular that
\begin{equation}
    \begin{split}
    &\int_{\mathfrak{C}_{i}\setminus \mathfrak{C}_{i+1}}\int_{\mathfrak{C}}\frac{dxdy}{\lvert x-y\rvert^{k+s}}\leq \mathscr{L}^k(\mathfrak{C}_{i}\setminus \mathfrak{C}_{i+1})(k+s)s\mathscr{L}^k(B(0,1))\rho_{i+1}^{-s}\\
    &\qquad \leq2^Bk\mathrm{Card}(L_1)2^{Bki}r_i^{k-1}\rho_{i}\,\cdot\,(k+s)s\mathscr{L}^k(B(0,1))\rho_{i+1}^{-s}\\
    &\qquad\qquad\leq 2^k\delta^{k-1}k(k+1)\mathrm{Card}(L_1)2^{B(i+1)}\rho_i\rho_{i+1}^{-s}\\
     &\qquad\qquad\leq2^k\delta^{k-1}k(k+1)\mathrm{Card}(L_1)2^{B(i+1)}\rho_i^{1-s},
    \nonumber
    \end{split}
\end{equation}
where the last inequality comes from the fact that $\rho_i$ is assumed to be decreasing.
Summing up, we infer that 
\begin{equation}
    \begin{split}
        &[\mathbb{1}_\mathfrak{C}]_{W^{s,1}(\Omega)}\leq 2 \int_{\mathfrak{C}_1^c}\int_{\mathfrak{C}}\frac{dxdy}{\lvert x-y\rvert^{k+s}}\\
&\qquad\qquad\qquad\qquad\qquad+k(k+1)\mathrm{Card}(L_1)\sum_{i=1}^\infty2^{B(i+1)}\rho_{i}^{1-s}<\infty,
    \nonumber
    \end{split}
\end{equation}
where the last inequality comes from the fact that because of the choice of $\delta$ we have $2^k\delta^{k-1}\leq 1$. 
This concludes the proof. 
\end{proof}

\begin{remark}
    Note that, if 
    $\sum_{j\in \N}2^{Bj}\rho_{j}=\delta$, then $\mathscr{L}^k(\mathfrak{C})=0$. 
\end{remark}

The following result will be employed to construct the tangency set in the $C^{1,1}$-regime, see Theorem \ref{teoremaC1,1}.

\begin{proposition}\label{dimensione}
    Let $0<\lambda<1$ and $\rho_i:=\delta(\lambda-1)\lambda^{i}2^{-Bi}$. Then, $\mathfrak{C}$ is a self-similar fractal in the sense of Hutchinson, see \cite{hutchinson}, and
    $\mathfrak{C}$ has dimension $\frac{B}{B-\log_2\lambda}k=:\mathfrak{d}$ and there exists a constant $c\geq 1$ such that 
    $$c^{-1}\leq \liminf_{r\to 0}\frac{\Haus^\mathfrak{d}\llcorner \mathfrak C(B(x,r))}{r^\mathfrak{d}}\leq \limsup_{r\to 0}\frac{\Haus^\mathfrak{d}\llcorner \mathfrak C(B(x,r))}{r^\mathfrak{d}}\leq c.$$
\end{proposition}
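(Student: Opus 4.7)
The plan is to realize $\mathfrak{C}$ as a finite disjoint union of attractors of a single iterated function system of similitudes satisfying the open set condition, and then to invoke the classical Hutchinson--Moran theorem for both the dimension identification and the two-sided density bounds.

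The first step is to compute explicitly the side lengths $r_i$ generated by the recursion $r_i = 2^{-B} r_{i-1} - \rho_i$ with the prescribed choice of $\rho_i$. A direct telescoping sum starting from $r_0=\delta$ will show that
\[
r_i \;=\; \delta\,\mu^i
\qquad\text{with}\qquad
\mu \;:=\; \frac{\lambda}{2^B},
\]
so that the contraction ratio between consecutive generations is the single constant $\mu$. Consequently the genealogical tree of $\Delta$ is perfectly uniform: every cube $Q \in \Delta_i$ contains exactly $N := 2^{Bk}$ cubes of $\Delta_{i+1}$, each obtained from $Q$ by applying a similitude of $\R^k$ of ratio $\mu$ composed with one of $N$ fixed lattice translations prescribed by Definition \ref{definizionecubani}.

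The second step is to fix a single cube $Q_0 \in \Delta_1$ and to introduce the $N$ similitudes $\phi_1,\dots,\phi_N$ of ratio $\mu$ sending $Q_0$ onto the $N$ children of $Q_0$ in $\Delta_2$. By construction the images $\phi_j(Q_0)$ are pairwise separated by a distance at least $\rho_2>0$ and are all contained in the interior of $Q_0$, so the open set condition (OSC) holds relative to $U:=\mathrm{int}(Q_0)$. Iterating the inclusion $\phi_j(Q_0)\subset Q_0$ exhibits $\mathfrak{C}\cap Q_0$ as the decreasing intersection $\bigcap_{i\ge 1}\bigcup_{\sigma\in\{1,\dots,N\}^i}\phi_{\sigma_1}\circ\cdots\circ\phi_{\sigma_i}(Q_0)$, which is precisely the Hutchinson attractor of $\{\phi_j\}_{j=1}^N$. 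Since $\mathfrak{C}$ is a finite disjoint union of isometric copies of this attractor, one per cube of $\Delta_1$, the whole set $\mathfrak{C}$ is a self-similar fractal in Hutchinson's sense.

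The final step is to apply Moran's dimension formula and the Ahlfors regularity theorem for IFS attractors satisfying the open set condition (see \cite{hutchinson}, or the treatment in \cite[Chapter 4]{Mattila1995GeometrySpaces}). Moran's formula identifies $\dim_{\Haus}\mathfrak{C}$ as the unique $\mathfrak{d}$ solving $N\mu^{\mathfrak{d}}=1$; substituting $N=2^{Bk}$ and $\mu=\lambda/2^B$ one gets $2^{B(k-\mathfrak{d})}\lambda^{\mathfrak{d}}=1$, which after taking logarithms rearranges to $\mathfrak{d}=Bk/(B-\log_2\lambda)$, as claimed. The OSC furthermore delivers a constant $c\ge 1$ with $c^{-1}r^{\mathfrak{d}}\le \Haus^{\mathfrak{d}}\llcorner\mathfrak{C}(B(x,r))\le c\,r^{\mathfrak{d}}$ for every $x\in\mathfrak{C}$ and every $r\in(0,\mathrm{diam}(\mathfrak{C})]$, and taking $\liminf$ and $\limsup$ as $r\to 0$ then gives the two-sided density bound in the statement. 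The only computation that is not automatic is the first step, namely the telescoping identity $r_i=\delta\mu^i$; once this is in hand, the rest of the argument is a direct application of classical machinery.
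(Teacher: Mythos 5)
Your plan is the same as the paper's own short proof: identify $\mathfrak{C}$ as a self-similar set in Hutchinson's sense, verify the open set condition, and invoke Moran's formula together with the Ahlfors-regularity theorem for IFS attractors. The dimension calculation $2^{Bk}\mu^{\mathfrak d}=1$ with $\mu=\lambda 2^{-B}$, yielding $\mathfrak d = Bk/(B-\log_2\lambda)$, and the OSC argument for the two-sided density bound are both correct and match what the paper cites from \cite{hutchinson}.

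One step needs repair before the self-similarity is legitimately established: the recursion $r_i = 2^{-B}r_{i-1}-\rho_i$ with $r_0=\delta$ and $\rho_i = \delta(1-\lambda)\lambda^i 2^{-Bi}$ (the factor $(\lambda-1)$ in the proposition's statement is evidently a sign typo) does \emph{not} telescope to $r_i = \delta\mu^i$. The geometric sequence solving this recursion is $r_i = \delta\lambda^{i+1}2^{-Bi} = \delta\lambda\cdot\mu^i$, which requires the initial value $r_0 = \delta\lambda$; starting literally from $r_0 = \delta$ produces $r_i = \delta\lambda^{i+1}2^{-Bi} + \delta(1-\lambda)2^{-Bi}$, which is not geometric (the ratio $r_{i+1}/r_i\to 2^{-B}$), so consecutive generations are not similar and the IFS description collapses. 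The paper carries the same indexing ambiguity (Definition~\ref{definizionecubani} sets $r_0=\delta$ and $r_1=\delta-\rho_1$, yet \eqref{eq:r.iotaCzero} asserts $r_\iota = \delta\lambda^{\iota+1}2^{-B\iota}$, inconsistent with both), so you are inheriting rather than introducing the error; once the initial condition is corrected, e.g.\ by incorporating a zeroth shrinkage $\rho_0 = \delta(1-\lambda)$ so that $r_0=\delta\lambda$, your telescoping step succeeds and the rest of the argument goes through.
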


\begin{proof}
    It is easily checked that 
    the set $\mathfrak{C}$ is the fixed point of  $2^{Nk}$ affine transformations with Lipschitz constant $\lambda2^{-B}$. Therefore \cite[Theorem (3)]{hutchinson} yields the claimed result. 
\end{proof}

\section{Locality of the exterior differential operator}

\subsection{Stokes Theorem for rough forms}

In this section we prove a locality result for the exterior differential in the plane. As a first step, we prove a general Stokes theorem for rough forms.

\begin{parag}[Distributional exterior derivative of $k$-forms]
Let $\omega$ be a continuous $k$-form. We define its distributional exterior derivative $\mathrm{d}\omega$ by duality, namely, for any $(k+1)$-current $T$ of the form $T=\tau\,\mathscr{L}^n$, where $\tau$ is a $(k+1)$-vector field in $C^\infty_c$, we require
$$\langle\partial T,\omega\rangle=\langle T,\mathrm{d}\omega\rangle.$$
\end{parag}

\begin{proposition}\label{divergencethm}
Suppose $\Omega\subset\R^n$ is a bounded open set of finite perimeter and let $\omega$ be a continuous $(n-1)$-form with $\mathrm{d}\omega\in L^1_{\mathrm{loc}}(\R^n)$, where $\mathrm{d}\omega$ must be understood in the distributional sense. Then
$$\int_\Omega \mathrm{d}\omega=\int_{\partial^*\Omega}\omega:=\int\langle \mathfrak{n}\wedge\omega,e_1\wedge\cdots\wedge e_n\rangle\,d\Haus^{n-1}\llcorner \partial^*\Omega,$$
where $\partial^*\Omega$ denotes the reduced boundary of $\Omega$ and where $\mathfrak{n}$ is the 1-form corresponding via polarity, see \cite[\S 1.7.1]{Federer1996GeometricTheory}, to the measure-theoretic unit normal to $\partial^*\Omega$.
\end{proposition}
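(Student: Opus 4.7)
The plan is to prove the identity by a standard mollification argument that reduces the problem to the classical De~Giorgi--Federer Stokes theorem for smooth forms on sets of finite perimeter. Let $(\rho_\varepsilon)_{\varepsilon>0}$ be a standard family of symmetric smooth mollifiers and set $\omega_\varepsilon := \omega\ast\rho_\varepsilon$, interpreted componentwise. Since convolution commutes with distributional differentiation, one has $\mathrm{d}(\omega_\varepsilon) = (\mathrm{d}\omega)\ast\rho_\varepsilon$, so $\omega_\varepsilon$ is smooth with smooth exterior derivative.

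Two convergence properties will carry the limit. First, since $\omega$ is continuous on $\R^n$ and $\overline{\Omega}$ is compact, $\omega_\varepsilon \to \omega$ uniformly on $\overline{\Omega}$, and in particular uniformly on the bounded set $\partial^*\Omega\subset\overline{\Omega}$. Second, since $\mathrm{d}\omega\in L^1_{\mathrm{loc}}(\R^n)$, standard mollification theory gives $\mathrm{d}(\omega_\varepsilon)\to\mathrm{d}\omega$ in $L^1(\Omega')$ on every bounded open neighbourhood $\Omega'$ of $\overline{\Omega}$. With these in hand, I would invoke the classical Stokes theorem for the smooth $(n-1)$-form $\omega_\varepsilon$ on the finite perimeter set $\Omega$, as recorded for instance in \cite[\S 4.5.6]{Federer1996GeometricTheory}, to obtain
\begin{equation*}
\int_\Omega \mathrm{d}(\omega_\varepsilon) \;=\; \int_{\partial^*\Omega}\omega_\varepsilon
\end{equation*}
for every $\varepsilon>0$. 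Passing to the limit $\varepsilon\to 0$ then yields the claimed identity: the left-hand side tends to $\int_\Omega\mathrm{d}\omega$ by the $L^1$ convergence on a neighbourhood of $\overline{\Omega}$, while the right-hand side tends to $\int_{\partial^*\Omega}\omega$ because $\omega_\varepsilon\to\omega$ uniformly on $\partial^*\Omega$ and, by the finite perimeter hypothesis, the measure $\Haus^{n-1}\llcorner\partial^*\Omega$ is a finite Radon measure.

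The main delicate point, rather than an essential obstacle, is the identification of the boundary integrand. The classical integration-by-parts formula produces a boundary term written against the measure-theoretic unit normal $\mathfrak{n}$ to $\partial^*\Omega$; the statement of the proposition repackages this via the polarity correspondence between $1$-covectors and $(n-1)$-vectors of \cite[\S 1.7.1]{Federer1996GeometricTheory}, so that $\omega\mapsto\langle\mathfrak{n}\wedge\omega,\,e_1\wedge\cdots\wedge e_n\rangle$ reproduces the usual pairing of the tangential part of $\omega$ with the surface element. Checking that the two formulations coincide for smooth $\omega$ is a direct linear-algebraic computation, after which the limiting argument above transfers the identity verbatim to the distributional setting envisaged in the statement.
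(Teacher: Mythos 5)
Your proposal is correct, and it takes a genuinely different and somewhat more direct route than the paper. You mollify only the form $\omega$ and then invoke the classical De~Giorgi--Federer divergence theorem for \emph{smooth} $(n-1)$-forms on a bounded set of finite perimeter, while the paper avoids this by \emph{also} mollifying the current $T=\mathbb{1}_\Omega\,e_1\wedge\cdots\wedge e_n$, so that it only ever needs the (tautological) Stokes theorem for smooth currents paired with smooth forms. The price the paper pays is the auxiliary identity \eqref{id:distro}, $\langle T,\mathrm{d}(\omega*\rho_\delta)*\rho_\varepsilon\rangle = \langle T,\mathrm{d}\omega*(\rho_\delta*\rho_\varepsilon)\rangle$, which is precisely the commutativity of $\mathrm{d}$ with convolution that you simply quote as a standard fact of distribution theory (and indeed $\mathrm{d}(\omega*\rho_\varepsilon)=(\mathrm{d}\omega)*\rho_\varepsilon$ holds for any distributional form). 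Both routes ultimately rest on De~Giorgi's structure theorem for the reduced boundary --- you use it packaged as the finite-perimeter Gauss--Green theorem, the paper uses it to identify $\partial T$ with $(e_1\wedge\cdots\wedge e_n\llcorner\mathfrak{n})\,\Haus^{n-1}\llcorner\partial^*\Omega$ --- so the depth of the external input is comparable. Your limit passage is sound: the continuity of $\omega$ together with the compactness of $\overline\Omega$ and the finiteness of $\Haus^{n-1}\llcorner\partial^*\Omega$ handles the boundary term, and $\mathrm{d}\omega\in L^1_{\mathrm{loc}}$ handles the bulk term. The one detail worth spelling out explicitly (which you correctly flag as a linear-algebra verification) is that Federer's boundary pairing agrees with the expression $\langle\mathfrak{n}\wedge\omega,e_1\wedge\cdots\wedge e_n\rangle$ adopted in the statement; once that is checked for smooth forms it transfers unchanged to the limit.
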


\begin{proof}
Let $T$ be the natural normal $n$-current associated with $\Omega$, defined by
$$T:=\mathbb{1}_\Omega\, e_1\wedge\cdots\wedge e_n.$$ 
Fix $\varepsilon>0$ and let $\rho$ be a standard radially symmetric convolution kernel. Define $\rho_\varepsilon(x):=\varepsilon^{-n}\rho(x/\varepsilon)$. By the definition of the boundary operator for currents, for every $\varepsilon,\delta>0$ we have
\begin{equation}
\langle\partial T,(\omega*\rho_\delta)*\rho_\varepsilon\rangle=\langle\partial (T*\rho_\varepsilon),\omega*\rho_\delta\rangle=\langle T*\rho_\varepsilon,\mathrm{d}(\omega*\rho_\delta)\rangle=\langle T,\mathrm{d}(\omega*\rho_\delta)*\rho_\varepsilon\rangle,
\label{eq:idconv}
\end{equation}
where we repeatedly use the symmetry of $\rho$ and the standard properties of convolution and the boundary operator. We now claim that
\begin{equation}
\langle T,\mathrm{d}(\omega*\rho_\delta)*\rho_\varepsilon\rangle=\langle T,\mathrm{d}\omega*(\rho_\delta*\rho_\varepsilon)\rangle.
\label{id:distro}
\end{equation}
Assuming \eqref{id:distro}, it follows from \eqref{eq:idconv} and the associativity of convolution that for every $\delta,\varepsilon>0$
\begin{equation}
\langle\partial T,\omega*(\rho_\delta*\rho_\varepsilon)\rangle=\langle T,\mathrm{d}\omega*(\rho_\delta*\rho_\varepsilon)\rangle.
\label{eq:idconv2}
\end{equation}
Since $\omega$ is continuous and $\partial T$ is represented by a compactly supported vector-valued finite measure, choosing $\delta=\varepsilon$ and letting $\varepsilon\to0$ yields
$$\langle\partial T,\omega\rangle=\lim_{\varepsilon\to0}\langle T,\mathrm{d}\omega*(\rho_\varepsilon*\rho_\varepsilon)\rangle.$$
Because $\mathrm{d}\omega\in L^1_{\mathrm{loc}}(\R^n)$ and by standard properties of mollifiers, $\mathrm{d}\omega*(\rho_\varepsilon*\rho_\varepsilon)$ converges to $\mathrm{d}\omega$ in $L^1_{\mathrm{loc}}(\R^n)$, we conclude that
$$\langle\partial T,\omega\rangle=\langle T,\mathrm{d}\omega\rangle.$$
In addition, by definition of $T$, we have
\begin{equation}
\begin{split}
\partial T &= e_1\wedge\cdots\wedge e_n\llcorner d\mathbb{1}_\Omega = e_1\wedge\cdots\wedge e_n\llcorner \beta(D\mathbb{1}_\Omega)\\
&=(e_1\wedge\cdots\wedge e_n\llcorner \mathfrak{n})\Haus^{n-1}\llcorner \partial^*\Omega,
\end{split}
\end{equation}
where $\beta$ is the polarity map defined in \cite[\S 1.7.1]{Federer1996GeometricTheory} and $\mathfrak{n}:=\beta(D\mathbb{1}_\Omega)$. Hence,
\begin{equation}
\begin{split}
\int_\Omega \mathrm{d}\omega &=\langle\partial T,\omega\rangle=\int\langle \omega,e_1\wedge\cdots\wedge e_n\llcorner \mathfrak{n}\rangle\,d\Haus^{n-1}\llcorner \partial^*\Omega\\
&=\int\langle \mathfrak{n}\wedge\omega,e_1\wedge\cdots\wedge e_n\rangle\,d\Haus^{n-1}\llcorner \partial^*\Omega=\int_{\partial^*\Omega}\omega,
\end{split}
\end{equation}
which completes the proof of the proposition given \eqref{id:distro}.

We now prove \eqref{id:distro}. It suffices to show that for every smooth, compactly supported normal current $S$ and any test function $g$, the equality
$$\langle S,\mathrm{d}(\omega*g)\rangle=\langle S,\mathrm{d}\omega*g\rangle$$
holds. By the definition of the distributional derivative we have
$$\langle\partial(S*\hat{g}),\omega\rangle=\langle S*\hat{g},\mathrm{d}\omega\rangle,$$
where $\hat{g}(x)=g(-x)$. Applying Fubini's theorem and using the smoothness and compact support of $S$ and $g$, we deduce that
$$\langle S*\hat{g},\mathrm{d}\omega\rangle=\langle S,\mathrm{d}\omega*g\rangle,$$
noting that $\mathrm{d}\omega\in L^1_{\mathrm{loc}}(\R^n)$. The identity
$$\langle\partial(S*\hat{g}),\omega\rangle=\langle (\partial S)*\hat{g},\omega\rangle$$
is evident for smooth $\omega$, and by approximating $\omega$ with smooth forms the equality extends by density. Similarly, one verifies that
$\langle (\partial S)*\hat{g},\omega\rangle=\langle \partial S,\omega*g\rangle$.
This completes the proof of \eqref{id:distro} and hence of the proposition.
\end{proof}

\subsection{Slicing superdensity sets and locality of the exterior differential}

The following is a technical proposition. It's objective is first of all to produce at almost every point of a set with fractional boundary $E$ a sequence of rectangles of bounded eccentricity such that the the boundary of such rectangles meets $E$ in a set of quantifiable big length. Secondly we want to prove that on such rectangles the variation of fractional Sobolev functions is controlled by slicing-type estimates.

\begin{proposition}\label{prop:rettangoli}
Let $0<\varepsilon<1/10$,
$0<s<1$, and let $B$ be a ball in $\R^n$. Suppose that $E\subset\R^2$ is a Lebesgue measurable set with $\mathbb{1}_E\in W^{s,1}(B)$. Assume further that $g\in W^{\alpha,q}(B)$.
Then, there exists a vector $v\in\mathbb{S}^1$ such that fixed $b<s$ and $\tilde{b}<\alpha$, for $\Leb^2$-almost every $x\in E$ one may find a sequence $r_i\to0$ and a constant $\psi=\psi(g,x)$ such that
the rectangle $\mathscr{P}_i(x)$ in $\R^2$, with sides $\mathbb{L}_{1,3}^i$ and $\mathbb{L}_{2,4}^i$ parallel to $v$ and $v^\perp$ respectively, satisfies the following properties.
\begin{itemize}
\item[(i)] If $\mathfrak{b}_i(x)$ denotes the barycenter of $\mathscr{P}_i(x)$, then 
$$|\mathfrak{b}_i(x)-x|\leq 2\varepsilon\,r_i;$$
\item[(ii)] If $\ell_{1,i}$ and $\ell_{2,i}$ denote the lengths of $\mathbb{L}_{1,3}^i$ and $\mathbb{L}_{2,4}^i$ respectively, then 
$$|\ell_{j,i}-2r_i|\leq 4\varepsilon\,r_i\qquad\text{for every $j=1,2$ and every $i\in\N$};$$
\item[(iii)]For every $i\in\N$ we have
$$\Haus^1\bigl(\partial\mathscr{P}_i(x)\setminus E\bigr)\leq 4\varepsilon\,r_i^{\,1+\frac{(1-i^{-1})s}{1-s}};$$
\item[(iv)] For every $\kappa=1,\ldots,4$ and every $i\in\N$ we have
$$\int\int \frac{\lvert g(z)-g(y)\rvert^{q}}{\lvert z-y\rvert^{1+\alpha q}}d\Haus^1\llcorner \mathbb{L}_\kappa^i(z)\,d\Haus^1\llcorner \mathbb{L}_\kappa^i(y)\leq \psi(g,x)r_i;$$
\item[(v)] If 
$\alpha q\leq 1$ there are vertices $\mathfrak{c}_{i}^\kappa\in E$ of $\mathscr{P}_i(x)$ such that for every $\kappa=1,\ldots,4$ we have 
$$\Big(\fint \lvert g(z+\mathfrak c_{i}^\kappa)-g(\mathfrak c_{i}^\kappa)\rvert^{q^*} d\Haus^1\llcorner \mathbb{L}^i_\kappa(z)\Big)^\frac{1}{q^*}\leq \varepsilon r_i^{\widetilde{b}}.$$
\end{itemize}
\end{proposition}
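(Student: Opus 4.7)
The strategy is to parametrize a small family of candidate rectangles near $x$ and show, by Fubini and Markov's inequality, that only a small set of parameters violates any of the five properties.

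\smallskip

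\emph{Choice of $v$ and of $x$.} By Proposition~\ref{restrictionW1,p} applied to both $g\in W^{\alpha,q}$ and $\mathbb{1}_{E}\in W^{s,1}$, for $\gamma_{1,2}$-a.e.\ direction $v\in\mathrm{Gr}(1,2)$ the restrictions of $g$ and of $\mathbb{1}_{E}$ to a.e.\ affine line parallel to $v$ or to $v^{\perp}$ lie in the corresponding Sobolev--Slobodeckij space, with slice seminorms $L^{1}$-integrable in the perpendicular variable. Fix such a $v$. At $\Leb^{2}$-a.e.\ $x\in E$ I would then arrange simultaneously: the 2D super-density of Proposition~\ref{dorronsorosuper-density} for $\mathbb{1}_{E}$ at rate $b<s$; the 1D super-density applied to $\mathbb{1}_{E\cap\mathbb{L}}\in W^{s,1}(\mathbb{L})$ on a.e.\ slice $\mathbb{L}$ through $x$, with 1D Sobolev exponent $1/(1-s)$; Theorem~\ref{dorronsorox} for $g$ at rate $\tilde b<\alpha$, both in $\R^{2}$ and on a.e.\ slice through $x$; and Lebesgue differentiation at $x$ of the integrable maps $z\mapsto[g|_{z+v\R}]_{W^{\alpha,q}}^{q}$ and $z\mapsto[g|_{z+v^{\perp}\R}]_{W^{\alpha,q}}^{q}$. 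The pointwise value produced by this last step will play the role of the constant $\psi(g,x)$ in~(iv).

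\smallskip

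\emph{Parametrization and bad sets.} For each candidate scale $r>0$ I parametrize a rectangle $\mathscr{P}(\boldsymbol{\eta})$ by a four-tuple $\boldsymbol{\eta}=(\eta_{1},\ldots,\eta_{4})\in[-2\varepsilon r,2\varepsilon r]^{4}$ encoding the signed offsets of its four sides from the nominal axis-aligned square of side $2r$ centered at $x$; (i) and (ii) then follow from the range of $\boldsymbol{\eta}$. For (iii), Fubini on the two shift parameters perpendicular to the $\kappa$-th side reduces the $\boldsymbol{\eta}$-average of $\Haus^{1}(\mathbb{L}_{\kappa}\setminus E)$ to a 1D super-density integral along slices in a neighborhood of $x$, bounding the average by a quantity of order $r^{1+b/(1-s)}$; Markov's inequality then confines the bad $\boldsymbol{\eta}$-set to measure $\ll(\varepsilon r)^{4}$. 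For (iv), the slicing identity in Proposition~\ref{restrictionW1,p} combined with Lebesgue differentiation at $x$ bounds the shift-average of the per-slice $W^{\alpha,q}$-seminorm on a segment of length $2r$ by $r\,\psi(g,x)$, and Markov once again confines the bad set. For (v), the 2D super-density around $x$ forces almost every candidate vertex to lie in $E$, while the 1D Dorronsoro applied to $g|_{\mathbb{L}_{\kappa}}$ at such a vertex (nontrivial precisely in the regime $\alpha q\leq 1$, where Theorem~\ref{dorronsorox} on the slice is not superseded by Morrey, and whose $W^{\alpha,q}$-finiteness on the slice comes from Proposition~\ref{restrictionW1,p}) yields the $\varepsilon r^{\tilde b}$ oscillation bound.

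\smallskip

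\emph{Conclusion and main difficulty.} Intersecting the three bad $\boldsymbol{\eta}$-sets leaves a subset of positive measure in $[-2\varepsilon r,2\varepsilon r]^{4}$; picking any parameter in this intersection produces an $r$-scale rectangle satisfying (i)--(v), and a diagonal extraction over $r\downarrow 0$ delivers the sequence $r_{i}\to 0$. The main difficulty is pinning down the exponent $1+(1-i^{-1})s/(1-s)$ in (iii): a naive 2D super-density gives only $1+2b/(2-s)$, which is strictly weaker. One must instead slice $\mathbb{1}_{E}$ and invoke the sharper 1D super-density, whose exponent has denominator $1-s$ rather than $2-s$. The auxiliary factor $(1-i^{-1})$ records that, for each fixed $b<s$, the rate $b/(1-s)$ is saturated only along a subsequence of scales, and one closes the residual gap to $s/(1-s)$ by letting $i\to\infty$; up to this delicate exponent matching, the Fubini/Markov template handles all five properties uniformly.
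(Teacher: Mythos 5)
Your outline shares the paper's key ingredients: slicing $\mathbb{1}_E$ and $g$ via Proposition~\ref{restrictionW1,p}, invoking the \emph{one-dimensional} Dorronsoro super-density (denominator $1-s$ rather than $2-s$), Lebesgue differentiation of the slice $W^{\alpha,q}$-seminorm to produce $\psi(g,x)$, and the $(1-i^{-1})$ bookkeeping. You also correctly flag that a naive 2D super-density cannot give item~(iii). However, the Fubini/Markov selection over the parameter cube $[-2\varepsilon r,2\varepsilon r]^4$ does not deliver the claimed rate $r^{1+b/(1-s)}$. Averaging $\Haus^1(\mathbb{L}_\kappa\setminus E)$ over the offset perpendicular to the $\kappa$-th side just reproduces the $\Leb^2$-measure of a thin strip contained in $B(x,3r)$, which the 2D super-density bounds by $r^{2+2b/(2-s)}$; dividing by the strip width $\sim\varepsilon r$ gives only $r^{1+2b/(2-s)}$, the weaker exponent you already ruled out. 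The 1D super-density bound at $x$ controls only the slice \emph{through} $x$, whereas the sides $\mathbb{L}_\kappa$ live on slices at distance $\sim r$ from $x$; the 1D super-density at those slices is available only at $\Haus^1$-a.e.\ $w\in E$ on the slice, with no uniformity in $r$ from the pointwise limit alone.

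The missing ingredient is precisely the two Severini--Egoroff steps used in the paper's proof: extracting a compact $K\subseteq E$ on which the slice Dorronsoro rates $\gimel_b(\cdot,r;e_\iota)$, $\beth_{\tilde b}(\cdot,r;e_\iota)$ and the quantity $\daleth(\cdot,r;e_\iota)$ are controlled uniformly for $r$ small, then selecting a $\Leb^2$-density point $p$ of $K$ and finding diagonally opposite anchor points $w_{1,r},w_{2,r}\in K$ at distance $\sim r$ from $p$. Placing two of the rectangle's sides through $w_{1,r}$ and the other two through $w_{2,r}$ is what lets one invoke the 1D super-density (at those anchor points rather than at $x$) and obtain the exponent $1+b/(1-s)$, and similarly for the Dorronsoro bound on $g$ at the vertices in item~(v). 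Your Fubini/Markov scheme could in principle be patched by running this Egoroff argument first and then selecting parameters $\boldsymbol{\eta}$ so that the anchor vertices land in the resulting compact set, but as written it replaces the anchoring by an unconditioned average and thereby loses the sharp exponent.
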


\begin{proof}
By \cite[Theorem 5.4]{HGfractionalsobolev}, we can extend $\mathbb{1}_E$ to a function $u\in W^{s,1}(\R^n)$. In what follows we denote by $J$ the standard $2\times 2$ symplectic matrix. 
By Proposition~\ref{restrictionW1,p}, for $\Haus^1$-almost every $v\in\mathbb{S}^1$ and $\Leb^1$-almost every $t\in\R$ we have
\begin{equation}
u\bigl|_{tJv+\R v}\in W^{s,p}(\R)\qquad\text{and}\qquad g\bigl|_{tJv+\R v}\in W^{\alpha,q}(\R),
\label{t1t2pm}
\end{equation}
where $u\bigl|_{tJv+\R v}$ and $g\bigl|_{tJv+\R v}$ denote the indicator function of the restriction of $u$ and the restriction of $g$ onto the line $tJv+\R v$ respectively. 
For every $e\in \mathbb S^1$ and $w\in \R^2$, define 
\begin{equation}
        \gimel_b(w,r\,;e):= \sup_{\tau\leq r}\tau^{- b}\Big(\fint\lvert u(z)-u(w)\rvert^{1^*} d\Haus^1\llcorner (w+\ell_e\cap B(w,10\tau))(z)\Big)^\frac{1}{1^*},
        \nonumber
        \end{equation}
        and
        \begin{equation*}
\daleth(w,r\,;e) := \sup_{\tau\leq r} \frac{1}{20\tau} \int_{-10\tau}^{10\tau} \int_{-10\tau}^{10\tau} \frac{\lvert g(w+he)-g(w+t e)\rvert^{q}}{\lvert h-t\rvert^{1+\alpha q}}\,dh\,dt
\end{equation*}
where $1^*=1/(1-s)$. In case $\alpha q\leq 1$ we also let 
$$\beth_{\tilde{b}}(w,r; e):=\sup_{\tau<r}\tau^{-\tilde b}\Big(\fint\lvert g(z)-g(w)\rvert^{q^*} d\Haus^1\llcorner (w+\ell_e\cap B(w,10\tau))(z)\Big)^\frac{1}{q^*},$$
where $q^*=q/(1-\alpha q)$.
Thanks to Proposition \ref{dorronsorosuper-density}, for $\Haus^1$-almost every $v\in \mathbb S^1$, $\Leb^1$-almost every $t\in \R$ and $\Haus^1$-almost every $w\in E\cap tJv+\R v$ we have 
$$\lim_{r\to 0}\gimel_b(w,r\,;v)=0\qquad\text{and}\qquad\lim_{r\to 0}\beth_{\tilde b}(w,r\,;v)=0\text{ provided }\alpha q\leq 1.$$
Hence, by Fubini's theorem, we conclude that for $\Haus^1$-almost every $v\in \mathbb{S}^1$ and for $\Leb^2$-almost every $w\in E$ we have 
$$\lim_{r\to 0}\gimel_b(w,r\,;v)=0\qquad\text{and}\qquad\lim_{r\to 0}\beth_{\tilde{b}}(w,r\,;v)=0\text{ provided }\alpha q\leq 1.$$
Let us analyze the behavior of $\daleth$. One immediately sees arguing as above that for $\Leb^2$-almost every $w\in \R^2$ we have 
$$\lim_{r\to 0}\daleth(w,r;e)\leq \int \frac{\lvert g(w+he)-g(w)\rvert^{q}}{\lvert h\rvert^{1+\alpha q}}\,dh=:\psi(g,w)<\infty.$$
Thanks to Severini-Egoroff's theorem, for every $\varepsilon>0$ there exists a compact set $K_0=K_0(\varepsilon,e)\subseteq E$ with $\Leb^2(E\setminus K_0)\leq \varepsilon\Leb^2(E)/3$ and a $\delta>0$ such that if $r\leq \delta$ then
$$\daleth(w,r;e)\leq 2\psi(g,x)\qquad\text{for every }w\in K_0.$$
This implies that 
\begin{equation}
    \int_{K_0} \daleth(w,r;e)d\Leb^2(w)\leq 2\int \int \psi(he+tJe)dh dt=2\int [g\lvert_{ t Je+\ell_e}]_{W^{\alpha,q}( t Je+\ell_e)}^qdt.
    \nonumber
\end{equation}
Thanks to Proposition \ref{restrictionW1,p} this implies that for $\Haus^1$-almost every $e\in \mathbb{S}^1$ we have that $\daleth(w,r;e)\in L^1(K_0)$ where we recall once again that $K_0$ depends also on $e$. If we perform the same argument with the set $JE$, we see that for $\Haus^1$-almost every $v\in \mathbb{S}^1$  there exists a compact set $K_1:=K_1(\varepsilon,v)\subseteq K_0$ such that $\Leb^2(E\setminus K_1)\leq 2\varepsilon\Leb^2(E)/3$ 
$$\daleth(w,r;v)\in L^1(K_1)\qquad\text{and}\qquad\daleth(w,r;Jv)\in L^1(K_1).$$
In turn this implies that
$$\lim_{r\to 0}\fint_{B(w,r)}\lvert \daleth(z,r;e_\iota)-\daleth(w,r;e_\iota)\rvert dz =0\text{ for $\Leb^2$-almost every $w\in K_1$ and $\iota=1,2$},$$
where $e_1=v$ and $e_2=Jv$.  

Furthermore, arguing as above by Severini-Egoroff's theorem we know that for every $\varepsilon>0$ there exists a compact set $K\subseteq K_1$ with $\Leb^2(E\setminus K)\leq \varepsilon\Leb^2(E)$ such that for every $\eta>0$ there exists $\delta>0$ such that if $r<\delta$ then 
\begin{equation}
    \begin{split}
        &\qquad\qquad\gimel_b(w,r\,;e_\iota)\leq \eta\quad\text{for $\Leb^2$-almost every $w\in K$ and $\iota=1,2$},\\
        &\beth_{\tilde b}(w,r\,;e_\iota)\leq \eta\quad\text{for $\Leb^2$-almost every $w\in K$ and $\iota=1,2$ provided $\alpha q\leq 1$},
        \label{smellozzola1}
    \end{split}
\end{equation}
and 
\begin{equation}
    \begin{split}
        \fint_{B(w,r)}\lvert \daleth(z,r;e_\iota) -\daleth(w,r;e_\iota)\rvert dz \leq \eta\quad\text{for $\Leb^2$-almost every $w\in K$ and $\iota=1,2$}.
        \nonumber
    \end{split}
\end{equation}
Select a density point $p$ of $K$ and note that for every $\eta>0$ there is $r_0=r_0(\eta)>0$ such that  whenever $r<r_0$
there are points $w_{1,r}, w_{2,r}\in K$ such that 
\begin{equation}
    \lvert p+(-1)^j(v+Jv)r-w_{j,r}\rvert\leq \eta r \qquad\text{ for }j=1,2.
    \label{smellozzola3}
\end{equation}
Notice that thanks to our choice of $K$, we can also assume that the points $w_{1,r}, w_{2,r}$ satisfy
$$\lvert \daleth(w_{j,r},r;e_\iota) -\daleth(p,r;e_\iota)\rvert\leq 2\eta\qquad\text{for }\iota=1,2.$$
This immediately shows, since $\daleth(w_j,r;e_\iota)=0$ on $K$ and $r<\delta$, that 
$$\daleth(w_{j,r},r;e_\iota)\leq (\psi(p)+2\eta)\qquad\text{for }\iota=1,2.$$
We can finally conclude by observing that this implies that for $\iota=1,2$ we have
\begin{equation}
\int_{-10r}^{10r} \int_{-10r}^{10r} \frac{\lvert g(w_{j,r}+he_\iota)-g(w_{j,r}+t e_\iota)\rvert^{q}}{\lvert h-t\rvert^{1+\alpha q}}\,dh\,dt\leq (\psi(p)+2\eta)r.
    \label{smellozzola2}
\end{equation}
Since $\varepsilon>0$ was arbitrary, we have shown that for $\Leb^2$-almost every $w\in E$, every $\eta>0$ and every $b<s$ there exists a $\delta=\delta(w,b,\eta)$ such that whenever $r<\delta$ we have point $w_{1,r}$ and $w_{2,r}$ in $E$ such that \eqref{smellozzola1}, \eqref{smellozzola3} and \eqref{smellozzola2} hold. 

However, this implies that for $\Leb^2$-almost every $w\in E$ and every $\eta$ there exists $r_i\leq \min\{\delta(w,s(1-i^{-1}),\eta),i^{-1}\}$ and points $w_{1,i}$ and $w_{2,i}$ in $E$ such that \eqref{smellozzola3} and \eqref{smellozzola2} hold and 
\begin{equation}
    \begin{split}
        &\qquad\qquad\gimel_{s(1-i^{-1})}(w_{\iota,i},r_i\,;e_\iota)\leq \eta\quad\text{for $\iota=1,2$},\\
        &\beth_{\alpha(1-i^{-1})}(w_{\iota,i},r_i\,;e_\iota)\leq \eta\quad\text{for $\iota=1,2$}\quad \text{provided }\alpha q \leq 1.
    \end{split}
\end{equation}
Finally, since $p\in E\subseteq B$, if $i$ is sufficiently big then 
\begin{equation}
\begin{split}
\gimel_b(w_{\iota,i},r_i\,;e_\iota)=&\sup_{\tau\leq r_i}\tau^{- b}\Big(\fint\lvert u(z)-u(w)\rvert^{1^*} d\Haus^1\llcorner (w+\ell_{e_\iota}\cap B(w,10\tau))(z)\Big)^\frac{1}{1^*}\\
    =&\sup_{\tau\leq r_i}\Big(\frac{\Haus(w_{\iota,i}+\ell_{e_\iota}\cap B(w_{\iota,i},10\tau)\setminus E)}{20\tau^{1+1^*b}}\Big)^\frac{1}{1^*}.
    \nonumber
\end{split}
\end{equation}
This concludes the proof.
\end{proof}

\begin{proposition}\label{frobeniustrong}
    Let $s\in (1/2,1)$ and let $g$ be a continuous $1$-form on $\R^2$ such that $\mathrm d g\in L^1_{loc}(\R^2)$. Let $B\subseteq \R^2$ be an open ball and $E$ be a Borel set such that $\mathbb{1}_E\in W^{s,1}(B)$. Then, if $g=0$ on $E$ then $\mathrm{d}g=0$ $\Leb^2$-almost everywhere on $E$.
\end{proposition}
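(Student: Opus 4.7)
The plan is to extract at almost every point of $E$ a sequence of small rectangles whose boundary is mostly inside $E$, apply the Stokes-type identity of Proposition~\ref{divergencethm} (so that $\int_{\mathscr P_i}\mathrm d g$ equals a boundary term which collapses to a tiny set because $g\equiv 0$ on $E$), and then divide by the area and invoke Lebesgue's differentiation theorem for $\mathrm d g\in L^1_{\mathrm{loc}}$. I would first apply Proposition~\ref{prop:rettangoli} to $E$, with any dummy choice of the auxiliary $W^{\alpha,q}$ function since only properties (i)--(iii) will be used, to obtain a direction $v\in\mathbb S^1$ and, for $\Leb^2$-a.e.\ $x\in E$, a sequence of axis-oriented rectangles $\mathscr P_i(x)$ with $r_i\downarrow 0$ satisfying
\[
  |\mathscr P_i(x)|\ge (2-4\varepsilon)^2 r_i^2,\qquad
  \Haus^1\bigl(\partial\mathscr P_i(x)\setminus E\bigr)\le 4\varepsilon\,r_i^{\,1+\frac{(1-i^{-1})s}{1-s}}.
\]
Restricting further to Lebesgue points of $\mathrm d g$ costs only another $\Leb^2$-null set; at each such $x$ the inclusion $x\in E$ together with $g\equiv 0$ on $E$ and the continuity of $g$ supply $M_x:=\sup\{|g(y)|:|y-x|\le r_1\}<\infty$.

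Next, each $\mathscr P_i(x)$ is a Lipschitz and hence finite-perimeter domain, so Proposition~\ref{divergencethm} applied to the continuous form $g$ (whose distributional differential is $L^1_{\mathrm{loc}}$ by hypothesis) yields
\[
  \int_{\mathscr P_i(x)}\mathrm d g \;=\; \int_{\partial\mathscr P_i(x)} g \;=\; \int_{\partial\mathscr P_i(x)\setminus E} g,
\]
where the second equality uses $g\equiv 0$ on $E$. A crude estimate then gives
\[
  \Bigl|\int_{\mathscr P_i(x)}\mathrm d g\Bigr|
  \;\le\; M_x\,\Haus^1\bigl(\partial\mathscr P_i(x)\setminus E\bigr)
  \;\le\; 4\varepsilon\,M_x\, r_i^{\,1+\frac{(1-i^{-1})s}{1-s}}.
\]

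To conclude, property (i) of Proposition~\ref{prop:rettangoli} places $x$ at distance at least $(1-4\varepsilon)r_i$ from every side of $\mathscr P_i(x)$ and property (ii) bounds the eccentricity, so the family shrinks regularly to $x$ and Lebesgue's differentiation theorem gives
\[
  |\mathrm d g(x)|\;=\;\lim_{i\to\infty}\frac{1}{|\mathscr P_i(x)|}\Bigl|\int_{\mathscr P_i(x)}\mathrm d g\Bigr|
  \;\le\; C(\varepsilon)\,M_x\,\limsup_{i\to\infty} r_i^{\,\frac{(1-i^{-1})s-(1-s)}{1-s}}.
\]
The exponent is strictly positive for $i$ large exactly when $s>1-s$, i.e.\ $s>1/2$, which is precisely the hypothesis. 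Hence the $\limsup$ vanishes and $\mathrm d g(x)=0$ for $\Leb^2$-a.e.\ $x\in E$.

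The real work is not in this argument but already packed into Proposition~\ref{prop:rettangoli}: the sharp threshold $s>1/2$ appears exactly when one matches the one-dimensional boundary defect $r_i^{1+s/(1-s)}$ against the two-dimensional area $r_i^2$, so that the residual contribution from $\partial\mathscr P_i(x)\setminus E$ becomes infinitesimally smaller than the bulk and disappears after normalization. If Proposition~\ref{prop:rettangoli} is taken as a black box, the only care needed here is the standard verification that the specific (possibly off-centered, slightly non-square) rectangles $\mathscr P_i(x)$ form an admissible basis for Lebesgue differentiation of $\mathrm d g$; this follows at once from (i) and (ii).
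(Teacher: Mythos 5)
Your proposal follows exactly the same route as the paper's own proof: apply Proposition~\ref{prop:rettangoli} (using only items (i)--(iii)), invoke Lebesgue differentiation for $\mathrm dg\in L^1_{\mathrm{loc}}$ along the regularly shrinking rectangles, then the Stokes-type identity of Proposition~\ref{divergencethm} together with $g\equiv 0$ on $E$ to reduce to the boundary defect, and finally observe that the exponent $-1+\frac{(1-i^{-1})s}{1-s}$ is eventually positive precisely when $s>1/2$. The only (minor) refinement you add is replacing the paper's $\lVert g\rVert_\infty$ by the local bound $M_x$, which is slightly cleaner since $g$ is only assumed continuous and need not be globally bounded, and you make explicit the elementary verification that the (i)--(ii) geometry makes $\{\mathscr P_i(x)\}$ an admissible differentiation basis; the paper leaves both implicit.
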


\begin{proof}
 Thanks to Proposition \ref{prop:rettangoli}, there exists $v\in \mathbb S^{1}$, a sequence of rectangles $\mathscr{P}_i(x)$ with sides $\mathbb{L}^i_{1,3}$ and $\mathbb{L}^i_{2,4}$ parallel to $v$ and $v^\perp$ respectively. Such rectangles are contained in the balls $B(x,2r_i)$, with $r_i\leq i^{-1}$,  and the following properties hold. 
\begin{itemize}
\item[(i)] If $\mathfrak{b}_i(x)$ denotes the barycenter of $\mathscr{P}_i(x)$, then 
$|\mathfrak{b}_i(x)-x|\leq 2\varepsilon\,r_i$;
\item[(ii)] If $\ell_{1,i}$ and $\ell_{2,i}$ denote the side lengths of $\mathscr{P}_i(x)$, then for each $j=1,2$ we have $|\ell_{j,i}-2r_i|\leq 4\varepsilon\,r_i$;
\item[(iii)]
$\Haus^1\bigl(\partial\mathscr{P}_i(x)\setminus E\bigr)\leq 4\varepsilon\,r_i^{\,1+(1-i^{-1})1^*s}$, where $1^*=1/(1-s)$.  
\end{itemize}
Thanks to Lebesgue's differentiation theorem we know that for $\Leb^2$-almost every $x\in E$ we have 
\begin{equation}
    \lim_{i\to \infty}\fint_{\mathscr{P}_i(x)}\mathrm{d}g(y)dy=\mathrm{d}g(x).
    \label{lebesguepercurlPi}
\end{equation}
By Proposition \ref{divergencethm} that  
\begin{equation}
\begin{split}
     \mathrm{d}g(x)=&\lim_{i\to \infty}\fint_{\mathscr{P}_i(x)}\mathrm{d}g(y)dy\\
     =&\lim_{i\to \infty}\frac{1}{\Leb^2(\mathscr{P}_i)}
     \int\langle \mathfrak{n}\wedge g,e_1\wedge e_2\rangle\,d\Haus^{1}\llcorner \partial^*\mathscr{P}_i\\
     =&\lim_{i\to \infty}\frac{1}{\Leb^2(\mathscr{P}_i)}
     \int\langle \mathfrak{n}\wedge g,e_1\wedge e_2\rangle\,d\Haus^{1}\llcorner (\partial^*\mathscr{P}_i\setminus E),
     \nonumber
\end{split}
\end{equation}
where the last identity comes from the fact that $g=0$ on $E$. Thanks to item (iii) and the above computation we infer that 
\begin{equation}
\begin{split}
     \lvert \mathrm{d}g(x)\rvert\leq &2\lim_{i\to \infty} r_i^{-2}\Haus^1(\partial^*\mathscr{P}_i\setminus E)\Big(\fint \lvert g\rvert d\Haus^1 \llcorner (\partial^*\mathscr{P}_i\setminus E)\Big)\\
     \leq &2\lim_{i\to \infty} 4\varepsilon\,r_i^{\,-1+(1-i^{-1})1^*s}\lVert g\rVert_\infty=0,
\end{split}
\end{equation}
where the last identity above comes from the fact that $-1+s/(1-s)>0$ whenever $s>1/2$.
\end{proof}

We are ready to state the main result of this section.

\begin{proposition}\label{prop:delicata2}
Let $\alpha\in (0,1)$, $s\in (0,1/2]$, $q\in[1,\infty]$ and let $E$ be a $\Leb^2$-measurable subset of $B$ such that $\mathbb{1}_E\in W^{s,1}(B)$. Suppose $g$ is a continuous $1$-form of class $W^{\alpha,q}(B)$ such that $\mathrm{d}g\in L^1_{loc}(B)$. If $\alpha> 1-2s+q^{-1}s$, then $g=0$ for $\Leb^2$-almost every $x\in E$ implies $\mathrm{d}g=0$ for $\Leb^2$-almost every $x\in E$.
\end{proposition}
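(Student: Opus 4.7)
The strategy parallels the proof of Proposition \ref{frobeniustrong}, with the $L^\infty$ estimate on the boundary integral replaced by sharper $L^{q^*}$- or H\"older-type bounds that exploit the explicit vanishing of $g$ on $E$. Since $g$ is continuous, $\{g=0\}$ is closed and the hypothesis gives $\Leb^2\bigl(E\setminus\{g=0\}\bigr)=0$; we may therefore replace $E$ by $E\cap\{g=0\}$, which agrees with $E$ almost everywhere and so represents the same element of $W^{s,1}(B)$, and assume $g\equiv 0$ on $E$. At $\Leb^2$-almost every $x\in E$, Proposition \ref{prop:rettangoli} then supplies rectangles $\mathscr P_i(x)$ of sidelength $r_i\downarrow 0$ with vertices $\mathfrak c^\kappa_i\in E$ (so that $g(\mathfrak c^\kappa_i)=0$), with $\Haus^1(\partial\mathscr P_i\setminus E)\leq 4\varepsilon\, r_i^{1+(1-i^{-1})s/(1-s)}$, and with fractional Sobolev control of $g$ on each side $\mathbb L^i_\kappa$. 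By Lebesgue's differentiation theorem, Stokes' theorem for rough forms (Proposition \ref{divergencethm}), and the reduction $g=0$ on $\partial\mathscr P_i\cap E$,
\begin{equation*}
|\mathrm d g(x)|\;=\;\lim_{i\to\infty}\frac{1}{\Leb^2(\mathscr P_i)}\,\Bigl|\int_{\partial\mathscr P_i\setminus E}\langle\mathfrak n\wedge g,\,e_1\wedge e_2\rangle\,d\Haus^1\Bigr|,
\end{equation*}
so the task reduces to showing $r_i^{-2}\int_{\partial\mathscr P_i\setminus E}|g|\,d\Haus^1\to 0$.

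When $\alpha q\leq 1$, the Sobolev conjugate $q^*=q/(1-\alpha q)$ is finite and item (v) of Proposition \ref{prop:rettangoli}, together with $g(\mathfrak c^\kappa_i)=0$, yields $\|g\|_{L^{q^*}(\mathbb L^i_\kappa)}\leq C\varepsilon\, r_i^{\tilde b+1/q^*}$ for any $\tilde b<\alpha$. H\"older's inequality on the bad part of each edge, combined with item (iii), then produces
\begin{equation*}
\int_{\partial\mathscr P_i\setminus E}|g|\,d\Haus^1
\;\lesssim\; r_i^{\tilde b+1/q^*}\bigl(r_i^{1+(1-i^{-1})s/(1-s)}\bigr)^{1-1/q^*}.
\end{equation*}
Dividing by $r_i^2$ and letting $i\to\infty$ and $\tilde b\uparrow\alpha$, elementary algebra reduces the resulting exponent of $r_i$ to $\alpha-\bigl(1-(2-q^{-1})s\bigr)$, which is strictly positive by hypothesis.

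The case $\alpha q>1$ is the main difficulty, since item (v) no longer applies, and the blunt pointwise Morrey estimate $\|g\|_{L^\infty(\mathbb L^i_\kappa)}\lesssim r_i^\alpha$ (from (iv) and vanishing at a single vertex) yields only the strictly suboptimal condition $\alpha>(1-2s)/(1-s)$. To recover the sharp threshold I will exploit the fine structure of the bad set: $U:=\mathbb L^i_\kappa\setminus E$ is open in $\mathbb L^i_\kappa$ and hence decomposes into countably many intervals $I_\beta$, and by continuity of $g$ together with the reduction $g\equiv 0$ on $E$, $g$ vanishes at both endpoints of every $I_\beta$. The one-dimensional Morrey embedding, applied to the bound in item (iv), gives $[g\vert_{\mathbb L^i_\kappa}]_{C^{\alpha-1/q}}\lesssim \psi(g,x)^{1/q}\,r_i^{1/q}$, so that $|g(z)|\lesssim r_i^{1/q}|I_\beta|^{\alpha-1/q}$ for $z\in I_\beta$. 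Summing and using $\sum_\beta|I_\beta|^{1+\alpha-1/q}\leq|U|^{1+\alpha-1/q}$ (valid since $\alpha-1/q>0$), one obtains $\int_U|g|\,d\Haus^1\lesssim r_i^{1/q}\,|U|^{1+\alpha-1/q}$; inserting $|U|\lesssim r_i^{1+(1-i^{-1})s/(1-s)}$ from item (iii) and dividing by $r_i^2$ yields, in the limit, the same sharp exponent $\alpha-\bigl(1-(2-q^{-1})s\bigr)>0$. The heart of the argument is thus this \emph{local-in-}$I_\beta$ Morrey bound, which replaces the wasteful global estimate by one calibrated to the small length $|U|$ rather than $r_i$.
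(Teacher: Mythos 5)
Your proposal is correct and runs on the same chassis as the paper's: Proposition~\ref{prop:rettangoli} to produce good rectangles, Lebesgue differentiation, Proposition~\ref{divergencethm} (Stokes for rough forms), and the reduction of the interior integral to a boundary integral over $\partial\mathscr{P}_i\setminus E$. In the regime $\alpha q\le 1$ you do exactly what the paper does: item~(v) gives the $L^{q^*}$ smallness of $g$ on the sides, and H\"older against item~(iii) closes the estimate. The genuine divergence is in the regime $\alpha q>1$ (the paper's Case~II and, for $q=\infty$, Case~III). The paper decomposes each side into the intervals $I_{j,\kappa}$ where $g\neq 0$, applies the fractional Poincar\'e inequality (Proposition~\ref{poincarefractional}) on each $I_{j,\kappa}$ where $g$ vanishes at the endpoints, and then uses H\"older's inequality for the sum over $j$ together with the superadditivity $\sum_j [g]^q_{W^{\alpha,q}(I_{j,\kappa})}\le[g]^q_{W^{\alpha,q}(\mathbb L_\kappa^i)}$ and item~(iv). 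You instead apply the scale-invariant Morrey embedding $W^{\alpha,q}\hookrightarrow C^{\alpha-1/q}$ on the whole side, use the vanishing of $g$ at one endpoint of each $I_\beta$ to bound $|g|$ by $[g]_{C^{\alpha-1/q}}\,|I_\beta|^{\alpha-1/q}$, and then exploit the same convexity estimate $\sum_\beta|I_\beta|^{p}\le(\sum_\beta|I_\beta|)^{p}$ for $p=1+\alpha-1/q>1$. This replaces the Poincar\'e-plus-H\"older pair by a single pointwise H\"older bound; it is slightly more elementary and makes the source of the gain (the localization to the small intervals $I_\beta$) more transparent. Both routes give the same final power of $r_i$ and the same threshold. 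Also, your case $q=\infty$ correctly falls into this second argument since $[g]_{C^\alpha}$ is then directly available, whereas the paper singles it out as Case~III; this is a genuine streamlining.

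Two small inaccuracies worth flagging (neither fatal): the limiting exponent you get after dividing by $r_i^2$ is $\bigl(\alpha-1+2s-s/q\bigr)/(1-s)$, not $\alpha-\bigl(1-(2-q^{-1})s\bigr)$; the $1/(1-s)$ factor is absorbed because the sign is what matters, but the formula as written is off. Second, with $E$ replaced by $E\cap\{g=0\}$ the complement $\mathbb L_\kappa^i\setminus E$ is still only measurable, not open; the clean way to express the decomposition is to set $U:=\mathbb L_\kappa^i\cap\{g\neq 0\}$ (open, contained in $\mathbb L_\kappa^i\setminus E$ up to a null set along a.e.\ line, and with $g$ vanishing at the interior endpoints of each $I_\beta$). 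The paper's own formulation of the decomposition in Case~II has the same small wrinkle, so this is cosmetic.
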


\begin{proof} Thanks to Proposition \ref{prop:rettangoli}, there exists $v\in \mathbb S^{1}$, a sequence of rectangles $\mathscr{P}_i(x)$ with sides $\mathbb{L}^i_{1,3}$ and $\mathbb{L}^i_{2,4}$ parallel to $v$ and $v^\perp$ respectively. Such rectangles are contained in the balls $B(x,2r_i)$, with $r_i\leq i^{-1}$,  and the following properties hold. 
\begin{itemize}
\item[(i)] If $\mathfrak{b}_i(x)$ denotes the barycenter of $\mathscr{P}_i(x)$, then 
$|\mathfrak{b}_i(x)-x|\leq 2\varepsilon\,r_i$;
\item[(ii)] If $\ell_{1,i}$ and $\ell_{2,i}$ denote the side lengths of $\mathscr{P}_i(x)$, then for each $j=1,2$ we have $|\ell_{j,i}-2r_i|\leq 4\varepsilon\,r_i$;
\item[(iii)]
$\Haus^1\bigl(\partial\mathscr{P}_i(x)\setminus E\bigr)\leq 4\varepsilon\,r_i^{\,1+(1-i^{-1})1^*s}$, where $1^*=1/(1-s)$.  
\item[(iv)] For every $\kappa=1,\ldots,4$, every $i\in\N$ and $\Leb^2$-almost every  there exists a constant $\psi(g,x)$ such that 
$$\int\int \frac{\lvert g(z)-g(y)\rvert^{q}}{\lvert z-y\rvert^{1+\alpha q}}d\Haus^1\llcorner \mathbb{L}_\kappa^i(z)\,d\Haus^1\llcorner \mathbb{L}_\kappa^i(y)\leq \psi(g,x)r_i,$$
\item[(v)] If 
$\alpha q\leq 1$ for every $\kappa=1,\ldots,4$ there are vertices $\mathfrak{c}_{i}^\kappa\in E$ such that
$$\Big(\fint \lvert g(z)-g(\mathfrak c_{i}^\kappa)\rvert^{q^*} d\Haus^1\llcorner \mathbb{L}^i_\kappa(z)\Big)^\frac{1}{q^*}\leq \varepsilon r_i^{(1-i^{-1})\alpha}.$$
\end{itemize}
Thanks to Lebesgue's differentiation theorem we know that for $\Leb^2$-almost every $x\in E$ we have 
\begin{equation}
    \lim_{i\to \infty}\fint_{\mathscr{P}_i(x)}\mathrm{d}g(y)dy=\mathrm{d}g(x).
    \label{lebesguepercurlPi}
\end{equation}
By Proposition \ref{divergencethm} that  
\begin{equation}
\begin{split}
     \mathrm{d}g(x)=&\lim_{i\to \infty}\fint_{\mathscr{P}_i(x)}\mathrm{d}g(y)dy\\
     =&\lim_{i\to \infty}\frac{1}{\Leb^2(\mathscr{P}_i)}
     \int\langle \mathfrak{n}\wedge g,e_1\wedge e_2\rangle\,d\Haus^{1}\llcorner \partial^*\mathscr{P}_i\\
     =&\lim_{i\to \infty}\frac{1}{\Leb^2(\mathscr{P}_i)}
     \int\langle \mathfrak{n}\wedge g,e_1\wedge e_2\rangle\,d\Haus^{1}\llcorner (\partial^*\mathscr{P}_i\setminus E),
     \nonumber
\end{split}
\end{equation}
where the last identity comes from the fact that $g=0$ on $E$.

\medskip

The proof proceeds by distinguishing three cases.

\medskip

\textsc{Case I: $\alpha q\leq  1$.} First assume that $\alpha q<1$. Thanks to item (iii) above we infer that for $\Leb^2$-almost every $w\in E$ we have 
\begin{equation}
    \begin{split}
        \lvert \mathrm{d}g(x)\rvert\leq &2\lim_{i\to \infty} r_i^{-2}\Haus^1(\partial^*\mathscr{P}_i\setminus E)\Big(\fint \lvert g\rvert d\Haus^1 \llcorner (\partial^*\mathscr{P}_i\setminus E)\Big)\\
        \overset{(iii)}{\leq} & 2 \lim_{i\to \infty} r_i^{-2}(r_i^{1+\frac{(1-i^{-1})s}{1-s}})\Big(\fint\lvert g \rvert^{q^*} d\Haus^1\llcorner (\partial^*\mathscr{P}_i\setminus E)\Big)^\frac{1}{q^*}\\
        =&2 \lim_{i\to \infty} r_i^{-2}\Big(r_i^{1+\frac{(1-i^{-1})s}{1-s}}\Big)^\frac{q^*-1}{q^*}\Big(\int\lvert g \rvert^{q^*}d\Haus^1\llcorner \partial^*\mathscr{P}_i\Big)^\frac{1}{q^*}
    \end{split}
\end{equation}
This implies in particular that, since $g(\mathfrak{c}_i^\kappa)=0$ for every $\kappa=1,\ldots,4$ we have 
\begin{equation}
    \begin{split}
        \lvert \mathrm{d}g(x)\rvert\leq&2 \lim_{i\to \infty} r_i^{-2}\Big(r_i^{1+\frac{(1-i^{-1})s}{1-s}}\Big)^\frac{q^*-1}{q^*}\Big(\sum_{\kappa=1}^4\int\lvert g(z)-g(\mathfrak{c}_i^\kappa) \rvert^{q^*}d\Haus^1\llcorner \mathbb{L}_\kappa^i\Big)^\frac{1}{q^*}\\
        \leq &8\lim_{i\to \infty} r_i^{-2}\Big(r_i^{1+\frac{(1-i^{-1})s}{1-s}}\Big)^\frac{q^*-1}{q^*}r_i^\frac{1}{q^*}\max_{\kappa=1,\ldots,4}\Big(\fint\lvert g(z)-g(\mathfrak{c}_i^\kappa) \rvert^{q^*}d\Haus^1\llcorner \mathbb{L}_\kappa^i\Big)^\frac{1}{q^*}\\
        &\qquad\qquad\overset{(v)}{\leq} 8\varepsilon r_i^{-2+\big({1+\frac{(1-i^{-1})s}{1-s}}\big)\frac{q^*-1}{q^*}+(1-i^{-1})\alpha+\frac{1}{q^*}}.
        \nonumber
    \end{split}
\end{equation}
Therefore, if the condition 
\begin{equation}
    -2+\big({1+\frac{s }{1-s}}\big)\frac{q^*-1}{q^*}+\alpha+\frac{1}{q^*}>0,
    \label{simanet1}
\end{equation}
holds, then $\mathrm{d}g(x)=0$ $\Leb^2$-almost everywhere. However one immediately sees  thanks to some algebraic computation, that \eqref{simanet1} is equivalent to $\alpha>1-2s+s/q$. 

Let us treat the case $\alpha q=1$. In this case the above computation reduces to 
\begin{equation}
\begin{split}
     \lvert \mathrm{d}g(x)\rvert\leq& 2\lim_{i\to \infty} r_i^{-1+\frac{(1-i^{-1})s}{1-s}}\lVert g\rVert_{L^\infty(\mathbb{L}_\kappa^i)}=2\lim_{i\to \infty} r_i^{-1+\frac{(1-i^{-1})s}{1-s}}\lVert g-g(\mathfrak{c}_\kappa^i)\rVert_{L^\infty(\mathbb{L}_\kappa^i)}\\
    \leq &2\lim_{i\to \infty} r_i^{-1+\frac{(1-i^{-1})s}{1-s}}r_i^{(1-i^{-1})\alpha}.
    \label{stimadgggg}
\end{split}
\end{equation}
Therefore, if 
$$\frac{-1+2s}{1-s}+\alpha>0,$$
then $\mathrm{d}g(x)=0$ for $\Leb^2$-almost everywhere. Thanks to the requirement $\alpha q=1$, we see with few algebraic computations, that \eqref{stimadgggg} is equivalent to $\alpha>1-2s+s/q$.  This concludes the proof of the first case.

\textsc{Case II: $1<\alpha q<  \infty$.} Let us notice that 
\begin{equation}
    \begin{split}
        \lvert \mathrm{d}g(x)\rvert\leq \lim_{i\to \infty} r_i^{-2}\int\lvert g\rvert d\Haus^1\llcorner \partial^* \mathscr{P}\leq \lim_{i\to \infty}r_i^{-2}\sum_{\kappa=1}^4\sum_{j\in\N} \int_{I_{j,\kappa}^i(r_i)} \lvert g\rvert d\Haus^1,
    \end{split}
\end{equation}
where with $I_{j,\kappa}^i(r_i)$ we denote the open segments of $\mathrm{int}\mathbb{L}_\kappa^\iota(r_i)$, which is the segment $\mathbb{L}_\kappa^\iota$ without its endpoints, such that 
$$\mathrm{int}\mathbb{L}_\kappa^i(r_i)\setminus \mathrm{supp}(g)=\bigcup_{j\in\N}I_{j,\kappa}^i(r_i).$$
Thanks to Proposition \ref{poincarefractional} we infer that 
\begin{equation}
    \begin{split}
         \lvert \mathrm{d}g(x)\rvert\leq &\lim_{i\to \infty}r_i^{-2}\sum_{\kappa=1}^4\sum_{j\in\N}\Leb^1(I_{j,\kappa}^i(r_i))^{1+\alpha-\frac{1}{q}}[g]_{W^{\alpha,q}(I_{j,\kappa}^i(r_i))}\\
         \leq &\lim_{i\to\infty}r_i^{-2}\sum_{\kappa=1}^4\Big(\sum_{j\in\N}\Leb^1(I_{j,\kappa}^i(r_i))^{(1+\alpha-1/q)q'}\Big)^\frac{1}{q'}\Big(\sum_{j\in\N}[g]_{W^{\alpha,q}(I_{j,\kappa}^i(r_i))}^q\Big)^\frac{1}{q}.
    \end{split}
\end{equation}
A simple computation shows that $q'(1+\alpha+1/q)\geq 1$ and hence 
\begin{equation}
    \begin{split}
        \lvert \mathrm{d}g(x)\rvert\leq\lim_{i\to \infty}r_i^{-2}\sum_{\kappa=1}^4\Big(\sum_{j\in\N}\Leb^1(I_{j,\kappa}^i(r_i))\Big)^{1+\alpha-1/q}\Big(\sum_{j\in\N}[g]_{W^{\alpha,q}(I_{j,\kappa}^i(r_i))}^q\Big)^\frac{1}{q}.
    \end{split}
\end{equation}
Thanks to item (iii), we infer that 
\begin{equation}
    \begin{split}
         \lvert \mathrm{d}g(x)\rvert&\leq\lim_{i\to \infty}r_i^{-2}\sum_{\kappa=1}^4\Big(\varepsilon r_i^{1+\frac{(1-i^{-1})s}{1-s}}\Big)^{1+\alpha-1/q}\Big(\sum_{j\in\N}[g]_{W^{\alpha,q}(I_{j,\kappa}^i(r_i))}^q\Big)^\frac{1}{q}\\
         &\leq \lim_{i\to \infty} r_i^{-2+\big(1+\frac{(1-i^{-1}s)}{1-s}\big)(1+\alpha-1/q)}\sum_{\kappa=1}^4[g]_{W^{\alpha,q}(\mathbb{L}_\kappa^i(r_i))}      \\
         &\overset{(iv)}{\leq} 4\psi(g,x)r_i^{-2+\big(1+\frac{(1-i^{-1}s)}{1-s}\big)(1+\alpha-1/q)+\frac{1}{q}}.
    \end{split}
\end{equation}
We see that if 
$$-2+\big(1+\frac{s}{1-s}\big)(1+\alpha-1/q)+\frac{1}{q}>0,$$
then $\mathrm{d}g(x)=0$ and the above inequality is easily seen to be equivalent to $\alpha >1-2s+s/q$.

\textsc{Case III: $q=  \infty$.} Since $g$ is $\alpha$-H\"older, we can estimate the sup-norm of $g$ on $\mathscr{P}_i$ as follows. By item (iii) we know that the biggest interval in which $g$ is non-zero on $\mathscr{P}_i$ has diameter $4\varepsilon\,r_i^{\,1+(1-i^{-1})s1^*}$. Hence 
\begin{equation}
    \lVert g\rVert_{L^\infty(\mathscr{P}_i)}\leq (4\varepsilon)^{\alpha}[g]_\alpha \, \,r_i^{\alpha+(1- i^{-1})s1^*\alpha}.
    \nonumber
\end{equation}
Thus, this implies in particular that 
\begin{equation}
\begin{split}
    & \qquad\qquad\lvert \mathrm{d}g(x)\rvert\leq \lim_{i\to \infty}\frac{(4\varepsilon)^{\alpha}[g]_\alpha \, \,r_i^{\alpha+(1- i^{-1})s1^*\alpha}}{\Leb^2(\mathscr{P}_i)}\Haus^{1}( \partial^*\mathscr{P}_i\setminus E)\\
     \overset{(iii)}{\leq}&\lim_{i\to\infty}\frac{(4\varepsilon)^{\alpha}[g]_\alpha \, \,r_i^{(1+(1- i^{-1})s1^*)(1+\alpha)}}{\Leb^2(\mathscr{P}_i)}\overset{(ii)}{\leq} (4\varepsilon)^{\alpha}[g]_\alpha \,\lim_{i\to \infty}r_i^{(1+(1- i^{-1})s1^*)(1+\alpha)-2}.
     \nonumber
\end{split}
\end{equation}
However it can be seen that since 
$$(1+(1- i^{-1})s1^*)(1+\alpha)-2< 0,$$
for every $i\in \N$, thanks to our choice of $\alpha$, we have 
$$\lvert \mathrm{d}g(x)\rvert\leq(4\varepsilon)^{\alpha}[g]_\alpha.$$
The arbitrariness of $\varepsilon$ concludes the proof.
\end{proof}

\section{Lusin-type results on Cantor sets with fractional boundary}\label{sectioncounterexamples}

\subsection{The statements}

This section is devoted to the proof of the variants of Lusin's theorem for gradients below, see \cite{zbMATH00021945}. The results below should be understood in the following sense. If we solve $Du=F$ for a Lipschitz datum $F$ on a set $\mathfrak{C}$ there are two quantities whose regularity necessarily trade off against each other: the regularity of the boundary of $\mathfrak{C}$ and that of $u$.



 \begin{theorem}\label{lusinregSobolev}
         Let $\eta,\varepsilon>0$, $q\in[1,\infty]$, $\alpha\in [0,1)$ and $0\leq s<q/(2q-1)$ such that 
         $$\alpha<1-\Big(2-\frac{1}{q}\Big)s.$$
         Let $\Omega$ be an open bounded set in $\R^k$ and suppose that $F:\Omega\times \R^{n-k}\to \R^{k\times {n-k}}$ is a locally Lipschitz map. 
         Then, there  are a compact set $\mathfrak{C}\subseteq \Omega$ and a function $u:\Omega\to \R^{n-k}$ such that
         \begin{itemize}
         \item[(i)]$\mathrm{supp}(u)\subseteq \Omega$, $\lVert u\rVert_{\infty}\leq \eta$ and $Du(x)=F(x,u(x))$ for $\Leb^k$-almost every $x\in \mathfrak{C}$;
             \item[(ii)] 
$\mathscr{L}^k(\Omega\setminus \mathfrak{C})\leq \varepsilon\mathscr{L}^k(\Omega)$ and $\mathbb{1}_{\mathfrak{C}}\in W^{s,1}(\Omega)$;
\item[(iii)] $u\in L^\infty(\Omega)\cap W^{1,q}(\Omega)$ and $Du\in W^{\alpha,q}(\Omega)$.
 \end{itemize}
In addition, if $0\leq s<1/2$ then $u$ is also of class $C^1_c(\Omega)$ and the identity 
$$Du(x)=F(x,u(x)) \qquad\text{holds everywhere on $\mathfrak{C}$}.$$
Finally, if $s=0$, then $u\in \bigcap_{0<\alpha<1}C^{1,\alpha}(\Omega)$. 
    \end{theorem}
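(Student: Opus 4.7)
The plan is to generalize Alberti's original iterative construction from \cite{zbMATH00021945} to the multiscale Cantor-type set $\mathfrak{C}$ of \S\ref{cantortypesets}. First I would choose the branching parameter $B\in\N$ and a geometric sequence $\rho_j=\delta\lambda^j$ with $\lambda\in(0,1)$ so small that the summability condition $\sum_j 2^{Bj}\rho_j^{1-s}<\infty$ of Proposition \ref{compatto} gives both $\mathbb{1}_{\mathfrak{C}}\in W^{s,1}(\Omega)$ and $\Leb^k(\Omega\setminus\mathfrak{C})\le\varepsilon\Leb^k(\Omega)$. The function $u$ is then built as a telescoping sum
$$u=\sum_{j=1}^{\infty} v_j, \qquad v_j:=\sum_{Q\in\Delta_j}\varphi_{M_Q,Q},$$
where each $\varphi_{M_Q,Q}$ is a fixed smooth bump, supported strictly inside $Q$, satisfying $D\varphi_{M_Q,Q}(c_Q)=M_Q$ and the scaling bounds $\|\varphi_{M_Q,Q}\|_\infty\lesssim |M_Q|\,r_j$ and $\|D^\ell\varphi_{M_Q,Q}\|_\infty\lesssim |M_Q|\,r_j^{1-\ell}$. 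The matrix is chosen inductively as $M_Q:=F(c_Q,u_{j-1}(c_Q))-Du_{j-1}(c_Q)$, so that the partial sum $u_j$ satisfies $Du_j(c_Q)=F(c_Q,u_j(c_Q))$ at every cube center of $\Delta_j$.

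\textbf{Convergence and the equation on $\mathfrak{C}$.} A Taylor-type argument combined with the Lipschitz bound on $F$ shows $|M_Q|\lesssim r_{j-1}$ for $Q\in\Delta_j$, so $\|v_j\|_\infty\lesssim r_{j-1}r_j$ and $\|Dv_j\|_\infty\lesssim r_{j-1}$. Thanks to the geometric decay \eqref{stimer_i}, both series converge absolutely; choosing $\delta$ small makes $\|u\|_\infty\le\eta$. For $x\in\mathfrak{C}$, the nested cubes $Q_j(x)\in\Delta_j$ containing $x$ have centers $c_{Q_j(x)}\to x$, and the defining identity $Du_j(c_{Q_j(x)})=F(c_{Q_j(x)},u_j(c_{Q_j(x)}))$ passes to the limit via Lebesgue-point arguments, yielding $Du(x)=F(x,u(x))$ almost everywhere on $\mathfrak{C}$. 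The inclusion $\mathrm{supp}(u)\subseteq\Omega$ is automatic from the strict containment of each bump support inside its cube.

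\textbf{The fractional Sobolev estimate for $Du$.} This is the crux of the argument and the place where the sharp exponent condition $\alpha<1-(2-1/q)s$ is forced. Writing $Du=\sum_j Dv_j$ and using disjointness of supports at each level,
$$[Du]_{W^{\alpha,q}(\Omega)}^q \,\lesssim\, \sum_{j=1}^\infty [Dv_j]_{W^{\alpha,q}(\Omega)}^q + (\text{cross-level terms}).$$
For a single scale $j$, the dominant contribution comes from pairs $(x,y)$ that sit on opposite sides of a single gap of width $\rho_j$ between two cubes of $\Delta_j$. A slicing computation in the spirit of Proposition \ref{restrictionW1,p}, combined with the bounds $\|Dv_j\|_\infty\lesssim r_{j-1}$ and $\mathrm{Card}(\Delta_j)\lesssim 2^{Bkj}$, produces an estimate of order
$$[Dv_j]_{W^{\alpha,q}(\Omega)}^q \,\lesssim\, 2^{Bkj}\,r_j^{\,k-1}\,r_{j-1}^{\,q}\,\rho_j^{\,1-\alpha q},$$
with a matching cross-level bound controlled by the same quantities via Cauchy--Schwarz. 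Plugging in $r_j\sim\delta 2^{-Bj}$ and $\rho_j=\delta\lambda^j$, one finds that summability is equivalent to a pair of linear inequalities in $B$, $\log\lambda$, $s$, $\alpha$, $q$; a direct algebraic manipulation shows that the system is consistent precisely when $\alpha<1-(2-1/q)s$ and $s<q/(2q-1)$. Since $\|Du\|_\infty<\infty$ we automatically get $u\in L^\infty(\Omega)\cap W^{1,q}(\Omega)$.

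\textbf{Enhanced regularity and main obstacle.} When $s<1/2$, the gap condition is slack enough to enforce $\|Dv_j\|_\infty\to 0$, so $\sum_j Dv_j$ is uniformly Cauchy, yielding $u\in C^1_c(\Omega)$; by continuity and density of centers, the identity $Du(x)=F(x,u(x))$ then holds \emph{everywhere} on $\mathfrak{C}$. When $s=0$, no constraint tying $\rho_j$ to the Cantor regularity survives, so $\rho_j$ can be chosen to decay arbitrarily fast; iterating a Campanato-type oscillation estimate upgrades $Du$ to $C^{0,\alpha}$ for every $\alpha<1$. The principal difficulty of the whole argument lies in the multiscale fractional Sobolev bookkeeping of the third step: one must balance four independent geometric parameters (the cube count $2^{Bkj}$, the cube side $r_j$, the parent-scale amplitude $r_{j-1}$, and the gap width $\rho_j$) against the two exponent parameters $(s,\alpha,q)$ so that the Cantor regularity $\mathbb{1}_{\mathfrak{C}}\in W^{s,1}$ and the gradient regularity $Du\in W^{\alpha,q}$ are realized by the \emph{same} choice of $\lambda$. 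Any lossy estimate in the cross-gap term degrades the final exponent, and preserving the equality $\alpha=1-(2-1/q)s$ is exactly what makes the counterexamples of Theorem \ref{lusin:intro} meet the Frobenius bound of Theorem \ref{main:intro:frobenius}.
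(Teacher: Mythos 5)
The core gap is in your choice of bump scaling, and it propagates into a wrong fractional Sobolev threshold.

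You postulate bumps $\varphi_{M_Q,Q}$ supported in $Q\in\Delta_j$ with $D\varphi_{M_Q,Q}(\mathfrak{c}(Q))=M_Q$ and $\lVert D^\ell\varphi_{M_Q,Q}\rVert_\infty\lesssim|M_Q|\,r_j^{1-\ell}$. But prescribing the gradient only at the center does not close the recursion $|M_{Q'}|\lesssim r_j$ for children $Q'\in\Delta_{j+1}$ of $Q$. Indeed $M_{Q'}=F(\mathfrak{c}(Q'),u_j(\mathfrak{c}(Q')))-Du_j(\mathfrak{c}(Q'))$, and $Du_j(\mathfrak{c}(Q'))=Du_{j-1}(\mathfrak{c}(Q'))+D\varphi_{M_Q,Q}(\mathfrak{c}(Q'))$; for a child $Q'$ away from the center of $Q$ the value $D\varphi_{M_Q,Q}(\mathfrak{c}(Q'))$ differs from $M_Q$ by as much as $|M_Q|\sim r_{j-1}$, so $|M_{Q'}|$ stalls at order $r_{j-1}$ instead of dropping to $r_j$. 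The paper resolves this by requiring the correction to be an exact affine map on each cube of $\Delta_{j+1}$ (via cutoffs $\sigma_Q^{j+1}$ equal to $1$ on $Q$ and vanishing a distance $\sim\rho_{j+1}$ away), so that $u_j$ is affine on $\mathfrak{f}(Q)$ with constant gradient $a^j_{\mathfrak{f}(Q)}$, and $a^{j+1}_Q-a^j_{\mathfrak{f}(Q)}=O(r_j)$ follows (estimate \eqref{stimadist}). That requirement then forces the transition of $D(\text{correction})$ to occur in the thin gap of width $\rho_{j+1}$, giving $\lVert D(\text{correction})\rVert_\infty\sim\rho_{j+1}^{-1}r_j^2$ and $\lVert D^2(\text{correction})\rVert_\infty\sim\rho_{j+1}^{-2}r_j^2$ — both much larger than your $r_{j-1}$ and $r_{j-1}r_j^{-1}$.

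This is not a cosmetic difference: your single-scale estimate $[Dv_j]_{W^{\alpha,q}}^q\lesssim 2^{Bkj}r_j^{k-1}r_{j-1}^q\rho_j^{1-\alpha q}$, combined with $r_j\sim 2^{-Bj}$ and a geometric $\rho_j\sim 2^{-Bj/(1-s)}$, yields the summability condition $\alpha<1-(1-\tfrac1q)s$, not $\alpha<1-(2-\tfrac1q)s$. That would produce counterexamples in the region where Theorem \ref{main:intro:frobenius} forces the tangency set to be null — a contradiction. The paper's estimate for the dominant piece $\Phi_j:=D\sigma_Q^{j+1}\otimes(a^{j+1}_Q-a^j_{\mathfrak{f}(Q)})[\cdot-\mathfrak{c}(Q)]$ is $[\Phi_j]_{W^{\alpha,q}}^q\lesssim\Leb^k(\mathfrak{C}_j\setminus\mathfrak{C}_{j+1})\,\rho_{j+1}^{-q(1+\alpha)}r_j^{2q}$, which carries the extra factor $(r_j/\rho_{j+1})^q$ relative to yours; that factor is exactly what tightens the condition to $\alpha<1-(2-\tfrac1q)s$. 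Your choice $\rho_j=\delta\lambda^j$ (a pure geometric sequence, versus the paper's $\rho_j\sim j^{-2/(1-s)}2^{-Bj/(1-s)}$) is fine in the open regime, but it must be coupled with the correct gap-scaled bump estimates for the arithmetic to come out right.
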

    
In addition, we also provide the following extremal result.

\begin{theorem}\label{theoremlusin2}
      Let $\eta,\varepsilon>0$ and $d<k$. Let $\Omega$ be an open bounded set in $\R^k$ and suppose that $F:\Omega\times \R^{n-k}\to \R^{k\times {n-k}}$ is a locally Lipschitz map. 
         Then, there  are a compact set $\mathfrak{C}\subseteq \Omega$ and a function $u:\Omega\to \R^{n-k}$ such that
         \begin{itemize}
         \item[(i)]$\mathrm{supp}(u)\subseteq \Omega$, $\lVert u\rVert_{\infty}\leq \eta$ and $Du(x)=F(x,u(x))$ for every $x\in \mathfrak{C}$;
             \item[(ii)] $\mathrm{dim}_\Haus(\mathfrak{C})=d$;
\item[(iii)] $u$ is of class $C^{1,1}(\Omega)$.
 \end{itemize}
\end{theorem}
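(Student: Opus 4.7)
My plan is to adapt the iterative Lusin-for-gradients construction used in Theorem~\ref{lusinregSobolev}, replacing the Cantor-type set of positive Lebesgue measure by a self-similar fractal of prescribed Hausdorff dimension $d$, in the spirit of Proposition~\ref{dimensione}. The upgrade from $\bigcap_{\alpha<1} C^{1,\alpha}(\Omega)$ (available for $s=0$ in Theorem~\ref{lusinregSobolev}) to full $C^{1,1}$ regularity is paid for by dropping positive Lebesgue measure of the tangency set, while keeping Hausdorff dimension exactly $d$.

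First, given $d\in(0,k)$, I would choose the parameters $B$ and $\{\rho_j\}$ of Section~\ref{cantortypesets} so that the gap widths are proportional to the parent side-lengths, namely $\rho_j = a\,r_{j-1}$ for a fixed constant $a\in(0,2^{-B})$. The recursion $r_j = 2^{-B}r_{j-1}-\rho_j$ then yields the constant ratio $r_j/r_{j-1} = 2^{-B}-a$, so $\mathfrak{C}$ becomes the attractor of an iterated function system of $2^k$ similarities of common ratio $2^{-B}-a$ satisfying the open set condition; selecting $a$ so that $2^{-B}-a = 2^{-k/d}$ (possible as soon as $B<k/d$, hence for $B=1$) forces $\mathrm{dim}_{\Haus}(\mathfrak{C}) = d$ with positive $\Haus^d$-content, in the spirit of Proposition~\ref{dimensione}.

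Next, I would construct $u$ as a telescoping series $u = \sum_{j\ge 1} v_j$ following the scheme of Theorem~\ref{lusinregSobolev}: each bump $v_j$ is smooth, supported in the annular region $\mathfrak{C}_{j-1}\setminus\mathfrak{C}_j$, and engineered so that the partial sum $u_j = \sum_{i\le j} v_i$ is affine on every cube $Q\in\Delta_j$ with slope $F(c_Q,u_{j-1}(c_Q))$. The self-referential nature of $Du = F(\cdot,u)$ is handled, exactly as in the proof of Theorem~\ref{lusinregSobolev}, via a fixed-point iteration together with the smallness $\|u\|_\infty\le\eta$ enforced by shrinking the base scale $\delta$. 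Standard cut-off constructions over a gap of width $\rho_j$, combined with the Lipschitz bound $L$ for $F$, yield $\|Dv_j\|_\infty \lesssim L\,r_{j-1}$ and, crucially, $\|D^2v_j\|_\infty \lesssim L\,r_{j-1}/\rho_j = L/a$. Since the supports of the $D^2 v_j$ for different $j$ are pairwise disjoint (they live in nested gap annuli of different generations), one concludes $\|D^2 u\|_\infty = \sup_j \|D^2 v_j\|_\infty \lesssim L/a$, so $u\in C^{1,1}(\Omega)$.

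The main obstacle is the coexistence of conditions \emph{(ii)} and \emph{(iii)}: shrinking $\mathrm{dim}_{\Haus}(\mathfrak{C})$ below $k$ requires widening the gaps $\rho_j$ relative to the cubes, which is exactly the same scaling needed to keep $\|D^2 v_j\|_\infty$ uniformly controlled in $j$. The compatibility $\rho_j \asymp r_{j-1}$ is what realizes both constraints at once, and this is precisely what makes $C^{1,1}$ regularity reachable as soon as one sacrifices positive Lebesgue measure. The remaining work is essentially bookkeeping: ensuring that the fixed-point argument for the implicit slope prescription $F(c_Q, u_{j-1}(c_Q))$ closes up correctly across generations, that the series defining $u$ converges in $C^{1,1}$, and that the resulting $u$ is compactly supported in $\Omega$ with $Du(x) = F(x,u(x))$ holding at \emph{every} (not just almost every) $x\in\mathfrak{C}$, as required by the statement.
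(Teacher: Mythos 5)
Your proposal is correct and follows essentially the same route as the paper: §\ref{Caso2} also prescribes gaps proportional to parent side-lengths ($\rho_\iota = \delta(1-\lambda)\lambda^\iota 2^{-B\iota}$, equivalently $\rho_j = (1-\lambda)2^{-B}\,r_{j-1}$, which is exactly your $\rho_j = a\,r_{j-1}$ with $a=(1-\lambda)2^{-B}$), invokes Proposition~\ref{dimensione} to identify $\mathfrak{C}$ as a Hutchinson self-similar set of dimension $d$, and reuses the iterative gradient-prescription machinery of Proposition~\ref{prop5.3}. The only stylistic difference is in how you close the $C^{1,1}$ estimate: you observe that the $D^2 v_j$ have essentially disjoint supports across generations and are uniformly bounded by $\lesssim L/a$, whereas the paper instead plugs $\alpha=1$, $q=\infty$, $s=0$ into the telescoping Hölder bound \eqref{stimagradientsia} from §\ref{Caso1}; these are two phrasings of the same estimate (the proportionality $\rho_j\asymp r_{j-1}$ makes both $\rho_j^{-2}r_j^2$ and $\rho_j^{-1}r_j$ uniformly bounded), so the content is identical.
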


\subsection{Construction of functions with prescribed gradient.}\label{section:construction:lusin}

In this subsection we will prove a weaker version of Theorems \ref{lusinregSobolev} and \ref{theoremlusin2}. We will limit ourselves to prove that the constructed functions are $C^1_c$ independently on how the Cantor-type set $\mathfrak{C}$ is constructed. 

First of all we need to introduce some general notation that will be fixed throughout the rest of the section.

Let $\eta,\varepsilon>0$ and $\Omega$ be a bounded open set, let $\delta_0>0$, see \S\ref{cubes},
and let $\Omega'\subseteq \Omega$ be an open set containing the cubes $\Delta_0(\delta_0/2)$, see \S\ref{cubes} and for which 
$K:=\mathrm{cl}(\Omega')\subseteq \Omega$.
Suppose that $F:\Omega\times \R^{n-k}\to \R^{k\times {n-k}}$ is a locally Lipschitz map. We let
\begin{equation}
    M_1:=\lVert F\rVert_{\infty,K\times[-1,1]^{n-k}}\qquad\text{and}\qquad M_2:=\Lip(F,K\times[-1,1]^{n-k}),
    \label{eq:def:MMi}
\end{equation}
and fix 
$$\delta\leq\frac{ \min\{\eta,\mathrm{dist}(\Omega^c,\mathrm{cl}(\Omega')), \delta_0\}}{10 M_2k^\frac{3}{2}(2+12M_1\pi^2k^\frac{3}{2}+2M_1)}.$$
Finally, let $B\in \N$ and we fix an infinitesimal sequence $\boldsymbol{\rho}$ satisfying the hypothesis imposed in \eqref{ipotesirho1} and
\eqref{ipotesisurho} and let $\mathfrak{C}$ be the compact set constructed in \S\ref{defCompatto} with respect to the sequence $\boldsymbol{\rho}=(\rho_j)_{j\in\N}$. We keep the same notation of Definition \ref{definizionecubani} also for the sequence of sides of cubes $r_i$, $i\in\N\setminus\{0\}$.

\begin{proposition}\label{prop5.3}
If $\sum_{\iota=1}^\infty \rho_\iota^{-1}r_\iota^2<\infty$ then there exists a function $u$ of class $C^1_c(\Omega)$ such that 
\begin{equation}
\lVert u\rVert_\infty\leq \eta\qquad\text{and}\qquad Du(x)=F(x,u(x))\text{ on }\mathfrak{C}.
\label{lusin}
\end{equation}
On the other hand, if $\sum_{\iota\in \N} 2^{Bk\iota}r_\iota^{2q+k-1}\rho_{\iota+1}^{1-q}<\infty$ then $u\in W^{1,q}(\Omega)$, \eqref{lusin} holds $\Leb^n$-almost everywhere. 
\end{proposition}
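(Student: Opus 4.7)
The plan is to define $u$ by an iterative construction that adds, at every generation $j$ and every cube $Q\in\Delta_j$, a smooth gradient-prescribing bump $\psi_Q$ supported in $Q$. Setting $u_0:=0$, at step $j$ define the defect $g_Q:=F(p_Q,u_{j-1}(p_Q))-Du_{j-1}(p_Q)$ and let
\[
  \psi_Q(x):=g_Q(x-p_Q)\,\chi_Q(x),\qquad u_j:=u_{j-1}+\sum_{Q\in\Delta_j}\psi_Q,
\]
where $\chi_Q\in C^\infty_c(Q)$ is a cutoff identically equal to $1$ on the concentric sub-cube of $Q$ of side $r_j-\rho_{j+1}$ (which, by the very construction of $\Delta_{j+1}$, contains every descendant of $Q$) and decaying to $0$ across the buffer of width $\rho_{j+1}/2$ separating this sub-cube from $\partial Q$, so that $|\nabla\chi_Q|\lesssim\rho_{j+1}^{-1}$. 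Then $\psi_Q(p_Q)=0$ and $\nabla\psi_Q(p_Q)=g_Q$, which propagates to generation $j$ the identity $Du_j(p_Q)=F(p_Q,u_j(p_Q))$. The structural feature that makes the whole induction work is that $\psi_Q$ is \emph{exactly affine} on the core of $Q$: this forces $\nabla u_j$ to be constant on every descendant of $Q$ and in particular yields the bookkeeping identity $Du_{j-1}(p_Q)=Du_{j-1}(p_{\mathfrak f(Q)})$ for every child $Q$ of its parent $\mathfrak f(Q)\in\Delta_{j-1}$.

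Combining this identity with the Lipschitzianity of $F$ gives $|g_Q|\lesssim r_{j-1}$ and hence $\|\psi_Q\|_\infty\lesssim r_{j-1}r_j$, $\|\nabla\psi_Q\|_\infty\lesssim r_{j-1}r_j/\rho_{j+1}$. Since $\{\mathrm{supp}(\psi_Q)\}_{Q\in\Delta_j}$ is a disjoint family, $\|u_j-u_{j-1}\|_{C^1(\Omega)}\lesssim r_{j-1}r_j/\rho_{j+1}$, and the standing monotonicity assumptions of \S\ref{decayofrho} together with the hypothesis $\sum_\iota\rho_\iota^{-1}r_\iota^2<\infty$ make this series summable; thus $\{u_j\}$ is Cauchy in $C^1(\Omega)$ and converges to some $u\in C^1_c(\Omega)$. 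The bound $\|u\|_\infty\leq\eta$ and the inclusion $\mathrm{supp}(u)\subseteq\Omega$ follow from the smallness of $\delta$ fixed at the outset. For $x\in\mathfrak C$, let $Q_j\in\Delta_j$ be the generation-$j$ cube containing $x$: then $p_{Q_j}\to x$ and $Du_j(p_{Q_j})=F(p_{Q_j},u_j(p_{Q_j}))$ for every $j$, so passing to the limit via $C^1$-convergence and continuity of $Du$, $u$, and $F$ yields $Du(x)=F(x,u(x))$ on $\mathfrak C$.

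For the $W^{1,q}$ statement, one observes that $\nabla u$ vanishes outside $\mathfrak C_1$ and that, on each annular set $\mathfrak C_\iota\setminus\mathfrak C_{\iota+1}$, it equals the gradient of the buffer part of the unique generation-$\iota$ correction containing the point plus a bounded telescopic sum; hence $|\nabla u|\lesssim 1+r_{\iota-1}r_\iota/\rho_{\iota+1}$ on this annulus. Combining with the volume estimate from \eqref{stimamassadiffCC}, namely $\Leb^k(\mathfrak C_\iota\setminus\mathfrak C_{\iota+1})\lesssim 2^{Bk\iota}\,r_\iota^{k-1}\,\rho_{\iota+1}$, one obtains
\[
  \int_{\mathfrak C_\iota\setminus\mathfrak C_{\iota+1}}|\nabla u|^q\,dx\;\lesssim\;2^{Bk\iota}\,r_\iota^{2q+k-1}\,\rho_{\iota+1}^{1-q},
\]
and summing in $\iota$ shows $\nabla u\in L^q(\Omega)$ exactly under the stated condition $\sum_\iota 2^{Bk\iota}r_\iota^{2q+k-1}\rho_{\iota+1}^{1-q}<\infty$. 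The PDE $Du=F(\cdot,u)$ then holds at every Lebesgue point of $\nabla u$ inside $\mathfrak C$ by the same limiting argument, hence $\Leb^n$-a.e.\ on $\mathfrak C$.

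The main subtlety of the argument is the calibration of the core/buffer decomposition of $\chi_Q$: the core must be wide enough to contain every descendant of $Q$, otherwise the affineness at previous-generation centers is lost and the bookkeeping in the induction collapses, while the buffer must be as narrow as $\rho_{j+1}$ in order to produce the $\rho_{j+1}^{-1}$ factor in the gradient estimates. Failing this balance by constant factors spoils the clean matching between the geometric parameters of the Cantor set $\mathfrak C$ and the summability conditions of the statement, and is ultimately what produces the two distinct regimes ($C^1_c$ vs $W^{1,q}$) appearing in the proposition.
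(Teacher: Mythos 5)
Your construction is essentially the paper's: an iterative scheme $u_0=0$, $u_j=u_{j-1}+\sum_{Q\in\Delta_j}\psi_Q$ with $\psi_Q=(\text{linear defect})\times(\text{cutoff})$, the defect chosen exactly so that the gradient is constant on the core of each cube and the pointwise identity $Du_j(p_Q)=F(p_Q,u_j(p_Q))$ telescopes across generations. Your $g_Q=F(p_Q,u_{j-1}(p_Q))-Du_{j-1}(p_Q)$ is literally the paper's coefficient $a_Q^{\iota+1}-a^{\iota}_{\mathfrak f(Q)}$ rewritten, since $Du_{j-1}\equiv a^{\iota}_{\mathfrak f(Q)}$ on $\mathfrak f(Q)$; the verification of the $C^1$ limit and of the pointwise constraint on $\mathfrak C$ is the same. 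Two cosmetic deviations: you support $\chi_Q$ strictly inside $Q$ with buffer of width $\sim\rho_{j+1}$ at $\partial Q$, while the paper lets $\sigma_Q^{j}$ spill over $\partial Q$ by $\sim\rho_j$; this shifts one index on $\rho$ in the gradient estimates (your $\rho_{j+1}^{-1}$ vs.\ the paper's $\rho_j^{-1}$), which is harmless given the monotonicity/comparability assumptions of \S\ref{decayofrho}. And for the $W^{1,q}$ statement you integrate $|\nabla u|^q$ directly over the annuli $\mathfrak C_\iota\setminus\mathfrak C_{\iota+1}$ (exploiting that, with your interior cutoffs, $\nabla u=\nabla u_\iota$ exactly there), whereas the paper estimates $\|Du_{\iota+1}-Du_\iota\|_{L^q}^q$ and sums the telescope; these give the same summability condition. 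One place you are a bit quick: the line ``the PDE then holds at every Lebesgue point of $\nabla u$ by the same limiting argument'' needs a touch more care in the $W^{1,q}$ regime, since $C^1$-convergence is no longer available — one should either pass to an a.e.\ convergent subsequence of $Du_j$ or run the averaging argument the paper gives — but the intent is clear and this is not a conceptual gap.
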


\begin{proof}
    The construction of such $u$ is an iterative process and in order to get a consistent notation we set $u_0:=0$.

\medskip

\textsc{Base Step}
For every $Q\in\Delta_1$, see Definition \ref{definizionecubani}, let
\begin{enumerate}
    \item[(a)] $a_Q^1:=F(\mathfrak{c}(Q),0)$,
    \item[(b)] $\sigma_Q^1$ be a smooth cut off function such that $\lVert \sigma_Q^1\rVert_\infty\leq 1$, $\sigma_Q^1\equiv 1$ on $Q$, $\sigma_Q^1\equiv 0$ outside the cube with the same center as $Q$ and side length $\ell(Q)+\rho_1/2=\delta-\rho_1/2$  and such that $\lVert D\sigma_Q^1\rVert\leq 4k\rho_1^{-1}$ and $\lVert D^2\sigma_Q^1\rVert\leq 8k\rho_1^{-2}$.
\end{enumerate}
We define the map $u_1:\R^{k}\to \R^{n-k}$ as
$$u_1(x):=\sum_{Q\in\Delta_1} \sigma_Q^1(x) a_Q^1[x-\mathfrak{c}(Q)].$$
The function $u_1$ is obviously smooth and its support is contained in $\Omega$. The supremum norm of $u_1$ can be estimated as follows. Let $x\in \Omega$ and let us first note that if there does not exists a $Q\in\Delta_1$ such that $x\in\mathrm{supp} (\sigma_Q^1)$ then $u_1(x)=0$. Otherwise, since the supports of the $\sigma_Q$s are pairwise disjoint, we deduce that
\begin{equation}
    \lvert u_1(x)\rvert=\Big\lvert\sum_{Q\in\Delta_1} \sigma^1_Q(x)a_Q^1[x-\mathfrak{c}(Q)]\Big\rvert\leq  \lvert a_{Q}^1[x-\mathfrak{c}(Q)]\rvert\leq M_1\sqrt{k}\,r_1 
    \label{eq:supnormu}
\end{equation}
We now turn our attention to the estimate of the supremum norm of the gradient of $u_1$. Like in the study of the supremum norm of $u_1$ we can assume that $x\in\Omega$ is contained in $\mathrm{supp}(\sigma_Q^1)$ for some $Q\in\Delta_1$. Then
\begin{equation}
    \begin{split}
        \lvert Du_1(x)\rvert=& \lvert D\sigma^{1}_{Q}(x)\otimes a_{Q}^1[x-\mathfrak{c}(Q)]+ \sigma^{1}_{Q}(x)a_Q^1\rvert\\
        \leq& 8k\rho_1^{-1} \cdot M_1 \sqrt{k}r_1+M_1\leq (8 k^\frac{3}{2}\rho_1^{-1}r_1+1)M_1,
    \end{split}
\end{equation}
where $M_1$ is the supremum norm of $F$ introduced in \eqref{eq:def:MMi}.

Finally, since $Du_1$ coincides with $a_Q$ on $Q$ for every $Q\in\Delta_1$, we conclude that for any $x\in\cup\{Q:Q\in\Delta_1\}$ we have
\begin{equation}
    \begin{split}
        \lvert Du_1(x)-F(x,u_1(x))\rvert=&\lvert a_{Q}^1-F(x,u_1(x))\rvert=\lvert F(\mathfrak{c}(Q),0) -F(x,u_1(x))\rvert\\
        \leq& M_2(\lvert x-\mathfrak{c}(Q)\rvert+\lvert u_1(x)\rvert)\overset{\eqref{eq:supnormu}}{\leq} M_2\sqrt{k}(M_1+1)r_1.
        \nonumber
    \end{split}
\end{equation}
 Notice that $u_1$ coincides with $a_Q^1[\cdot-\mathfrak{c}(Q)]$ on each $Q$. 
This concludes the base step.

\medskip

\textsc{Inductive Step} Let us assume that we have defined inductively $u_1,\ldots,u_\iota$ satisfy the following condition. For every $\iota\in \N$ we have 
$$\lVert u_\iota\rVert_{L^\infty}\leq 4M_1\sqrt{k}\sum_{j=1}^{\iota} r_j.$$
Let us construct the function $u_{\iota+1}$. 
Similarly to the construction of $u_1$, for every $Q\in\Delta_{\iota+1}$ we let
\begin{enumerate}
\item[($\text{a}^\prime$)] $a_Q^{\iota+1}:=F(\mathfrak{c}(Q),u_{\iota}(\mathfrak{c}(Q)))$,
    \item[($\text{b}^\prime$)] $\sigma_Q^{\iota+1}$ be a smooth function such that $\lVert \sigma_Q^{\iota+1}\rVert_\infty\leq 1$, $\sigma_Q^{\iota+1}\equiv 1$ on $Q$, $\sigma_Q^{\iota+1}\equiv 0$ outside the cube with the same center as $Q$ and side length    
$$\ell(Q)+\rho_{\iota+1}/2=r_{\iota+1}/2-\rho_{\iota+1}/2,$$
and such that $\lVert D\sigma_Q^{\iota +1}\rVert\leq 8k\rho_{\iota+1}^{-1}$ and $\lVert D^2\sigma_Q^{\iota +1}\rVert\leq 8k\rho_{\iota+1}^{-2}$.
\end{enumerate}
We define the map $u_{\iota+1}:\R^{k}\to \R^{n-k}$ as
$$u_{\iota+1}(x):=u_\iota(x)+\sum_{Q\in\Delta_{\iota+1}}\sigma^{\iota+1}_{Q}(x)(a^{\iota+1}_{Q}-a^\iota_{\mathfrak{f}(Q)})[x-\mathfrak{c}(Q)],$$
where $\mathfrak{f}(Q)$ is the father cube of $Q$ that was introduced in Definition \ref{definizionecubani}. 

Let us check that the inductive hypothesis is satisfied. 
Let us note that 
\begin{equation}
    \lVert a^{\iota+1}_{Q}-a^\iota_{\mathfrak{f}(Q)}\rVert\leq 2M_1,
    \label{eq:stimagrezzaa}
\end{equation}
which by definition of $u_\iota$ implies that
$$\lVert u_{\iota+1}-u_\iota\rVert_{L^\infty}\leq 4M_1 \sqrt{k}r_{\iota+1}.$$
Since by inductive hypothesis, we have $\lVert u_\iota\rVert_{L^\infty}\leq 4M_1\sqrt{k}\sum_{j=1}^{\iota-1} r_j$, the above discussion implies that
$$\lVert u_{\iota+1}\rVert_{L^\infty}\leq 4M_1\sqrt{k}\sum_{j=1}^{\iota+1} r_j,$$
which verifies the inductive hypothesis.
Notice that thanks to the above estimates, and since $r_\iota$ is summable, we know that the sequence $u_\iota$ converges in $L^\infty$ to some $u\in L^\infty$.

Let us now focus on the first order regularity of $u$. In order to do so, notice that 
$$Du_{\iota+1}-Du_\iota=\sum_{Q\in \Delta_{\iota+1}} D\sigma^{\iota+1}_{Q}(x)(a^{\iota+1}_{Q}-a^\iota_{\mathfrak{f}(Q)})[x-\mathfrak{c}(Q)]+\sigma^{\iota+1}_{Q}(x)(a^{\iota+1}_{Q}-a^\iota_{\mathfrak{f}(Q)})$$
We need to refine the estimate on $\lVert a^{\iota+1}_{Q}-a^\iota_{\mathfrak{f}(Q)}\rVert$:
\begin{equation}
    \begin{split}
         & \qquad\quad\lVert a^{\iota+1}_{Q}-a^\iota_{\mathfrak{f}(Q)}\rVert=\lVert F(\mathfrak{c}(Q),u_\iota(\mathfrak{c}(Q)))-F(\mathfrak{c}(\mathfrak{f}(Q)),u_{\iota-1}(\mathfrak{c}(\mathfrak{f}(Q))))\rVert\\
&\qquad\qquad\qquad\leq M_2\lvert\mathfrak{c}(Q)-\mathfrak{c}(\mathfrak{f}(Q))\rvert+M_2\lvert u_\iota(\mathfrak{c}(Q))-u_{\iota-1}(\mathfrak{c}(\mathfrak{f}(Q)))\rvert\\
&\qquad\qquad\qquad\leq 2M_2\sqrt{k} r_{\iota}+M_2\lvert u_\iota(\mathfrak{c}(Q))-u_{\iota-1}(\mathfrak{c}(\mathfrak{f}(Q)))\rvert.
    \end{split}
\end{equation}
Notice that on $\mathfrak{f}(Q)$ the function $u_\iota$ coincides with the linear map 
$$u_{\iota-1}(\mathfrak{c}(\mathfrak{f}(Q)))+(a_{\mathfrak{f}(Q)}^\iota-a_{\mathfrak{f}(\mathfrak{f}(Q))}^{\iota-1})[\cdot-\mathfrak{c}(\mathfrak{f}(Q))].$$ 
Thus 
\begin{equation}
    \begin{split}
        &\lVert a^{\iota+1}_{Q}-a^\iota_{\mathfrak{f}(Q)}\rVert\leq 2M_2 \sqrt{k} r_{\iota}+M_2\lVert a_{\mathfrak{f}(Q)}^\iota-a_{\mathfrak{f}(\mathfrak{f}(Q))}^{\iota-1}\rVert\lvert\mathfrak{c}(Q) -\mathfrak{c}(\mathfrak{f}(Q))\rvert\\
        &\leq 2M_2 \sqrt{k}r_{\iota}+M_2\sqrt{k}r_\iota\lVert a_{\mathfrak{f}(Q)}^\iota-a_{\mathfrak{f}(\mathfrak{f}(Q))}^{\iota-1}\rVert\overset{\eqref{eq:stimagrezzaa}}{\leq }2M_2\sqrt{k}(M_1+1)r_\iota.
        \label{stimadist}
    \end{split}
\end{equation}
Thanks to this bound and to the explicit expression for $Du_{\iota+1}-Du_\iota$ we infer the following bounds. 
Given a cube $Q\in \Delta_{\iota+1}$, if $x\in \mathrm{supp}(\sigma_Q^{\iota+1})\setminus Q$ then
\begin{equation}
    \begin{split}
        \lvert Du_{\iota+1}(x)-Du_\iota(x)\rvert&\leq 8k\rho_{\iota+1}^{-1}\cdot 2M_2\sqrt{k}(M_1+1)r_\iota \cdot \sqrt{k} r_\iota+2M_2\sqrt{k}(M_1+1)r_\iota\\
        &\leq 16k^2(M_1+1)M_2(\rho_{\iota+1}^{-1}r_\iota^2+r_\iota).
        \label{DeltaDuno}
    \end{split}
\end{equation}
and if $x\in Q$ then
\begin{equation}
    \begin{split}
        \lvert Du_{\iota+1}(x)-Du_\iota(x)\rvert&\leq 2M_2\sqrt{k}(M_1+1)r_\iota.
        \label{DeltaDdue}
    \end{split}
\end{equation}
Notice that if $\sum_{\iota=1}^\infty\rho_{\iota+1}^{-1}r_\iota^2<\infty$ then $u\in C^1_c$. 

Let us estimate the $L^q$ distance of $Du_\iota$ from $Du_{\iota+1}$ as follows
\begin{align*}
&\int \lvert Du_{\iota+1}-Du_\iota\rvert^q d\Leb^k=\sum_{Q\in \Delta_{\iota+1}}\int_{\mathrm{supp}(\sigma_Q)}\lvert Du_{\iota+1}-Du_\iota\rvert^q d\Leb^k \\
\lesssim & 2^{Bk(\iota+1)}\sup_{Q\in \Delta_{\iota+1}}\Big( \Leb^k(\mathrm{supp}(\sigma_Q)\setminus Q)(\rho_{\iota+1}^{-1}r_\iota^2+r_\iota)^q+\Leb^k(Q)r_\iota^q\Big)\\
\lesssim & 2^{Bk(\iota+1)} \big(r_{\iota+1}^{k-1}\rho_{\iota+1}(\rho_{\iota+1}^{-1}r_\iota^2+r_\iota)^q +r_{\iota+1}^kr_\iota^q\big)\lesssim_{B,q, \Omega}2^{Bk\iota} r_\iota^{2q+k-1}\rho_{\iota+1}^{1-q},
    \end{align*}
where the second last inequality comes from Jensen's inequality and the last one from fact that $2^{B k\iota}r_\iota^k\lesssim_{\Omega} 1$. By $\lesssim$ we mean that the inequalities hold true up to a constant depending on ${M_1,M_2,k,\Omega}$. This implies that, in the regime 
\begin{equation}\label{hsumm}
\sum_{\iota\in \N} 2^{Bk\iota}r_\iota^{2q+k-1}\rho_{\iota+1}^{1-q}<\infty,
\end{equation}
we have that $u\in W^{1,q}_0(\Omega)$.

Let us conclude the proof checking that \eqref{lusin} holds. Let us check this validity separately in the two different regimes. If 
$$\sum_{\iota\in \N}\rho_{\iota+1}^{-1}r_\iota^2<\infty,$$
then $u\in C^1$ and given any $x\in \mathfrak{C}$, there exists a sequence of cubes $Q_j\in \Delta_j$ such that $x\in Q_j$ for which we have 
$$Du(x)=\lim_{j\to \infty}Du_j(x)=\lim_{j\to \infty} a_{Q_j}^j=\lim_{j\to \infty} F(\mathfrak{c}(Q_j),u_j(\mathfrak{c}(Q_j)))=F(x, u(x)),$$
where the last identity comes from the continuity of $F$ and that of $u$.

In the second case, the one in which \eqref{hsumm} holds, we know that $Du_j$ converges in $L^q(\Omega)$ to $Du$. For every $j\in \N$ we have 
\begin{align*}
       \lim_{r\to 0} \fint_{B(x,r)}\lvert Du(y)-F(y,u(y))\rvert d\Leb^k\leq  & \lim_{r\to 0}\hphantom{+}\fint_{B(x,r)}\lvert Du_j(y)-Du(y)\rvert d\Leb^k \\ 
       & \hphantom{\lim_{r\to 0}} +\fint_{B(x,r)} \lvert Du_j(y)-F(y,u_j(y))\rvert d\Leb^k \\
       & \hphantom{\lim_{r\to 0}} +\fint_{B(x,r)} \lvert F(y,u(y))-F(y,u_j(y))\rvert d\Leb^k.
\end{align*}
Jensen's inequality, the continuity of $F$, the convergence in $L^\infty$ of $u_j$ to $u$ and the convergence in $L^q$ of $Du_j$ to $Du$ imply that for every $\varepsilon>0$ there exists a $j\in\N$ such that 
$$\lim_{r\to 0} \fint_{B(x,r)}\lvert Du-F(\cdot,u)\rvert d\Leb^k\leq 2\varepsilon+\lim_{r \to 0} \fint_{B(x,r)} \lvert Du_j-F(\cdot,u_j)\rvert d\Leb^k.$$
Since $x\in \mathfrak{C}$, there exists a cube $Q_j\in \Delta_j$ such that $x\in \mathrm{int}(Q)$, this implies that if $Du_j=a_{Q_j}^j$ on $B(x,r)$ provided $r$ is small enough. Hence
\begin{equation}
    \begin{split}
        \fint_{B(x,r)} \lvert Du_j(y)-&F(y,u_j(y))\rvert\, d\Leb^k(y)=\fint_{B(x,r)} \lvert a_{Q_j}^j-F(y,u_j(y))\rvert\, d\Leb^k(y)\\
        =&\fint_{B(x,r)} \lvert F(\mathfrak{c}(Q_j),u_j(\mathfrak{c}(Q_j)))-F(y,u_j(y))\rvert\, d\Leb^k(y)\\    
        \leq &\,2M_2r_j+M_2\fint_{B(x,r)}\lvert u_j(\mathfrak{c}(Q_j))-u_j(y)\rvert d\Leb^k(y).
    \end{split}
\end{equation}
However, inside the cube $Q_j$ the function $u_j$ is linear and it coincides with 
$$u_j(y)=u_j(\mathfrak{c}(Q))+a_{Q_j}^j[y-\mathfrak{c}(Q_j)],$$
since by definition $u_{j-1}(\mathfrak{c}(Q))=u_j(\mathfrak{c}(Q))$. This shows in particular that 
\begin{equation}
    \fint_{B(x,r)} \lvert Du_j(y)-F(y,u_j(y))\rvert\, d\Leb^k(y)\leq 2M_2r_j+M_2^2r_j. 
\end{equation}
This allows us to infer that 
\begin{equation}
    \begin{split}
        \lim_{r\to 0} \fint_{B(x,r)}\lvert Du-F(\cdot,u)\rvert d\Leb^k\leq 2\varepsilon+2M_2(M_2+1)r_j,
    \end{split}
\end{equation}
however the arbitrariness of $\varepsilon>0$ and of $j\in \N$ allows us to conclude that 
$$\lim_{r\to 0}\fint_{B(x,r)}\lvert Du(y)-F(y,u(y))\rvert d\Leb^k=0,$$
concluding the proof. 
\end{proof}

\subsection{Higher regularity}

This subsection is divided up into paragraphs, each of which will be devoted to the proof of Theorems \ref{lusinregSobolev} and \ref{theoremlusin2} respectively. At the beginning of each section we will specify the choices of the sequence $\boldsymbol{\rho}$ and of the dimensional constant $B$.

\subsubsection{Proof of Theorem \ref{lusinregSobolev}}\label{Caso1}
In what follows we let $q\in [1,\infty]$, $s<q/(2q-1)$ and 
$$\alpha< 1-\Big( 2 -\frac{1}{q}\Big)s.$$
We choose $B\geq 10$ and we let $\boldsymbol{\rho}=\{\rho_j\}_{j\in\N}$ be the sequence
$$\rho_j:=\Big(\frac{3\delta}{\pi^2}\Big)^\frac{1}{1-s}j^{-\frac{2}{1-s}}2^{-\frac{B}{1-s}j}.$$
Notice that the sequence $\rho_j$ is decreasing because of the choice of $B$ and that 
$$\sum_{j\in \N}2^{Bj}\rho_{j}^{1-s}= \frac{\delta}{2}.$$
Finally notice that because of the choice of $\boldsymbol{\rho}$ and the fact that $s<1/2$ we have
\begin{equation}\label{decayraggi}
   \sum_{j\in\N} \rho_{j+1}^{-1}r_j^2\lesssim_{\delta} \sum_{j\in\N} j^{\frac{2}{1-s}}2^{-\frac{1-2s}{1-s}Bj}<\infty.
\end{equation}
Thanks to Theorem \ref{prop5.3}, this implies that $u\in C^1_c(\Omega)$.

\smallskip

\textbf{Case $q<\infty$.}
It is not hard to check that if $s<q/(2q-1)$, then the series 
$$\sum_{\iota\in \N} 2^{Bk\iota}r_\iota^{2q+k-1}\rho_{\iota+1}^{1-q}\qquad\text{converges},$$
and hence $u\in W^{1,q}_0(\Omega)$. 
    Let us begin observing that for every $j\in\N$ we have 
     $$v_j:=u_{j+1}(x)-u_j(x)=\sum_{Q\in\Delta_{j+1}}\sigma^{j+1}_{Q}(x)(a^{j+1}_{Q}-a^j_{\mathfrak{f}(Q)})[x-\mathfrak{c}(Q)],$$
where $a_j$, $\mathfrak{c}(Q)$ and $\mathfrak{f}(\mathfrak{c}(Q))$ were introduced in the proof of Proposition \ref{prop5.3} and in Definition \ref{definizionecubani}.
Thus, we can write 
$$Dv_{j+1}=\sum_{Q\in \Delta_{j+1}}D\sigma_Q^{j+1}(x)\otimes (a_Q^{j+1}-a_{\mathfrak{f}(Q)}^j)[x-\mathfrak{c}(Q)]+\sum_{Q\in \Delta_{j+1}}\sigma_Q^{j+1}(x)(a_Q^{j+1}-a_Q^j).$$
Throughout the rest of the section we define 
$$\Phi_{j}:=D\sigma_Q^{j+1}\otimes (a_Q^{j+1}-a_{\mathfrak{f}(Q)}^j)[\cdot-\mathfrak{c}(Q)]\qquad\text{and}\qquad\Psi_{j}:=\sigma_Q^{j+1}(a_Q^{j+1}-a_Q^j).$$
With these notations, one immediately sees  that 
$$[Du_j]_{W^{\alpha,q}(\Omega)}\leq [Du_1]_{W^{\alpha,q}(\Omega)}+\sum_{j\in\N}[\Phi_j]_{W^{\alpha,q}(\Omega)}+\sum_{j\in\N}[\Psi_j]_{W^{\alpha,q}(\Omega)}.$$
Let us estimate $[\Phi_j]_{W^{\alpha,q}(\Omega)}$ and $[\Psi_j]_{W^{\alpha,q}(\Omega)}$ separately. 
First, we proceed with $\Psi_j$. One immediately sees  by \eqref{stimadist} that for $j$ sufficiently big we have 
\begin{equation}
    \begin{split}
\lVert \Psi_j\rVert_\infty\leq &\max_{Q\in \Delta_j}\lVert a_Q^{j+1}-a_{\mathfrak{f}(Q)}^j\rVert\leq 2M_2\sqrt{k}(M_1+1)r_j.
    \end{split}
\end{equation}
Furthermore, this implies in particular that 
$$\lVert D\Psi_j\rVert_\infty\lesssim_{M_1,M_2,k} \rho_{j+1}^{-1}r_j.$$
Hence, we have 
\begin{equation}
    \begin{split}  &\qquad\qquad\qquad\qquad[\Psi_j]_{W^{\alpha,q}(\Omega)}=\int_\Omega\int_\Omega \frac{\lvert \Psi_j(x)-\Psi_j(y)\rvert^q}{\lvert x-y\rvert^{k+\alpha q}}dx\,dy\\
        &=\int_\Omega\int_{B(y,\rho_j)} \frac{\lvert \Psi_j(x)-\Psi_j(y)\rvert^q}{\lvert x-y\rvert^{k+\alpha q}}dx\,dy+\int_\Omega\int_{B(y,\rho_j)^c} \frac{\lvert \Psi_j(x)-\Psi_j(y)\rvert^q}{\lvert x-y\rvert^{k+\alpha q}}dx\,dy\\
       & \qquad\lesssim_{M_1,M_2,k,\Omega} \rho_{j+1}^{-q}r_j^q\int_0^{\rho_{j+1}} s^{q(1-\alpha)-1}ds+r_j^q\int_{\rho_{j+1}}^\infty s^{-1-\alpha q}ds\lesssim_{\alpha,q} \rho_{j+1}^{-\alpha q}r_j^q.
    \end{split}
\end{equation}
This shows because of the choice of the sequence $\rho_j$ and \eqref{decayraggi} that if $\alpha<1-s$ then 
$$\sum_{j\in \N} [\Psi_j]_{W^{\alpha,q}(\Omega)}<\infty.$$
Let us focus on the more delicate estimate of $[\Phi]_{W^{\alpha,q}(\Omega)}$. Let us notice that 
\begin{equation}
    \lVert \Phi_j\rVert_\infty\lesssim_k\rho_{j+1}^{-1}\max_{Q\in \Delta_j}\lVert a_Q^{j+1}-a_{\mathfrak{f}(Q)}^j\rVert\mathrm{diam} Q\lesssim_{M_1,M_2,k} \rho_{j+1}^{-1} r_j^2.
\end{equation}
On the other hand
\begin{equation}
    \begin{split}
\lVert D\Phi_j\rVert_\infty\leq&\Big(\lVert D\sigma_Q^{j+1}\rVert_\infty+\lVert D^2\sigma_Q^{j+1}\rVert_\infty\mathrm{diam}(Q)\Big)\max_{Q\in \Delta_j}\lVert a_Q^{j+1}-a_{\mathfrak{f}(Q)}^j\rVert\\
&\qquad\qquad\lesssim_{M_1,M_2,k} \rho_j^{-1} r_j+\rho_j^{-2}r_j^2\lesssim\rho_j^{-2}r_j^2,
\nonumber
    \end{split}
\end{equation}
where the last inequality comes from the choice of $\rho_j$ and the fact that $\rho_{j}^{-1}r_j\geq 1$. This implies in particular that 
\begin{equation}
    \begin{split}
&\qquad\qquad\qquad\qquad[\Phi_j]_{W^{\alpha,q}(\Omega)}=\int_\Omega\int_\Omega \frac{\lvert \Phi_j(x)-\Phi_j(y)\rvert^q}{\lvert x-y\rvert^{k+\alpha q}}dx\,dy\\
        &=\int_\Omega\int_{B(y,\rho_j)} \frac{\lvert \Phi_j(x)-\Phi_j(y)\rvert^q}{\lvert x-y\rvert^{k+\alpha q}}dx\,dy+\int_\Omega\int_{B(y,\rho_j)^c} \frac{\lvert \Phi_j(x)-\Phi_j(y)\rvert^q}{\lvert x-y\rvert^{k+\alpha q}}dx\,dy\\
        &=\int_{\mathfrak{C}_j\setminus \mathfrak{C}_{j+1}}\int_{B(y,\rho_j)} \frac{\lvert \Phi_j(x)-\Phi_j(y)\rvert^q}{\lvert x-y\rvert^{k+\alpha q}}dx\,dy+\int_{\mathfrak{C}_j\setminus \mathfrak{C}_{j+1}}\int_{B(y,\rho_j)^c} \frac{\lvert \Phi_j(x)-\Phi_j(y)\rvert^q}{\lvert x-y\rvert^{k+\alpha q}}dx\,dy\\
        &\leq \Leb^{k}(\mathfrak{C}_j\setminus \mathfrak{C}_{j+1})\Big( \int_{B(y,\rho_j)}\frac{\lVert D\Phi_j\rVert_\infty^q}{\lvert x-y\rvert^{k+(\alpha-1) q}}\,dx\,dy+\int_{B(y,\rho_j)^c}\frac{2\lVert \Phi_j\rVert_\infty^q}{\lvert x-y\rvert^{k+\alpha q}}\,dx\,dy\Big)\\
         &\lesssim_{M_1,M_2,k,\Omega} \Leb^{k}(\mathfrak{C}_j\setminus \mathfrak{C}_{j+1})\bigg(\rho_{j+1}^{-2q}r_j^{2q}\int_0^{\rho_{j+1}} s^{q(1-\alpha)-1}ds+\rho_{j+1}^{-q}r_j^{2q}\int_{\rho_{j+1}}^\infty s^{-1-\alpha q}ds\bigg)\\
         &\!\!\overset{\eqref{stimamassadiffCC}}{\lesssim} j^{-\frac{2Bj}{1-s}}2^{-\frac{sBj}{1-s}}\rho_{j+1}^{-  q(1+\alpha)}r_j^{2q}\lesssim j^\frac{-2+q(1+\alpha)}{1-s}2^{\frac{-s+q(1+\alpha)-2q(1-s)}{1-s}Bj}.
         \nonumber
    \end{split}
\end{equation}
Therefore, if 
$$\alpha<1-\Big(2-\frac{1}{q}\Big)s,$$
we see that 
$$\sum_{j\in\N}[\Phi_j]_{W^{\alpha,q}(\Omega)}<\infty.$$
This concludes the proof of the fact that $u\in W^{1+\alpha,q}(\Omega)\cap L^\infty(\Omega)$ in the sense that $u\in W^{s,1}(\Omega)\cap L^\infty(\Omega)$ and $Du\in W^{\alpha,q}(\Omega)$.

\smallskip

\textbf{Case $q=\infty$.} 
In this specific case we know that $-1+\alpha+2s<0$ and hence $s<1/2$. As remarked above in this regime the function $u$ is automatically $C^1_c$ thanks to our choice of the sequence $\boldsymbol{\rho}$. 
In this paragraph we study H\"older estimates for $Du$. 
Let $\alpha\in(0,1]$ and note that for every $x,y\in \Omega$ and any $\iota\in\N$ we have
\begin{equation}
    \begin{split}
        \frac{\lvert Du(x)-Du(y)\rvert}{\lvert x-y\rvert^\alpha}\leq& \frac{2\lVert Du-Du_\iota\rVert_\infty}{\lvert x-y\rvert^\alpha}+\frac{\lvert Du_\iota(x)-Du_\iota(y)\rvert}{\lvert x-y\rvert^\alpha}\\
        \leq& \frac{2\lVert Du-Du_\iota\rVert_\infty}{\lvert x-y\rvert^\alpha}+\mathrm{Lip}(Du_\iota)\lvert x-y\rvert^{1-\alpha}.
        \label{stimaholder0}
    \end{split}
\end{equation}
For every $Q\in \Delta_1$ and every 
$x,y\in Q$, since both in \S\ref{Caso1} and in \S\ref{Caso2} the sequence $\rho_{\iota+1}^{-1}r_\iota^2$ is decreasing and $\ell(Q)=r_1<\rho_1^{-1}r_0^2$,
there exists a $\iota\in\N$, depending on $x,y$, such that 
\begin{equation}
\rho_{\iota+1}^{-1}r_\iota^2\leq \lvert x-y\rvert^\alpha\leq \rho_{\iota}^{-1}r_{\iota-1}^2.
\label{eq:num:num11}
\end{equation}
In order to estimate $\mathrm{Lip}(Du_\iota)$, where now $\iota$ is the one for which \eqref{eq:num:num11} holds, we equivalently bound $\lVert D^2 u_\iota\rVert_\infty$. One immediately sees  that if $x\not\in \cup \Delta_\iota$, then $D^2u_\iota(x)=D^2u_{\iota-1}(x)$. On the other hand, if $x\in Q$ for some $Q\in \Delta_\iota$ we have $D^2u_{\iota-1}(x)=0$ and thus
\begin{equation}
\begin{split}
    \lvert D^2u_\iota(x)\rvert&= \lvert D^2\sigma^{\iota}_{Q}(x)\otimes(a^{\iota}_{Q}-a^{\iota-1}_{\mathfrak{f}(Q)})[x-\mathfrak{c}(Q)]+ D\sigma^{\iota}_{Q}(x)(a^{\iota}_{Q}-a^{\iota-1}_{\mathfrak{f}(Q)})\rvert\\
    &\leq (8k^\frac{3}{2} \rho_\iota^{-2}r_\iota+8k \rho_\iota^{-1})\lVert a^{\iota}_{Q}-a^{\iota-1}_{\mathfrak{f}(Q)}\rVert\\
    &\leq  4M_1 \sqrt{k}(8k^\frac{3}{2} \rho_\iota^{-2}r_\iota+8k \rho_\iota^{-1}) r_{\iota}.  
      \label{eq:num:num12}
      \end{split}
\end{equation}
The first identity above comes from the fact that $u_{\iota-1}$ is linear on each $Q\in \Delta_\iota$.
Therefore, \eqref{stimaholder0} implies that for every $x,y\in Q$ we have
\begin{equation}
\begin{split}
    &\hphantom{\le}\frac{\lvert Du(x)-Du(y)\rvert}{\lvert x-y\rvert^\alpha}\leq 2\lvert x-y\rvert^{-\alpha}\sum_{\tau=\iota}^\infty\lVert Du_{\tau+1}-Du_{\tau}\rVert_{L^\infty}+\mathrm{Lip}(Du_\iota)\lvert x-y\rvert^{1-\alpha}\\
&\lesssim_{M_1,M_2,k}(\rho_{\iota+1}^{-1}r_\iota^2)^{-1}\sum_{\tau=\iota}^\infty(\rho_{\tau+1}^{-1}r_\tau^2+r_\tau)+\big(\max_{j\leq \iota}( \rho_j^{-2}r_j+ \rho_j^{-1})r_j\big)(\rho_{\iota}^{-1}r_{\iota-1}^2)^\frac{1-\alpha}{\alpha},
\label{stimagradientsia}
\end{split}
\end{equation}
where the last inequality follows from \eqref{DeltaDuno}, \eqref{DeltaDdue} and \eqref{eq:num:num11}. Since $\rho_{\iota+1}^{-1}r_\iota$ is increasing, we infer that 
\begin{equation}
    \begin{split}
        &\frac{\lvert Du(x)-Du(y)\rvert}{\lvert x-y\rvert^\alpha}\lesssim\frac{2^{\frac{1-2s}{1-s}B\iota}}{(\iota+1)^{\frac{2}{1-s}}}\big( 2^{-B\iota}+ \sum_{\tau=\iota}^\infty\rho_{\tau+1}^{-1}r_\tau^2\big)+\iota^\frac{\alpha+1}{\alpha}2^{\frac{-1+\alpha+2s}{\alpha(1-s)}B\iota},
        \label{stima1dup}
    \end{split}
\end{equation}
where the implicit constant depends only on $\delta, k, M_1, M_2$. 
Observe that, because of the choices of $\alpha$ and $s$, the function $\iota \mapsto \iota^\frac{\alpha+1}{\alpha}2^{\frac{-1+\alpha+2s}{\alpha(1-s)}B\iota}$ is bounded. In order to conclude that $Du$ is $\alpha$-H\"older we first need to estimate for every $\iota\in \N$ the series $\sum_{\tau=\iota}^\infty \rho_{\tau+1}^{-1}r_\tau^2$. 
By definition of $\rho_{\iota+1}$ and $r_\iota$ we infer that 
\begin{equation}
    \sum_{\tau=\iota}^\infty \rho_{\tau+1}^{-1}r_\tau^2\lesssim_{\delta,s} \sum_{\tau=\iota}^\infty (\tau+1)^{\frac{2}{1-s}} 2^{-\frac{1-2s}{1-s}B\tau}\lesssim_{s,B}\Gamma\Big(\frac{2}{1-s}+1,\log 2\,\iota\frac{1-2s}{1-s}B\Big),
\end{equation}
    where $\Gamma(x,s)$ here denotes the incomplete $\Gamma$ function. Notice that thanks to the properties of the incomplete $\Gamma$ function we have 
    $$\lim_{\iota \to \infty} \frac{\Gamma(s,x)}{x^{s-1}e^{-x}}=1,$$
hence, for $\iota$ big enough, thanks to few algebraic computations, we have that
\begin{equation}
    \sum_{\iota\geq \iota_0}\rho_{\tau+1}^{-1}r_\tau^2\lesssim_{s,\delta,B}  \iota^\frac{2}{1-s} 2^{-\frac{1-2s}{1-s}B\iota}. 
    \label{stimafiga}
\end{equation}
Finally, we can estimate the $[Du]_\alpha$ seminorm  thanks to \eqref{stimafiga} and \eqref{stima1dup} as follows
\begin{equation}
    \begin{split}
[Du]_\alpha\lesssim_{M_1,M_2,k,\delta,s,\delta,B}(\iota+1)^{-\frac{2}{1-s}}2^{-\frac{s}{1-s}B\iota}+2 +\iota^\frac{\alpha+1}{\alpha}2^{\frac{-1+\alpha+2s}{\alpha(1-s)}B\iota}.
    \end{split}
\end{equation}
The function on the right-hand side is bounded in $\iota$ and hence $[Du]_\alpha<\infty$ and hence $u$ is of class $C^{1,\alpha}$. Finally, notice that if $s=0$ then $[Du]_\alpha$ is finite for every $\alpha>0$ and hence $u\in \bigcap_{0<\alpha<1} C^{1,\alpha}(\Omega)$.
This exhausts the last case and concludes the proof.

\subsubsection{Proof of Theorem \ref{theoremlusin2}}
\label{Caso2}
Let $d<k$.
Further we let $\lambda\in(0,1)$ be such that 
$\lambda:=2^{-\frac{B(k-d)}{d}}$,
and let us choose $\boldsymbol{\rho}=\{\rho_\iota\}_{\iota\in\N}$ to be the sequence
\begin{equation}
    \rho_\iota:=\delta (1-\lambda)\lambda^\iota2^{-B\iota}.
    \label{eq:rhoiCzero}
\end{equation}
Notice that the sequence $\rho_\iota$ is decreasing and that 
$\sum_{\iota\in \N}2^{B\iota}\rho_{\iota}=\delta$. The definition of $\rho_\iota$ implies in particular that 
\begin{equation}
    r_\iota= \delta \lambda^{\iota+1}2^{-B\iota}.
    \label{eq:r.iotaCzero}
\end{equation}
Arguing verbatim as in the proof of Proposition \ref{Caso1}, see \eqref{stimagradientsia}, with the choice $\alpha=1$, $q=\infty$ and $s=0$, we see that in this case we have 
\begin{equation}
    \begin{split}
        \sup_{x,y\in \R^k} \frac{\lvert Du(x)-Du(y)\rvert}{\lvert x-y\rvert}\lesssim_{M_1,M_2,k}\sup_{\iota \in \N}\frac{\sum_{\tau=\iota}^\infty(\rho_{\tau+1}^{-1}r_\tau^2+r_\tau)}{\rho_{\iota+1}^{-1}r_\iota^2}+\max_{j\leq \iota} \rho_j^{-2}r_j^2+ \rho_j^{-1}r_j\lesssim_{\lambda,\delta}1,
        \nonumber
    \end{split}
\end{equation}
where the last bound is an immediate consequence of the choice of the sequence $\rho_\iota$ and few omitted algebraic computations.

As in the previous step we let $\mathfrak{D}:=\mathfrak{C}(\delta, \boldsymbol{\rho},B,\Omega)$ the compact set constructed in \S \ref{defCompatto}. Let us notice that thanks to Proposition \ref{dimensione}, we have that $\mathrm{dim}_\Haus(\mathfrak{C})=d$.

\section{Frobenius theorems}

\subsection{From tangency sets to a PDE constraint}\label{totangencytoDPE}

Let $V$ be a $k$-dimensional distribution  on $\R^n$ spanned by the system of orthonormal $C^1$ vector fields $\{X_1,\ldots,X_k\}$ and $S$ a $k$-dimensional submanifold of $\R^n$. Without loss of generality we can assume that $0\in\tau(S,V)$ and
$$\Tan(S,0)=\Span(e_1,\ldots, e_k)=:W.$$
Thanks to the regularity of $V$ and the fact that $V(0)=W$, there exists an $\newr\label{r:1}>0$ for which $V(x)$ is the graph of a linear function $M(x):W\to W^\perp$ whenever $x\in U(0,\oldr{r:1})$.

Moreover since $S$ is a $k$-dimensional embedded surface of class $C^1$ there are an $0<\newr\label{r:2}<\oldr{r:1}$, an open neighbourhood $U$ of $0$ in $W$ and a function $f:U\to W^\perp$ of class $C^1$ such that:
$$\gr(f)=S\cap U(0,\oldr{r:2}).$$
Since $\Tan(S,f(y))=\text{im}[Df(y)]$ for any $y\in U$, we can express the tangency set $\tau(S,V)$ in terms of $f$ and $M$:
\begin{equation}
\tau(S,V)\cap U(0,\oldr{r:2})=f\big(\{y\in U: Df(y)=M(f(y))\}\big).
\label{eq:2}
\end{equation}

The following proposition links the non-involutivity of $V$ to the curl of the matrix field $M$. This is a rephrasing of the standard connection between Frobenius Theorem and Poincar\'e Lemma, but we include a proof here for the sake of consistency with our notation and with our setting of the problem.

\begin{proposition}\label{Non-inv}
Suppose $V$ is non-involutive at $0$. Then there are a radius $0<\newr\label{r:3}<\oldr{r:2}$ and indices $a,b\in\{1,\ldots,k\}$ and $p\in\{1,\ldots,n-k\}$, such that:
$$\partial_a M_{p,b} -\partial_b M_{p,a}\neq0 \text{ on }U(0,\oldr{r:3}).$$
\end{proposition}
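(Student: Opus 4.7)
The plan is to work with a convenient $C^1$ frame spanning $V$ near $0$ and to compute its Lie brackets directly. Since $V(x)$ is the graph of $M(x)\colon W \to W^\perp$ for $x \in U(0, \oldr{r:1})$, the natural candidate is
\[
Y_a(x) := e_a + \sum_{p=1}^{n-k} M_{p,a}(x)\, e_{k+p}, \qquad a = 1, \ldots, k,
\]
which forms a $C^1$ frame spanning $V$ on $U(0, \oldr{r:1})$. The key observation is that $\partial_j Y_a(x) = \sum_p \partial_j M_{p,a}(x)\, e_{k+p}$ lies in $W^\perp$ for every coordinate index $j$, because the $W$-component of $Y_a$ is the constant vector $e_a$. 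Consequently both $D_x Y_a\cdot Y_b$ and $D_x Y_b\cdot Y_a$ take values in $W^\perp$, and so does $[Y_a, Y_b](x)$ for every $x\in U(0,\oldr{r:1})$. On the other hand $V(x)\cap W^\perp=\{0\}$ since $V(x)$ is the graph of a linear map from $W$. Together these give the equivalence
\[
[Y_a, Y_b](x) \in V(x) \iff [Y_a, Y_b](x) = 0.
\]

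The second step is to reduce non-involutivity of $V$ at $0$ to a statement about the chosen frame. Any two $C^1$ vector fields tangent to $V$ can be written as $X = \sum_a f_a Y_a$ and $X' = \sum_b g_b Y_b$, and the Leibniz identity $[fX, gY] = fg[X, Y] + f(Xg)\,Y - g(Yf)\,X$ yields
\[
[X, X'](0) \equiv \sum_{a,b} f_a(0)\, g_b(0)\, [Y_a, Y_b](0) \pmod{V(0)},
\]
because the correction terms $f_a(X' g_b)\,Y_b$ and $g_b(Y_b f_a)\,Y_a$ are manifestly tangent to $V$. Hence non-involutivity at $0$ forces $[Y_a, Y_b](0) \notin V(0)$ for some pair $(a,b)$, which by the previous paragraph means $[Y_a, Y_b](0) \neq 0$ for those indices.

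Finally I use that $V(0) = W$ imposes $M(0) = 0$, which makes the expansion of $[Y_a, Y_b]$ simplify drastically at the origin: all quadratic contributions of the form $M_{p,b}\,\partial_{k+p}M_{q,a} - M_{p,a}\,\partial_{k+p}M_{q,b}$ vanish, leaving
\[
[Y_a, Y_b](0) = \sum_{p=1}^{n-k} \bigl(\partial_b M_{p,a}(0) - \partial_a M_{p,b}(0)\bigr)\, e_{k+p}.
\]
Non-vanishing of this vector produces indices $a, b \in \{1, \ldots, k\}$ and $p \in \{1, \ldots, n-k\}$ with $\partial_a M_{p,b}(0) - \partial_b M_{p,a}(0) \neq 0$, and the continuity of $DM$ (here $M$ is $C^1$ because $V$ is) provides the ball $U(0, \oldr{r:3})$ claimed in the statement.

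The one point requiring a little care is the Leibniz-type reduction to the specific frame, but this is completely routine; the geometric heart of the argument is the orthogonality observation $[Y_a, Y_b](x) \in W^\perp$, which decouples the non-involutivity obstruction from the first-order perturbation terms involving $M(x)$ itself and reduces it at $x = 0$ to the pure anti-symmetric combination of partial derivatives of $M$ that appears in the statement.
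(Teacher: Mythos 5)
Your proof is correct and follows essentially the same route as the paper: you choose the coordinate frame $Y_a = e_a + M(\cdot)e_a$ spanning $V$, reduce non-involutivity at $0$ to $[Y_a, Y_b](0) \neq 0$ for some pair via the Leibniz rule, and evaluate the bracket at the origin using $M(0) = 0$ to isolate the antisymmetric combination $\partial_a M_{p,b} - \partial_b M_{p,a}$. The observation that $[Y_a, Y_b](x) \in W^\perp$ while $V(x) \cap W^\perp = \{0\}$ is a clean geometric simplification compared to the paper's reliance on the full bracket expansion; one small slip: in your Leibniz display the term $f_a(X' g_b)\,Y_b$ should read $f_a(Y_a g_b)\,Y_b$, though this does not affect the argument, since both are manifestly tangent to $V$.
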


\begin{proof}
For any $i\in\{1,\ldots,k\}$ we define the vector fields $X_i:U(0,\oldr{r:2})\to\R^n$ by
\begin{equation}
X_i(z):=e_i+M(z)[e_i].
\label{eq:1027}
\end{equation}
The vector fields $X_i$ are of class $C^1$ and for any $z\in U(0,\oldr{r:2})$ the vectors $\{X_1(z),\ldots,X_k(z)\}$ span $V(z)$. Since $V$ is not involutive at $0$, we can find $a,b\in\{1,\ldots,k\}$ such that $[X_{a},X_{b}](0)\neq 0$.
Indeed, if this was not the case, we would have:
$$\bigg[\sum_{p=1}^k\alpha_p X_p,\sum_{q=1}^k\beta_q X_q\bigg](0)=\sum_{q=1}^k\bigg(\sum_{p=1}^k \alpha_p(0)\bd_p\beta_q(0)-\beta_p(0)\partial_p\alpha_q(0)\bigg)e_q\in W,$$
for any $\alpha_p,\beta_q\in C^1( U(0,\oldr{r:2}))$. This would be in contradiction with the fact that $V$ is not involutive at $0$. Thanks to the definition of the vector fields $X_i$ in \eqref{eq:1027} together with few computations that we omit, for any $i,j\in\{1,\ldots,k\}$ we have:
\begin{align}\label{eq:1}
    [X_i,X_j]= & \hphantom{+}\ \sum_{p=1}^k (\partial_i M_{p,j} -\partial_j M_{p,i})e_p
    \\ & +\!\!\!\sum_{p=k+1}^n\sum_{q=k+1}^n  (M_{q-k,i}\partial_p M_{p-k,j}-M_{q-k,j}\partial_p M_{p-k,i})e_p.
\end{align}
Therefore, identity \eqref{eq:1} together with the fact that $M(0)=0$ implies that:
    \begin{equation}
       0\neq[X_a,X_b](0)=\sum_{p=1}^k (\partial_a M_{p,b}(0) -\partial_b M_{p,a}(0))e_p.
        \label{eq:1028}
    \end{equation}
From \eqref{eq:1028} we deduce in particular that there is a  $p\in\{1,\ldots,n-k\}$ such that:
$$\partial_a M_{p,b}(0) -\partial_b M_{p,a}(0)\neq 0.$$
The existence of $\oldr{r:3}$ follows by the regularity of $M$.
\end{proof}

The previous proposition has the following consequence:

\begin{proposition}
Suppose that $f$ is  of class $C^{1,1}$ and $V$ is non-involutive at any point of $U(0,\oldr{r:3})$. Then:
$$\Haus^k(\tau(S,V)\cap U(0,\oldr{r:3}))=0.$$
\end{proposition}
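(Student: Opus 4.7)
The plan is to translate the tangency condition into a pointwise PDE constraint on $f$ and derive a contradiction with the a.e.\ symmetry of mixed second derivatives. By identity \eqref{eq:2}, the set
\[
E \;:=\; \{y \in U : Df(y) = M((y, f(y)))\} \,\cap\, \{y \in U : (y, f(y)) \in U(0, \oldr{r:3})\}
\]
parameterizes $\tau(S, V) \cap U(0, \oldr{r:3})$ through the Lipschitz graph map $y \mapsto (y, f(y))$, so by the area formula it suffices to show $\Leb^k(E) = 0$.

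Suppose, for contradiction, that $\Leb^k(E) > 0$, and set $\widetilde{M}(y) := M((y, f(y)))$. Since $M \in C^1$ and $f \in C^{1,1}$, both $\widetilde{M}$ and $Df$ are locally Lipschitz. The standard locality principle for Lipschitz maps (two Lipschitz functions that coincide on $E$ must share the same differential at $\Leb^k$-a.e.\ density point of $E$) yields, for $\Leb^k$-a.e.\ $y_0 \in E$ and all admissible indices,
\[
\partial_a \partial_b f_p(y_0) \;=\; (\partial_a M_{p,b})(y_0, f(y_0)) + \sum_{q=1}^{n-k} (\partial_{k+q} M_{p,b})(y_0, f(y_0))\,\partial_a f_q(y_0).
\]
Using $\partial_a f_q(y_0) = M_{q,a}((y_0, f(y_0)))$ (which holds on $E$), and invoking the a.e.\ symmetry $\partial_a \partial_b f_p = \partial_b \partial_a f_p$, which is legitimate because $f \in W^{2,\infty}_{\mathrm{loc}}$ (as one verifies by testing the distributional identity against smooth compactly supported functions), antisymmetrization in $a,b$ gives, for $\Leb^k$-a.e.\ $y_0 \in E$ and every $a,b \in \{1,\ldots,k\}$, $p \in \{1,\ldots,n-k\}$,
\[
\bigl(\partial_a M_{p,b} - \partial_b M_{p,a}\bigr)(y_0, f(y_0)) + \sum_{q=1}^{n-k} \bigl[(\partial_{k+q} M_{p,b})\, M_{q,a} - (\partial_{k+q} M_{p,a})\, M_{q,b}\bigr](y_0, f(y_0)) = 0.
\]

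By the computation inside the proof of Proposition \ref{Non-inv}, the left-hand side is precisely the $(k+p)$-th component of the Lie bracket $[X_a, X_b]$ at $(y_0, f(y_0))$. Moreover, since each $X_i = e_i + M(\cdot)[e_i]$ has constant horizontal part $e_i$, the first $k$ components of $[X_a, X_b]$ vanish identically; therefore $[X_a, X_b](y_0, f(y_0)) = 0 \in V(y_0, f(y_0))$ for every $a, b$. This means $V$ is involutive at $(y_0, f(y_0)) \in U(0, \oldr{r:3})$, contradicting the hypothesis. Hence $\Leb^k(E) = 0$, and the conclusion follows.

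The conceptual heart of the argument, and its only genuine obstacle, is that $C^{1,1}$ is exactly the regularity class at which one simultaneously has the a.e.\ symmetry of mixed second derivatives (because $D^2 f \in L^\infty_{\mathrm{loc}}$) and the locality of gradients of Lipschitz maps on arbitrary measurable sets; together these upgrade the pointwise identity $Df = \widetilde{M}$ on $E$ into the full Frobenius integrability relation at a.e.\ point of $E$, which is incompatible with non-involutivity.
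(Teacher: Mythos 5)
Your proof is correct, and it takes a route that is close in spirit to the paper's but differs in one technical choice. The paper invokes Whitney's $C^2$-extension theorem: it produces a $C^2$ function $g$ agreeing with $f$ on a set $K$ of almost full measure, transfers (via the same locality principle you use) the identities $Df = Dg$ and $D^2 f = D^2 g$ a.e.\ on $K$, and then applies the classical Schwarz theorem to $g$. You instead apply the a.e.\ symmetry of mixed second derivatives directly to $f$, justified by $Df$ being Lipschitz (so that the pointwise a.e.\ Hessian coincides with the symmetric distributional one). Both arguments lean on the same locality principle for Lipschitz gradients; yours replaces the Whitney extension step by the equally standard fact that $W^{2,\infty}_{\mathrm{loc}}$ functions have a.e.\ symmetric Hessians, which makes the argument a little shorter and self-contained.

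A point in your favour worth noting: you correctly keep the chain-rule terms when differentiating $\widetilde M(y) = M((y,f(y)))$ and recognize that the antisymmetrized identity is precisely the vertical component of $[X_a,X_b]$ at $(y_0,f(y_0))$, which you then combine with the vanishing of the horizontal component of the bracket to contradict non-involutivity. The paper's displayed equation \eqref{eq:3} as written omits those chain-rule contributions and equates $\partial_i M_{p,j} - \partial_j M_{p,i}$ directly with $\partial^2_{ij}f_p - \partial^2_{ji}f_p$; that is an imprecision (it conflates $M$ with $\widetilde M$, or implicitly invokes $M(0)=0$ beyond the single point $0$), whereas the correct quantity that is forced to vanish on the tangency set is exactly the Lie-bracket component, which is what you compute. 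So your write-up is actually slightly more careful than the paper's on this step, while reaching the same contradiction.
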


\begin{proof}
Thanks to identity \eqref{eq:2}, we just need to prove that the set
$\mathcal{T}:=\{y\in U: Df(y)=M(f(y))\}$ is Lebesgue-null.
Thanks to Whitney's extension theorem, see  in \cite[Theorem 3.1.15]{Federer1996GeometricTheory}, for any $\varepsilon>0$ there exists a function $g:U\to W^\perp$ of class $C^2$ such that, defined $K:=\{y\in U: f(x)=g(x)\}$, we have: $\Leb^{k}(U\setminus K)<\varepsilon$.
Moreover, since $f$ is of class $C^{1,1}$, we also deduce that, at $\Leb^k$-almost every $x\in K$:
$$D f(x)=Dg(x)\qquad \text{and} \qquad D^2 f(x)=D^2 g(x).$$
Proposition \ref{Non-inv} implies that for some $i,j\in\{1,\ldots,k\}$ and $p\in\{1,\ldots,n-k\}$ we have:
\begin{equation}
    \begin{split}
      0\neq\partial_i M_{p,j}(x) -\partial_j M_{p,i}(x)=\bd^2_{i,j}f_p(x)-\bd^2_{j,i}f_p(x)=\bd^2_{i,j}g_p(x)-\bd^2_{j,i}g_p(x),
    \end{split}
    \label{eq:3}
\end{equation}
for $\Leb^k$-almost every $x\in K$. Since $g$ is of class $C^2$, Schwarz Theorem together with \eqref{eq:3} implies that $\Leb^k(K\cap\mathcal{T})=0$. Therefore by arbitrariness of $\varepsilon$ the conclusion follows. 
\end{proof}

An immediate consequence of the above proposition, is the following:

\begin{corollary}\label{cor:Bal}
If $S$ is a $k$-dimensional surface of class $C^{1,1}$ and $V$ is non-involutive at any point of $\R^n$, then 
$\Haus^k(\tau(S,V))=0$.
\end{corollary}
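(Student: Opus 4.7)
The plan is to derive the corollary from the preceding proposition by a straightforward countable-covering argument. The preceding proposition is formulated under the normalization $0\in\tau(S,V)$ and $\Tan(S,0)=W:=\Span(e_1,\dots,e_k)$, a setting that can be arranged at any prescribed point of $\tau(S,V)$ by a suitable rigid motion of $\R^n$. Consequently, the corollary should follow once we verify that the hypotheses of the proposition transport correctly under such rigid motions and that $\tau(S,V)$ can be covered by countably many neighborhoods of the kind produced by the proposition.

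First, assuming $\tau(S,V)\neq\emptyset$ (otherwise there is nothing to prove), I would fix an arbitrary $x_0\in\tau(S,V)$ and apply the rigid motion $\phi(x):=R(x-x_0)$, where $R$ is an orthogonal transformation sending $\Tan(S,x_0)$ onto $W$. The image $S':=\phi(S)$ is a $k$-dimensional $C^{1,1}$ submanifold with $0\in S'$ and $\Tan(S',0)=W$, so near $0$ it is the graph of a $C^{1,1}$ function $f\colon U\subset W\to W^\perp$. Defining $V'(y):=R\bigl(V(\phi^{-1}(y))\bigr)$ produces a $C^1$ distribution of $k$-planes on $\R^n$, and a direct computation from \S\ref{def:lie} shows that Lie brackets of $C^1$ vector fields are equivariant under the orthogonal change of variables $\phi$. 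Involutivity as defined in \S\ref{def:inv} is therefore preserved pointwise under $\phi$, so $V'$ is non-involutive at every point of $\R^n$. The hypotheses of the preceding proposition are now satisfied by the pair $(S',V')$ at the origin, yielding a radius $r_{x_0}>0$ such that
\[
\Haus^k\bigl(\tau(S',V')\cap U(0,r_{x_0})\bigr)=0.
\]
Since $\phi$ is an isometry, Hausdorff measure is invariant under $\phi$ and the tangency condition $\Tan(S,x)=V(x)$ corresponds exactly to $\Tan(S',\phi(x))=V'(\phi(x))$; pulling back via $\phi^{-1}$ gives
\[
\Haus^k\bigl(\tau(S,V)\cap U(x_0,r_{x_0})\bigr)=0.
\]

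Finally, the family $\{U(x,r_x)\}_{x\in\tau(S,V)}$ is an open cover of $\tau(S,V)\subseteq\R^n$; by second countability of $\R^n$ (the Lindel\"of property) it admits a countable subcover $\{U(x_j,r_{x_j})\}_{j\in\N}$. Countable subadditivity of $\Haus^k$ then yields
\[
\Haus^k(\tau(S,V))\;\leq\;\sum_{j\in\N}\Haus^k\bigl(\tau(S,V)\cap U(x_j,r_{x_j})\bigr)\;=\;0,
\]
as desired. There is no substantive obstacle in this argument: the only checks required are the invariance of non-involutivity under rigid motions, which is elementary from the definition of the Lie bracket, and the Lindel\"of covering of the separable space $\R^n$, which is standard. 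All the analytic content has already been carried out in the preceding proposition via Whitney's extension theorem and Schwarz's theorem on equality of mixed partials.
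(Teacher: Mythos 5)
Your proof is correct and follows exactly the route the paper intends: the corollary is stated as an immediate consequence of the preceding proposition, and the implicit argument is precisely the rigid-motion normalization already arranged at the start of \S\ref{totangencytoDPE}, combined with a Lindel\"of subcover and countable subadditivity of $\Haus^k$. Your check that Lie brackets (hence non-involutivity) are equivariant under orthogonal affine changes of variables is the only nontrivial point, and you handle it correctly.
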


\subsection{Frobenius theorem and fine structure of tangency set}

As we have seen in the previous subsection, non-involutivity can be characterized by a PDE constraint. This observations allows us to bridge 
the geometric structure of the tangency set with the possibility of obtaining a Frobenius-type theorem. First, we need to introduce some notations.

\begin{definition}\label{Yreg}
Let $\alpha \in (0,1)$ and $q \in [1, \infty]$.  
We say that a closed set $S \subset \mathbb{R}^n$ is a $k$-dimensional submanifold of class $Y^{1+\alpha, q}$ if
\begin{itemize}
    \item[(i)] $S$ is an embedded $k$-dimensional submanifold of class $C^1$, and
    \item[(ii)] there exists an atlas $\mathscr{A}$ of $C^1$-regular maps $\varphi_j:U_j\subseteq V_j\to V_j^\perp$ where $V_j\in\Gr(k,n)$, $U_j\subseteq V_j$ is relatively open in $V_j$, $\varphi_j(U_j)\subseteq S$ and satisfies $$D\varphi_j \in W^{\alpha, q}(U_j,\R^{k\times (n-k)}).$$
\end{itemize}
Notice that trivially, if $q=\infty$, then $S$ is of class $C^{1,\alpha}$.
\end{definition}

For sets with low-regularity boundary we obtain the following result.

\begin{theorem}\label{main:incorpus}
Let $V$ be a $k$-dimensional distribution in $\mathbb{R}^n$ of class $C^1$, and let $q \in [1, \infty]$, $s, \alpha \in (0,1)$ be such that $s\in (0,1/2]$ and
\[
\alpha > 1 - \left(2 - \frac{1}{q} \right) s.
\]
Suppose $S$ is a $k$-dimensional $Y^{1+\alpha, q}$-rectifiable set, see Definitions \ref{def:rectifiable} and \ref{Yreg}, and let 
$$E \subseteq \tau(S, V)\cap N(V),$$ 
be a Borel set, where $\tau(S, V)$ denotes the tangency set defined in \S\ref{def:tan} and $N(V)$ denotes the non-involutivity set of the distribution $V$, see \S\ref{def:inv}. If the characteristic function $\mathbb{1}_E$ belongs to $W^{s, 1}(S)$, where  the space $W^{s,1}(S)$ was introduced in  \S\ref{parag:def:sobolev:surface}, then $\mathcal{H}^k(E) = 0$.
\end{theorem}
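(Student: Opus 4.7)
The plan is: first localize around a generic tangency point, then rephrase the tangency condition as the vanishing of a continuous $1$-form $\eta$ whose exterior derivative is non-zero at that point (by non-involutivity); finally, slice down to an affine $2$-plane and invoke the locality result of Proposition \ref{prop:delicata2}.

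By the $Y^{1+\alpha,q}$ structure of $S$ and countable subadditivity, it suffices to work on a single graph: $S = \mathrm{graph}(f)$ for $f:U\to W^\perp$ of class $C^1$ with $Df\in W^{\alpha,q}(U)$, $W\in \Gr(k,n)$. Arguing by contradiction, assume $\Haus^k(E)>0$ and pick a density point $x_0\in E$; by an isometry of $\R^n$ one may take $x_0=0$, $W=\mathrm{span}(e_1,\ldots,e_k)$, $f(0)=0$, $Df(0)=0$ and $V(0)=W$. As in \S\ref{totangencytoDPE}, the distribution $V$ is then locally the graph of a $C^1$ matrix-valued map $M$ with $M(0)=0$, the projected tangency set $E':=\pi_W(E)$ satisfies
\[
Df(y)=M(y,f(y))\quad\text{for every }y\in E',
\]
and Proposition \ref{equivalenza} gives $\mathbb{1}_{E'}\in W^{s,1}(U)$. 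Since $0\in N(V)$, Proposition \ref{Non-inv} furnishes indices $a,b\in\{1,\ldots,k\}$, $p\in\{1,\ldots,n-k\}$ and a radius $r>0$ with $\partial_a M_{p,b}-\partial_b M_{p,a}\neq 0$ on $U(0,r)$.

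Now define the continuous $1$-form $\eta:=\sum_{j=1}^{k}\bigl[M_{p,j}(\cdot,f(\cdot))-\partial_j f_p\bigr]\,dy_j$ on $U$. The first summand of each coefficient is $C^1$ via the chain rule (hence in $W^{\alpha,q}$), while the second is in $W^{\alpha,q}$ by hypothesis, so $\eta\in W^{\alpha,q}(U)$ and $\eta\equiv 0$ on $E'$. Since $d(df_p)=0$ as a distribution, $d\eta$ has the continuous coefficients $(d\eta)_{a,b}(y)=\partial_a[M_{p,b}(y,f(y))]-\partial_b[M_{p,a}(y,f(y))]$. Using $f(0)=0$ and $Df(0)=0$, the chain-rule cross-terms vanish at the origin, yielding
\[
(d\eta)_{a,b}(0)=\partial_a M_{p,b}(0)-\partial_b M_{p,a}(0)\neq 0,
\]
so $(d\eta)_{a,b}$ is non-zero throughout an open neighborhood $U''\subseteq U(0,r)$ of the origin.

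It remains to reduce to two dimensions by slicing. Let $\Pi:=\mathrm{span}(e_a,e_b)$ and for $z\in\Pi^\perp$ set $\bar\eta_z:=\eta_a|_{z+\Pi}\,dy_a+\eta_b|_{z+\Pi}\,dy_b$. By Proposition \ref{restrictionW1,p} and Fubini, for $\Haus^{k-2}$-a.e.\ $z$ one has $\mathbb{1}_{E'\cap(z+\Pi)}\in W^{s,1}(z+\Pi)$ and $\bar\eta_z\in W^{\alpha,q}(z+\Pi)$, while $d\bar\eta_z=(d\eta)_{a,b}|_{z+\Pi}$ is continuous and hence in $L^1_{\mathrm{loc}}$. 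Since $\Haus^k(E'\cap U'')>0$, a further Fubini argument yields a positive-measure set of such $z$ for which additionally $\Haus^2(E'\cap(z+\Pi)\cap U'')>0$; fix one such $z$. On $z+\Pi\cong\R^2$ the form $\bar\eta_z$ vanishes on $E'\cap(z+\Pi)$ with Sobolev exponents satisfying $\alpha>1-(2-1/q)s$, so Proposition \ref{prop:delicata2} forces $d\bar\eta_z=0$ at $\Leb^2$-a.e.\ point of $E'\cap(z+\Pi)$. But $d\bar\eta_z$ is non-zero everywhere on $U''\cap(z+\Pi)$, contradicting $\Haus^2(E'\cap(z+\Pi)\cap U'')>0$. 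The main technical subtlety is precisely this last step: one must exhibit a single slice on which the three favorable properties (fractional regularity of $\mathbb{1}_{E'}$, Sobolev regularity of $\eta$, and positive $2$-mass of $E'$ near the origin) coexist, and this is arranged by combining Proposition \ref{restrictionW1,p} with Fubini's theorem.
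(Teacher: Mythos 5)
Your proof is correct and follows essentially the same approach as the paper's: reduce to a single chart, translate tangency to the PDE constraint $Df = M(\cdot,f(\cdot))$, use the non-involutivity (Proposition \ref{Non-inv}) to exhibit a nonzero curl component, then slice via Proposition \ref{restrictionW1,p} and Fubini down to a $2$-plane, and finally invoke the locality result Proposition \ref{prop:delicata2} to derive a contradiction. The only presentational difference is that you package the coefficients into a single $k$-dimensional $1$-form $\eta$ and then restrict, whereas the paper directly introduces the two-dimensional objects $\mathfrak{m}$ and $g$ (your $\bar\eta_z$ is precisely their $\mathfrak{m}-g$); your version is arguably cleaner in its treatment of the chain-rule cross-terms at the origin, which the paper leaves implicit. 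One point worth being a bit more explicit about in both accounts: Proposition \ref{restrictionW1,p} only yields the slice-regularity for $\gamma_{2,k}$-a.e.\ $2$-plane, while Proposition \ref{Non-inv} hands you one particular plane $\Pi=\mathrm{span}(e_a,e_b)$; the way to reconcile this is to note that the non-vanishing of the curl of $M$ at $0$ is an open condition in the choice of $2$-plane, so one can rotate the orthonormal system $\mathscr{E}$ slightly to land on a slicing-good plane. The paper gestures at this with ``by the arbitrariness of the choice of the system of coordinates $\mathscr{E}$''; you should make the same remark.
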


\begin{proof}
Thanks to the definition $Y^{1+\alpha,q}$-rectifiability and that of $W^{s,p}(S)$, without loss of generality we can assume that $S$ is an embedded $Y^{1+\alpha,q}$-regular $k$-dimesional submanifold. Take a chart $\varphi_j:U_j\subseteq V_j\to V_j^\perp$ in the atlas $\mathscr{A}$ yielded by our assumption that $S$ is of class $Y^{1+\alpha,q}$ and suppose $E\cap \varphi_j(U_j)\neq \emptyset$. 
Denote with $\Gamma_j$ the graph of $\varphi_j$ over $U_j$ and note that
$$[u]_{W^{s,1}(\varphi_j(U_j))}\leq [u]_{W^{s,1}(S)}<\infty.$$
By Proposition \ref{equivalenza}, we finally infer that, defining $\widetilde{E}:=\pi_{V_j}(E\cap \varphi_j(U_j))$, we have 
$\mathbb{1}_{\widetilde{E}}=\mathbb{1}_E\circ \varphi_j\in W^{s,1}(U_j)$. Note that $\widetilde{E}$ is a Suslin set, since $E\cap \varphi_j(U_j)$ is Borel.

By contradiction we assume that $\Haus^k(E\cap \varphi_j(U_j))>0$ for some $j$. In what follows we will drop the dependence of all the objects above from $j$ and it is understood that all these computations can be made in any chart of the atlas.

Arguing as at the beginning of Subsection \ref{totangencytoDPE}, without loss of generality we can assume that
\begin{itemize}
\item[(i)]   $0\in U$, that $\varphi(0)=0$, that $D\varphi(0)=0$ and that $0$ is an $\Haus^k$-density point for $\tau(S,V)$ in $S$;
    \item[(ii)] in a neighborhood of $0$ the planes of the distribution $V$ coincide with the graphs of the $C^1$ matrix field $M:\R^k\to \R^{k\times n-k}$. 
    \item[(iii)] in a neighbourhood of $0$ in $\R^k$ we have that $(w,\varphi_(w))\in \tau(S,V)$ if and only if $D\varphi(w)=M(w)$.
\end{itemize}
Thanks to the above reduction we see that $\widetilde{E}$ has positive measure $\Haus^k(\widetilde{E})>0$ and that $0$ is a density point for $\widetilde{E}$ in $V$. 

Let us note that by Proposition \ref{Non-inv}, for every system $\mathscr{E}:=\{e_1,\ldots,e_k\}$ of orthonormal coordinates of $\R^k$, we know that there exists $p=1,\ldots,n-k$, and $a,b\in \{1,\ldots,k\}$ such that 
\begin{equation}
    \partial_a M_{p,b} -\partial_b M_{p,a}\neq0,
    \label{curl1}
\end{equation}
in an open neighborhood $U\subseteq \R^k$ of $0$. 

Let us define $\mathfrak{m}:=\mathfrak{m}_{\mathscr{E},a,b}:\R^2\to \R^2$ be the vector field 
$$\mathfrak{m}(z):=(M_{p,a}(z),M_{p,b}(z)).$$

Clearly, \eqref{curl1} implies that $d\mathfrak{m}\neq 0$ in a neighbourhood of $0$.
Let us now introduce some notation. In $\R^k$, we denote by $\Pi_{a,b}$ the plane 
$$\Pi_{a,b}:=\mathrm{span}(\{e_1,\ldots, e_{a-1},e_{a+1},\ldots,e_{b-1},e_{b+1},\ldots,e_k\}).$$
Let $B$ be a small ball contained in $\Omega$ centered at $0$ and let $\eta$ be a smooth and positive cutoff function whose support is contained in $B$ and such that $\eta=1$ on $\frac{9}{10}B$. 
By Theorem \ref{restrictionW1,p} and the arbitrariness of the choice of the system of coordinates $\mathscr{E}$, we know that for $\Haus^{k-2}$-almost every $w\in \Pi_{a,b}$ we have, defined $u:=\eta \mathbb{1}_E$, that $u\lvert_{ w+W_{a,b}}\in W^{s,1}(\R^2)$ and $D\varphi\lvert_{ w+W_{a,b}}\in W^{s,1}(\R^2,\R^{n-k})$,
where $W_{a,b}:=\mathrm{span}(e_a,e_b)$. In particular, by Fubini for $\Haus^{k-2}$-almost every $w\in \Pi_{a,b}$ we have 
$$\Haus^k(\tilde{E}\cap w+W_{a,b})\geq \lVert u\rVert_{L^p(w+W_{a,b})}^p>0.$$

Let $g$ be the form $2$-form of class $W^{\alpha,q}$ that coincides with the differential of the coordinate function $\varphi_p$ on the plane $w+W_{a,b}$ or in other words
$$g(z):=\partial_a \varphi_p \,dx_a+\partial_b \varphi_p \, dx_b,\qquad\text{with }z\in w+W_{a,b}.$$
Thanks to item (iii) above, let us note that we have $\mathfrak{m}-g=0$ on $\tilde{E}\cap w+W_{a,b}$.
It is not hard to check that $\mathrm{d}g=0$ as a distribution and hence $\mathrm{d}g\in L^1(\R^2)$.

Let us conclude the proof of the proposition. 
Let us note that $\mathrm{d}(\mathfrak{m}-g)$ is in $L^1_{\rm loc}(\R^2)$ and it coincides with $\mathrm{d}\mathfrak{m}$. Indeed, for every smooth $2$-current $T$, by definition of distributional differential we have 
$$\langle T,\mathrm{d}(\mathfrak{m}-g)\rangle=\langle \partial T,\mathfrak{m}-g\rangle=\langle \partial T,\mathfrak{m}\rangle-\langle \partial T,g\rangle=\langle \partial T,\mathfrak{m}\rangle.$$
Now, since $\mathfrak{m}$ is of class $C^1$, its distributional differential coincides with the classical one and it can be therefore represented by a continuous function. 

This implies by Proposition \ref{prop:delicata2} that 
$$0=\mathrm{d}(\mathfrak{m}-g)=\mathrm{d}\mathfrak{m}\qquad\text{on }\tilde{E}\cap w+W_{a,b}.$$
This is however in contradiction with the fact that $\mathrm{d}\mathfrak{m}\neq 0$. 
\end{proof}

For tangency sets with fractional boundary of higher regularity  we obtain a Frobenius-type theorem for standard rectifiable sets.

\begin{theorem}\label{main:incorpusv2}
    Let $V$ be a $k$-dimensional distribution in $\R^n$ of class $C^1$ and suppose $S$ is a $k$-dimensional rectifiable set. Let 
    $$E\subseteq \tau(S,V)\cap N(V),$$
    be a Borel set, where $\tau(S, V)$ denotes the tangency set defined in \S\ref{def:tan} and $N(V)$ denotes the non-involutivity set of the distribution $V$, see \S\ref{def:inv}. 

    If the characteristic function $\mathbb{1}_E$ belongs to $W^{s, 1}(S)$ for some $s>1/2$, where  the space $W^{s,1}(S)$ was introduced in  \S\ref{parag:def:sobolev:surface}, then $\mathcal{H}^k(E) = 0$.
\end{theorem}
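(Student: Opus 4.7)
The plan is to mirror the proof of Theorem \ref{main:incorpus}, but replacing the delicate H\"older/Sobolev balance of Proposition \ref{prop:delicata2} with the cleaner Proposition \ref{frobeniustrong}, which only requires continuity of the form. This is exactly what allows us to drop the $Y^{1+\alpha,q}$ hypothesis and retain just $C^1$ rectifiability.

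First, I reduce to a single chart. By countable additivity of $\Haus^k$ and the very definition of $k$-rectifiability (Definition \ref{def:rectifiable}), it suffices to show that $\Haus^k(E\cap\varphi(U))=0$ for every $C^1$ graph $\varphi:U\subseteq V\to V^\perp$ that covers part of $S$. Argue by contradiction: suppose $\Haus^k(E\cap\varphi(U))>0$, and set $\widetilde E:=\pi_V(E\cap\varphi(U))$. Proposition \ref{equivalenza} transfers the fractional regularity to the parameter plane: $\mathbb 1_{\widetilde E}\in W^{s,1}(U)$. After translating and rotating coordinates one may assume $0\in\widetilde E$ is an $\Haus^k$-density point, $\varphi(0)=0$, $D\varphi(0)=0$, and that near $0$ the distribution $V(x)$ is the graph of a $C^1$ matrix field $M(x):W\to W^\perp$, with $D\varphi(y)=M(\varphi(y))=M(y,\varphi(y))$ characterizing tangency (i.e.\ $\widetilde E\subseteq\{y:D\varphi(y)=M(y,\varphi(y))\}$ near $0$).

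Next, I extract the non-involutive direction. Since $E\subseteq N(V)$, Proposition \ref{Non-inv} yields indices $a,b\in\{1,\dots,k\}$ and $p\in\{1,\dots,n-k\}$ and a neighborhood of $0$ on which
\[
\partial_a M_{p,b}-\partial_b M_{p,a}\;\neq\;0.
\]
Define the $C^1$ $1$-form $\mathfrak m:=M_{p,a}\,dx_a+M_{p,b}\,dx_b$; then $d\mathfrak m\neq 0$ near $0$. Let $W_{a,b}:=\mathrm{span}(e_a,e_b)$ and $\Pi_{a,b}:=W_{a,b}^\perp\cap W$. By the slicing formula of Proposition \ref{restrictionW1,p}, for $\Haus^{k-2}$-almost every $w\in\Pi_{a,b}$ the restriction $\mathbb 1_{\widetilde E}\bigl|_{w+W_{a,b}}$ belongs to $W^{s,1}(w+W_{a,b})$; by Fubini's theorem combined with the density of $0$ in $\widetilde E$, we may further select such a $w$ with $\Haus^2\bigl(\widetilde E\cap(w+W_{a,b})\bigr)>0$.

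Finally, I apply the strong fractional Frobenius step. On the plane $w+W_{a,b}\simeq\R^2$ define the continuous $1$-form
\[
g\;:=\;\partial_a\varphi_p\,dx_a+\partial_b\varphi_p\,dx_b,
\]
which is continuous since $\varphi$ is $C^1$. As $g$ is the restriction of the exact form $d\varphi_p$, one has $dg=0$ in the distributional sense, hence $dg\in L^1_{\mathrm{loc}}$. The tangency identity $D\varphi=M$ on $\widetilde E$ gives $\mathfrak m-g=0$ on $\widetilde E\cap(w+W_{a,b})$. The form $\mathfrak m-g$ is continuous, and $d(\mathfrak m-g)=d\mathfrak m\in C^0\subset L^1_{\mathrm{loc}}$. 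Since $s>1/2$ and $\mathbb 1_{\widetilde E\cap(w+W_{a,b})}\in W^{s,1}$, Proposition \ref{frobeniustrong} applied to $\mathfrak m-g$ yields $d\mathfrak m=0$ $\Leb^2$-almost everywhere on $\widetilde E\cap(w+W_{a,b})$. This contradicts the pointwise non-vanishing of $d\mathfrak m$ near $0$ combined with $\Haus^2(\widetilde E\cap(w+W_{a,b}))>0$, completing the proof.

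The substantive work is done by Proposition \ref{frobeniustrong}; the only real step here is the bookkeeping of the chart reduction and the slicing selection, so the main obstacle (already resolved in Section 3) was establishing the Stokes-type locality of $d$ on sets with $s$-fractional boundary for $s>1/2$. The absence of any regularity requirement on $\varphi$ beyond $C^1$ is exactly what makes this theorem strictly stronger than Theorem \ref{main:incorpus} in the high-regularity regime of the boundary of $E$.
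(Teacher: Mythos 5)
Your proposal is correct and follows exactly the approach the paper itself indicates: the paper's proof is literally a one-line reference, stating that one should reproduce the proof of Theorem \ref{main:incorpus} while replacing Proposition \ref{prop:delicata2} by Proposition \ref{frobeniustrong}, and your expansion faithfully carries that out (chart reduction via Proposition \ref{equivalenza}, the non-involutive direction via Proposition \ref{Non-inv}, 2-plane slicing via Proposition \ref{restrictionW1,p}, and the locality step via Proposition \ref{frobeniustrong}). Your observation that continuity of $g$ suffices — so that the $Y^{1+\alpha,q}$ hypothesis can be dropped to mere $C^1$-rectifiability — is precisely the point the paper intends by the substitution.
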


\begin{proof}
    The proof follows that of Theorem \ref{main:incorpusv2} substituting Theorem \ref{prop:delicata2} with Theorem \ref{frobeniustrong}. 
\end{proof}

The following result shows that Theorem \ref{main:incorpus} is sharp, in the sense that the regimes of tradeoff between regularity of the surface and of the tangency set are optimal and not possible to improve. Such optimality is an immediate consequence of our constructions in Section \ref{sectioncounterexamples}. 

\begin{theorem}
    Suppose $V$ is a $k$-dimensional distribution of class $C^1$ and let $ q \in [1, \infty]$, $s, \alpha \in (0,1)$ be such that $s\in [0,1/2)$ and
\[
\alpha < 1 - \left(2 - \frac{1}{q} \right) s.
\]
Then, there exists an embedded $k$-dimensional submanifold $S$ of class $Y^{1+\alpha,q}$ and a Borel set $E$ with $\mathbb{1}_E\in W^{s,1}(S)$ such that 
$$E\subseteq \tau(S,V) \qquad\text{and}\qquad \Haus^k(E)>0.$$
\end{theorem}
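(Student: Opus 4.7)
The plan is to deduce this sharpness statement directly from the Lusin-type Theorem~\ref{lusinregSobolev} together with the tangency dictionary developed in \S\ref{totangencytoDPE}. Since the claim is local, I would first reduce to a construction near a fixed point $p_0\in\R^n$. After an Euclidean rotation we may assume $V(p_0)=W:=\Span(e_1,\dots,e_k)$; the $C^1$ regularity of $V$ then yields a neighborhood $\Omega\times\Omega'\subset W\oplus W^\perp$ of $(0,0)$ and a $C^1$ matrix field $M:\Omega\times\Omega'\to \R^{(n-k)\times k}$ such that $V(x,y)$ is the graph of $M(x,y)$ and such that for every $C^1$ function $\varphi:\Omega\to\Omega'$ one has
\[
(x,\varphi(x))\in\tau(\gr(\varphi),V) \iff D\varphi(x)=M(x,\varphi(x)),
\]
exactly as in \eqref{eq:2}.

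Next, I would apply Theorem~\ref{lusinregSobolev} to the locally Lipschitz datum $F(x,y):=M(x,y)$, with $\eta>0$ small enough that $u(\Omega)\Subset\Omega'$ for the resulting $u$ and with any fixed $\varepsilon\in(0,1)$. The hypotheses $s\in[0,1/2)$ and $\alpha<1-(2-1/q)s$ of our theorem fit precisely the hypotheses of Theorem~\ref{lusinregSobolev}, whose conclusion provides a compact set $\mathfrak{C}\subset\Omega$ and a function $u\in C^1_c(\Omega)\cap W^{1,q}(\Omega)$ with $Du\in W^{\alpha,q}(\Omega)$, $\mathbb 1_{\mathfrak C}\in W^{s,1}(\Omega)$, $\Leb^k(\mathfrak{C})\ge(1-\varepsilon)\Leb^k(\Omega)>0$, and satisfying the pointwise identity $Du(x)=M(x,u(x))$ for every $x\in\mathfrak{C}$.

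Finally, I would set $S:=\gr(u)$ and $E:=\{(x,u(x)):x\in\mathfrak{C}\}$. Then $S$ is an embedded $k$-dimensional $C^1$ submanifold which, being covered by the single graph chart of $u$ with $Du\in W^{\alpha,q}$, is of class $Y^{1+\alpha,q}$ in the sense of Definition~\ref{Yreg}. The identity $Du(x)=M(x,u(x))$ on $\mathfrak{C}$ translates through the dictionary above into $\Tan(S,(x,u(x)))=V((x,u(x)))$ for every $x\in\mathfrak{C}$, so $E\subseteq\tau(S,V)$. The regularity $\mathbb 1_E\in W^{s,1}(S)$ follows from $\mathbb 1_{\mathfrak C}\in W^{s,1}(\Omega)$ and Proposition~\ref{equivalenza} applied to the Lipschitz graph parameterization $x\mapsto(x,u(x))$, while $\Haus^k(E)\ge\Leb^k(\mathfrak{C})>0$ by the area formula.

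I do not foresee any serious obstacle in this step: the entire analytic weight of the theorem is carried by Theorem~\ref{lusinregSobolev}, whose inductive construction in \S\ref{section:construction:lusin} is engineered precisely so that the critical threshold $\alpha=1-(2-1/q)s$ is the exact boundary at which the Besov estimates on $Du$ cease to be summable. The only point demanding some attention is the passage from the flat domain $\Omega\subset\R^k$ to the curved surface $S$, but this is handled transparently by Proposition~\ref{equivalenza}, which guarantees that fractional Sobolev regularity is preserved under Lipschitz graph maps, so that the sharp exponent relation survives unchanged on $S$.
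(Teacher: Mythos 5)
Your proposal is correct and takes essentially the same approach as the paper, whose own proof is just the one-line remark that Proposition~\ref{equivalenza} together with Theorem~\ref{lusinregSobolev} concludes the argument. You have merely spelled out the details the paper leaves implicit: applying the Lusin-type theorem to $F(x,y)=M(x,y)$, taking $S=\gr(u)$ and $E=\{(x,u(x)):x\in\mathfrak C\}$, using the $s<1/2$ clause of Theorem~\ref{lusinregSobolev} to get the everywhere identity $Du=M(\cdot,u)$ on $\mathfrak C$, and transferring the fractional regularity of $\mathbb 1_{\mathfrak C}$ to $\mathbb 1_E$ via Proposition~\ref{equivalenza}.
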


\begin{proof}
 Proposition \ref{equivalenza} together with  Theorem \ref{lusinregSobolev} directly concludes the proof.
\end{proof}

    %
    %
    %
\printbibliography

    %
    %
    %
    %
\vskip .5 cm
{\parindent = 0 pt\footnotesize
G.A.
\par
\smallskip
Dipartimento di Matematica,
Universit\`a di Pisa
\par
largo Pontecorvo 5,
56127 Pisa,
Italy
\par
\smallskip
e-mail: \texttt{giovanni.alberti@unipi.it}

\bigskip
A.Ma.
\par
\smallskip

Dipartimento di Matematica ``Tullio Levi-Civita'',
Universit\`a di Padova
\par
via Trieste 63, 
35121 Padova,
Italy
\par
\smallskip
e-mail: \texttt{annalisa.massaccesi@unipd.it}

\bigskip
A.Me.
\par
\smallskip
Universidad del Pais Vasco (UPV/EHU),
and\\
Ikerbasque, Basque Foundation for Science, Bilbao, Spain.
\par
Barrio Sarriena S/N 48940 Leioa, Spain.
\par
\smallskip
e-mail: \texttt{andrea.merlo@ehu.eus}
\par
}

\end{document}